\documentclass[a4paper,11pt,reqno]{amsart}
\pdfoutput=1
\usepackage{epsfig,url,paralist}
\usepackage{amssymb}
\usepackage[normalem]{ulem}
\usepackage{fullpage,dynkin-diagrams}
\usepackage{amsmath}
\usepackage{tikz}
\usepackage{relsize}
\usepackage{fullpage}
\usepackage{hyperref,verbatim}
\usepackage[capitalize]{cleveref}
\usepackage{graphicx}
\usepackage{setspace,multirow}
\usepackage{enumitem,lineno}
\setlist{nolistsep}
\usepackage{lscape}
\usepackage{calrsfs}            
\usepackage{color}
\usepackage{colortbl}
\usepackage{xcolor}
\usepackage{wrapfig}

\usetikzlibrary{calc, intersections}

\numberwithin{equation}{section}

\newtheorem{theorem}{Theorem}[section]

\newtheorem{theorem*}{Main Result}
\newtheorem{lem}[theorem]{Lemma}
\newtheorem{coro}[theorem]{Corollary}
\newtheorem{cor*}[theorem*]{Corollary}
\newtheorem{observation}[theorem]{Observation}
\newtheorem{prop}[theorem]{Proposition}
\theoremstyle{definition}
\newtheorem{defn}[theorem]{Definition}

\newtheorem{remark}[theorem]{Remark}
\newtheorem{exm}[theorem]{Example}
\numberwithin{theorem}{section}
\usepackage{fancyhdr}
\setlength{\parskip} {\smallskipamount}
\parindent0pt
\def\<{\langle}
\def\>{\rangle}
\newcommand{\Res}{\mathsf{Res}}

\newcommand{\pperp}{\perp\hspace{-0.15cm}\perp}

\newcommand{\per}{\text{\,\footnotesize$\overline{\land}$\,}}
\newcommand{\proj}{\mathsf{proj}}

\newcommand{\Z}{\mathbb{Z}}

\newcommand{\cP}{\mathcal{P}}

\newcommand{\cL}{\mathcal{L}}

\usepackage{lineno}
%\linenumbers

\keywords{Moufang spherical buildings, root elations, projectivities}
\subjclass{20E42}
\begin{document}
\title{The Moufang condition and root automorphisms for spherical buildings of rank 3}
\author{Sira Busch}
\address{Sira Busch\\ Department of Mathematics, M\"unster University, Germany}
\email{s\_busc16@uni-muenster.de}
\thanks{The author is funded by the Claussen-Simon-Stiftung and by the Deutsche Forschungsgemeinschaft (DFG, German Research Foundation) under Germany's Excellence Strategy EXC 2044 --390685587, Mathematics Münster: Dynamics--Geometry--Structure. This work is part of the PhD project of the author.}
\maketitle

\vspace{-1cm}

\begin{abstract}
We give direct, geometric constructions for nontrivial root elations for rank $2$ residues of higher rank buildings $\Delta$ of type $\mathsf{B_n}, \mathsf{C_n}$ and $\mathsf{H_m}$ for $n \in \mathbb{N}$ and $m \in \{3,4\}$. We show that we can extend these to the ambient building in the case that $\Delta$ has type $\mathsf{B_n}$ or $\mathsf{C_n}$. With that, we obtain a different proof for the fact that buildings of type $\mathsf{B_n}$ and $\mathsf{C_n}$ are Moufang. This geometric approach enables us to gain more insight into the root groups associated to these buildings and we obtain new results; Namely, that certain root elations generically fix more points than we previously knew and that every root elation in each point residual can be written as an even self-projectivity. Concerning $\mathsf{H_m}$, we will be able to see in a novel way why thick, spherical buildings of type $\mathsf{H_m}$ cannot exist. Altogther, this provides an alternative proof for the fact that all thick, irreducible, spherical buildings $\Delta$ of rank 3 have the Moufang property.
\end{abstract}

\setcounter{tocdepth}{2}
\tableofcontents

%%%%%%%%%%%%%%%%%%%%%%%%%%%%%%%%%%%%%%%%%%%%%%%%%%%%%%%%%%%%%%%%%%%%%%%%%
%%%%%%%%%%%%%%%%%%%%%%%%%%%%%%%%%%%%%%%%%%%%%%%%%%%%%%%%%%%%%%%%%%%%%%%%%
%%%%%%%%%%%%%%%%%%%%%%%%%%%%%%%%%%%%%%%%%%%%%%%%%%%%%%%%%%%%%%%%%%%%%%%%%
%%%%%%%%%%%%%%%%%%%%%%%%%%%%%%%%%%%%%%%%%%%%%%%%%%%%%%%%%%%%%%%%%%%%%%%%%
%%%%%%%%%%%%%%%%%%%%%%%%%%%%%%%%%%%%%%%%%%%%%%%%%%%%%%%%%%%%%%%%%%%%%%%%%
%%%%%%%%%%%%%%%%%%%%%%%%%%%%%%%%%%%%%%%%%%%%%%%%%%%%%%%%%%%%%%%%%%%%%%%%%

\section{Introduction}

In his lecture notes from 1974, Jacques Tits showed that all thick, irreducible, spherical buildings $\Delta$ of rank at least three have the Moufang property. This groundbreaking result laid the basis for classifyng all such buildings, which is sometimes described as one of Tits's greatest achievements (see \cite[section 0.5]{Abr-Bro:08}) or one of the great accomplishments of 20th century mathematics in the literature (see \cite[preface]{Weiss:03}). Showing the Moufang property was done by proving a theorem about the extensions of isometries (see \cite[Theorem 4.1.2 and Proposition 4.16 ]{Tits:74}), finding an isomorphism $\varphi$ from an apartment $\Sigma$ to an apartment $\Sigma'$, such that $\Sigma$ and $\Sigma'$ intersect in a root $\alpha$ and $\varphi$ acts as the identity on $\alpha$, and then extending it to an automorphism of the whole building $\Delta$. This proof is general and does not ask for the type of building. However, Jacques Tits himself noted that his extension theorem is rather technical (see \cite[page 7]{Tits:74}). Hence, considering the type of the building and using its specific geometry to find direct, geometric constructions for all root elations, can help illuminate why Tits's famous result holds. The author is only aware of the proof introduced by Tits, which relies on the extension theorem, and is unaware of any other proof.

In this article, we give said constructions for rank $2$ residues of higher rank buildings in \ref{GQFirst} and \ref{GQSecond} for types $\mathsf{B_n}$ and $ \mathsf{C_n}$ and in \ref{elationsH3} for type $\mathsf{H_3}$. We show in \ref{PolarSpaceFirst} and \ref{PolarSpaceSecond} that our automorphisms extend to the ambient building for types $\mathsf{B_3}$ and $\mathsf{C_3}$, describe our new results about the fixpoint structure of these root elations in \ref{fixpointstructureelations}, and our new results about the connection between root elations and projectivities in \ref{projectivities}. We give a new geometric proof for the fact that no thick, spherical buildings of types $\mathsf{H_3}$ and $\mathsf{H_4}$ exist in \ref{nonexist}, which is a direct corollary of \ref{elationsH3}. We note that our constructions for elations also extend to the ambient buildings in the cases $\mathsf{B_n}$ and $\mathsf{C_n}$ for $n \geq 3$.

The case that $\Delta$ has rank $3$ is special, because it is the lowest rank for which the Moufang condition is always satisfied and it usually gets easier to show the Moufang condition when the rank gets higher. It is not generally true that rank $2$ buildings are Moufang. However, if we consider buildings of type $\mathsf{A_{n \geq 3}}$ as an example, it is clearly true that its rank $2$ subbuildings are always Moufang: The Lie incidence geometry of a building of type $\mathsf{A_n}$ is the geometry of a projective space. Projective planes correspond to the buildings of type $\mathsf{A_2}$ and have been studied extensively by Ruth Moufang, after whom the Moufang condition was named. If we look at buildings of types $\mathsf{B_n}$, their geometries correspond to those of polar spaces. Showing that projective planes -- which correspond to the residues of type $\mathsf{A_2}$ -- of a polar space of rank $3$ are Moufang, requires some work (see \cite[Proposition 7.11]{Tits:74}). Contrarily, if one considers polar spaces of rank greater than $3$, it follows immediately that its projective planes are Moufang, since they are always contained in higher dimensional projective subspaces. Therefore, we will mostly be concerned with the rank $3$ case in this article. We will see that buildings of type $\mathsf{B_2}$ and $\mathsf{C_2}$ inside buildings of rank $3$ of the same type are always Moufang.

Considering all spherical buildings of rank $3$, we observe that we have to examine buildings of type $\mathsf{A_3}$, $\mathsf{B_3}, \mathsf{C_3}$, $\mathsf{D_3}$ and $\mathsf{H_3}$. Note that $\mathsf{A_3}=\mathsf{D_3}$ and $\mathsf{B_n} = \mathsf{C_n}$ in our context. As said before, buildings of type $\mathsf{A_3}$ have already been studied extensively and thus will be disregarded in this article.\footnote{To see that projective spaces corresponding to thick, irreducible, spherical buildings of type $\mathsf{A_n}$ are Moufang, see for example \cite[Chapter 6.3]{Bue-Coh:13}.} After introducing the concepts that are relevant for this article, we will first cover the case of buildings of type $\mathsf{B_n}$ and after that the case of buildings of type $\mathsf{H_m}$ ($m \in \{3,4\}$), which have a much more complicated geometry. In both cases we will first construct root elations for rank $2$ residues. In the later case we will find a new proof for the fact that thick, spherical buildings of type $\mathsf{H_m}$ do not exist. In the first case we will proceed to show that these constructions extend to the whole space. This also works, if the rank is $\geq3$. Thus, this article lies a basis for developing a new proof for Tits's theorem. As said before, we will be able to state new results about the fixpoint structure of root elations for buildings of type $\mathsf{B_3}$; namely that some of them generically fix more points than we previously knew. We will see that we can write every root elation of a rank $2$ residue of a building of type $\mathsf{B_3}$ and $\mathsf{H_3}$ as an even self-projectivity of length $4$. In other words: For residues of rank $2$ we will show that their little projective group is contained in their special projectivity group.\footnote{This result, together with \cite[Proposition 2.3]{Kna:88}, was a motivation for \cite[Theorem A]{Bus-Sch-Mal:24}.}

It is worth mentioning that a simplified version of Tits's extension theorem exists for polar spaces (see \cite[Theorem 8.5.5]{Bue-Coh:13}). However, this does not give us a direct construction for root elations and it assumes that we already know that a certain isomorphism exists (\cite[page 400, line 2]{Bue-Coh:13}).

%%%%%%%%%%%%%%%%%%%%%%%%%%%%%%%%%%%%%%%%%%%%%%%%%%%%%%%%%%%%%%%%%%%%%%%%%
%%%%%%%%%%%%%%%%%%%%%%%%%%%%%%%%%%%%%%%%%%%%%%%%%%%%%%%%%%%%%%%%%%%%%%%%%
%%%%%%%%%%%%%%%%%%%%%%%%%%%%%%%%%%%%%%%%%%%%%%%%%%%%%%%%%%%%%%%%%%%%%%%%%
%%%%%%%%%%%%%%%%%%%%%%%%%%%%%%%%%%%%%%%%%%%%%%%%%%%%%%%%%%%%%%%%%%%%%%%%%
%%%%%%%%%%%%%%%%%%%%%%%%%%%%%%%%%%%%%%%%%%%%%%%%%%%%%%%%%%%%%%%%%%%%%%%%%
%%%%%%%%%%%%%%%%%%%%%%%%%%%%%%%%%%%%%%%%%%%%%%%%%%%%%%%%%%%%%%%%%%%%%%%%%

\subsection{Buildings} 

We will define a \emph{building} in the same way as Jacques Tits in \cite{Tits:74}. A \emph{building} is a thick, simplicial chamber complex $\Delta$, such that \emph{(i)} $\Delta$ is the union of thin chamber subcomplexes and we call these \emph{apartments}, \emph{(ii)} for any two simplices, there is an apartment containing both of them and \emph{(iii)} every two apartments are isomorphic through an isomorphism fixing every vertex in their intersection. We call maximal simplices \emph{chambers} and codimension $1$ simplices \emph{panels}. Note that, with this definition, every building is automatically \emph{thick}, meaning that every panel is a face of at least three chambers. The apartments of a building are Coxeter complexes (\cite[Theorem 3.7]{Tits:74}). Given a building $\Delta$, the Coxeter complexes that form the set of apartments of $\Delta$, all stem from the same Coxeter system and thus have the same Coxeter diagram (\cite[Proposition 3.15]{Tits:74}). This justifies defining the \emph{diagram of a building $\Delta$} as the common Coxeter diagram of the Coxeter complexes. A building is called \emph{irreducible} if its diagram is connected. A Coxeter diagram is called \emph{spherical}, if the corresponding Coxeter group is finite and a building is called \emph{spherical}, if its diagram is spherical. Let $I$ be the vertex set of the diagram of a spherical building $\Delta$. Then we call the cardinality of $I$ the \emph{rank of $\Delta$}. We say that a spherical building $\Delta$ of rank $n$ has \emph{type $X_n$}, if $X_n$ is the type of its Coxeter diagram. 

The reader not familiar with buildings can find more information in \cite{Abr-Bro:08}, \cite{Tits:74} and \cite{Weiss:03} among others. It will not be essential for this article to have a deep understanding about general building theory.

For every spherical Coxeter diagram of type $X_n$, one can define \emph{Lie incidence geometries of type $X_n$} and get a correspondence between buildings of type $X_n$ and Lie incidence geometries of type $X_n$. We refer to \cite{Bue-Coh:13}, \cite[Section 9.5]{Shu:11}, \cite{Tits:74}, \cite[Section 9.3]{Mal:24} and in particular to \cite[Table 4.2]{{Bue-Coh:13}} with regards to the following table.

\begin{center}
\begin{tabular}{ c | c }
Type of $\Delta$ & Lie incidence geometry \\ \hline
$\mathsf{A_n}$ & Projective space \\  
$\mathsf{B_n}, \mathsf{C_n}$ & Polar space \\
$\mathsf{D_n}$ & Top-thin polar space \\
$\mathsf{E_6}, \mathsf{E_7}, \mathsf{E_8}$ & Parapolar space \\
$\mathsf{F_4}$ & Metasymplectic space \\
$\mathsf{I_j}$ & Generalised polygon \\
$\mathsf{H_3}$ & Generalised icosahedron \\
$\mathsf{H_4}$ & Generalised hypericosahedron
\end{tabular}
\end{center}

Since, in this article, we want to work with the Lie incidence geometries associated to buildings of type $\mathsf{B_n}, \mathsf{C_n}$ and $\mathsf{H_3}$, we will, in the following, give some explanations, definitions and state some properties about polar spaces and Lie incidence geometries of type $\mathsf{H_3}$. Note that showing the \emph{Moufang property} for buildings of types $\mathsf{B_n}$ and $\mathsf{C_n}$ is equivalent to showing the \emph{Moufang property} for the corresponding polar spaces. Hence, we will only define the \emph{Moufang property} properly in this context. For a rough impression and an outlook, we will say: \emph{A spherical building $\Delta$ of rank at least two satisfies the Moufang property, if, for each half-apartment $\alpha$, the group generated by all automorphisms of $\Delta$, which act trivially on every panel of $\Delta$, which contains two chambers in $\alpha$, acts transitively on the set of all apartments of $\Delta$ containing $\alpha$} (compare \cite[Definition 11.1 and 11.2]{Weiss:03}).

%%%%%%%%%%%%%%%%%%%%%%%%%%%%%%%%%%%%%%%%%%%%%%%%%%%%%%%%%%%%%%%%%%%%%%%%%
%%%%%%%%%%%%%%%%%%%%%%%%%%%%%%%%%%%%%%%%%%%%%%%%%%%%%%%%%%%%%%%%%%%%%%%%%
%%%%%%%%%%%%%%%%%%%%%%%%%%%%%%%%%%%%%%%%%%%%%%%%%%%%%%%%%%%%%%%%%%%%%%%%%
%%%%%%%%%%%%%%%%%%%%%%%%%%%%%%%%%%%%%%%%%%%%%%%%%%%%%%%%%%%%%%%%%%%%%%%%%
%%%%%%%%%%%%%%%%%%%%%%%%%%%%%%%%%%%%%%%%%%%%%%%%%%%%%%%%%%%%%%%%%%%%%%%%%
%%%%%%%%%%%%%%%%%%%%%%%%%%%%%%%%%%%%%%%%%%%%%%%%%%%%%%%%%%%%%%%%%%%%%%%%%

\subsection{Point-Line Geometries}
We start with the most basic and essential definitions.
\begin{defn}
A \textit{point-line geometry} is a pair $\Delta=(\cP,\cL)$, where $\cP$ is set and $\cL$ is a set of subsets of $\cP$. The elements of $\cP$ are called \textit{points} and the elements of $\cL$ are called \textit{lines}. If $p \in \cP$ and $L \in \cL$ with $p\in L$, we say that the point $p$ \emph{lies on} the line $L$, and the line $L$ \emph{contains} the point $p$, or \emph{goes through} $p$.  If two points $p$ and $q$ lie on a common line, they are called \textit{collinear}, denoted $p \perp q$.  If they are not contained in a common line, we say that they are \textit{opposite}, denoted $p \equiv q$. For any point $p$ and any subset $P \subseteq \cP$, we define \[p^\perp := \{q \in \cP\mid q \perp p\} \text{ and } P^\perp := \bigcap_{p \in P} p^\perp.\]
A\textit{ partial linear space} is a point-line geometry in which every line contains at least three points, and where there is a unique line through every pair of distinct collinear points $p$ and $q$, which is then denoted with $pq$. 
\end{defn}
		
\begin{exm}\label{example}
Let $V$ be a vector space of dimension at least 3. Let $\cP$ be the set of $1$-dimensional subspaces of $V$ and let $\cL$ be the set of $2$-dimensional subspaces of $V$. We can regard $\cL$ as $\cP \times \cP$. Then $(\cP,\cL)$ is called a \emph{projective space (of dimension $\dim V-1$). }  
\end{exm}

\begin{defn}
Let $\Delta = (\cP,\cL)$ be a partial linear space.  
\begin{enumerate}[label=(\roman*)]
%				\item A \textit{path of length $n$} in $\Delta$ from point $x$ to point $y$ is a sequence $(x= p_0,p_1,\dots, p_{n-1},p_n=y)$ of points of $\Delta$ such that $p_{i-1} \perp p_{i}$ for all $i \in \{1,\dots,n-1\}$. It is called a \textit{geodesic} when there exist no paths of $\Delta$ from $x$ to $y$ of length strictly smaller than $n$, in which case \emph{the distance} between $x$ and $y$ in $\Delta$ is defined to be $n$, notation $d_\Delta(x,y) = n$. 
%				\item The partial linear space $\Delta$ is called \emph{connected} when for any two points $x$ and $y$, there is a path (of finite length) from $x$ to $y$. If moreover the set $\{d_\Delta(x,y)\mid x,y \in \cP\}$ has a supremum in $\NN$, this supremum is called the \textit{diameter} of $\Delta$. 
\item A subset $S$ of $\cP$ is called a \textit{subspace} of $\Delta$, if every line $L\in\cL$ that contains at least two points of $S$, is contained in $S$. A subspace that intersects every line in at least a point is called a \textit{hyperplane}. A hyperplane is called \emph{proper}, if it does not consist of the whole point set. %A subspace is called \textit{convex} if it contains all points on every geodesic that connects any two points in $S$. 
We usually regard subspaces of $\Delta$ as subgeometries of $\Delta$ in the canonical way.
\item A subspace $S$, in which all points are collinear, or equivalently, for which $S \subseteq S^\perp$, is called a \textit{singular subspace}. If, moreover, $S$ is  not contained in any other singular subspace, it is called a \textit{maximal singular subspace}. A singular subspace is called \textit{projective} if, as a subgeometry,  it is a projective space. %Note that every singular subspace is convex. 
				
\item For a subset $P$ of $\cP$, the \textit{subspace generated by $P$} is denoted $\<P\>_\Delta = \<P\>$ and is defined to be the intersection of all subspaces containing $P$. %The \textit{convex closure of $P$} is denoted $\ld P\rd_\Delta$ and is defined to be the intersection of all convex subspaces that contain $P$.
A subspace generated by three mutually collinear points, not on a common line, is called a \textit{plane}. (Note that, in general, this is not necessarily a singular subspace. However, in the cases we will deal with, subspaces generated by pairwise collinear points are singular; in particular planes will be singular subspaces.) 
\end{enumerate}
\end{defn}

\begin{defn}
Given a point-line geometry, we can construct a graph by drawing a vertex for every point $p \in \mathcal{P}$ and by drawing an edge between two points $p$ and $q$, if $p$ and $q$ are collinear. This graph is called the \emph{point graph} or \emph{collinearity graph}.
\end{defn}

\begin{defn} Let $n \geq 1$  be a natural number. A \emph{generalised $n$-gon} is a partial linear space $\Gamma = (\cP, \cL)$, such that the following axioms are satisfied (compare \cite[Definition 1.3.1]{Mal:98}).
\begin{enumerate}[label=(\roman*)]
\item $\Gamma$ contains no regular $k$-gon as a subgeometry, for $2 \leq k < n$.
\item Any two elements of $\cP \cup \cL$ are contained in some regular $n$-gon in $\Gamma$ and these are also called the \emph{apartments of $\Gamma$}.
\item There exists a regular $(n + 1)$-gon as a subgeometry in $\Gamma$.
\end{enumerate}
\end{defn}

%%%%%%%%%%%%%%%%%%%%%%%%%%%%%%%%%%%%%%%%%%%%%%%%%%%%%%%%%%%%%%%%%%%%%%%%%
%%%%%%%%%%%%%%%%%%%%%%%%%%%%%%%%%%%%%%%%%%%%%%%%%%%%%%%%%%%%%%%%%%%%%%%%%
%%%%%%%%%%%%%%%%%%%%%%%%%%%%%%%%%%%%%%%%%%%%%%%%%%%%%%%%%%%%%%%%%%%%%%%%%
%%%%%%%%%%%%%%%%%%%%%%%%%%%%%%%%%%%%%%%%%%%%%%%%%%%%%%%%%%%%%%%%%%%%%%%%%
%%%%%%%%%%%%%%%%%%%%%%%%%%%%%%%%%%%%%%%%%%%%%%%%%%%%%%%%%%%%%%%%%%%%%%%%%
%%%%%%%%%%%%%%%%%%%%%%%%%%%%%%%%%%%%%%%%%%%%%%%%%%%%%%%%%%%%%%%%%%%%%%%%%

\subsection{Lie Incidence Geometries of Type $\mathsf{B_n}, \mathsf{C_n}$ and $\mathsf{D_n}$} 

As previously mentioned, Lie incidence geometries of types $\mathsf{B_n}, \mathsf{C_n}$ and $\mathsf{D_n}$ are polar spaces. Thus, we recall the definition of a polar space and gather some basic properties. We will take the viewpoint of Buekenhout--Shult \cite{Bue-Shu:74} and note that all results in this section are well-known and were gathered by Hendrik Van Maldeghem in the book \emph{Polar Spaces} \cite{Mal:24}. 

\begin{defn}
A \textit{polar space} is a point-line geometry, in which every line contains at least three points, and for every point $x$, the set $x^\perp$ is a proper geometric hyperplane. 
\end{defn}	

If two points $p$ and $q$ in a polar space are not collinear, then we call them \emph{opposite}. We will only consider polar spaces of finite rank, that is, such that maximal singular subspaces are generated by a finite number of points. The minimal such number is called the \emph{rank of the polar space}. A \emph{submaximal} singular subspace is a hyperplane of a maximal singular subspace. One can show that all singular subspaces are either empty, points, lines or  projective spaces of finite dimension (see \cite[Theorem 7.3.6 and Lemma 7.3.8]{Shu:11}). Consequently, a polar space is a partial linear space. A polar space is called \emph{top-thin}, if every submaximal singular subspace is contained in exactly two maximal singular subspaces. A polar space is either top-thin or each submaximal singular subspace is contained in at least three maximal singular subspaces (see \cite[Theorem~1.7.1]{Mal:24}). In the latter case, the polar space is called \emph{thick}. 

\begin{remark} 
Buildings of type $\mathsf{B_n}$ and $\mathsf{C_n}$ correspond to thick polar spaces. Buildings of type $\mathsf{D_n}$ correspond to top-thin polar spaces (see \cite[Section 6.3 \& Section 6.4]{Mal:24}). 
\end{remark}

Commonly, when people refer to polar spaces, they mean thick polar spaces. In this article we will do the same. Top-thin polar spaces will not play a role anymore in the following.

\begin{remark} 
Polar spaces of rank $2$ are also called generalised quadrangles. 
\end{remark}

\subsubsection{Apartments in polar spaces and the Moufang property}
	
\begin{lem}
Let $\Delta$ be a polar space of rank $n$. Then we can find $2n$ points $p_{-n},p_{-n+1},\ldots,p_{-1},$ $p_1,p_2,\ldots,p_n$ such that $p_i\perp p_j$ if, and only if, $i+j\neq 0$, for all $i,j\in\{-n,-n+1,\ldots,-1,1,\ldots,n\}$. 
\end{lem}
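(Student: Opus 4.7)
The plan is to proceed by induction on the rank $n$. In the base case $n=1$, a rank-$1$ polar space has no lines, so the statement reduces to exhibiting two distinct, mutually opposite points; this is possible because $x^\perp$ is required to be a \emph{proper} hyperplane for every point $x$.

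For the inductive step I would take a polar space $\Delta$ of rank $n \geq 2$, pick an arbitrary point $p_1 \in \cP$, and use that $p_1^\perp$ is a proper hyperplane to find $p_{-1} \in \cP$ with $p_1 \not\perp p_{-1}$. I would then pass to the point-line geometry $\Delta' = (\cP', \cL')$ whose point set is $\cP' := p_1^\perp \cap p_{-1}^\perp = \{p_1, p_{-1}\}^\perp$ and whose line set consists of those lines of $\Delta$ entirely contained in $\cP'$, and invoke the standard \emph{residue} or \emph{reduction} fact that $\Delta'$ is itself a polar space, now of rank $n-1$. Applying the inductive hypothesis to $\Delta'$ then furnishes points $p_{\pm 2}, \ldots, p_{\pm n}$ in $\cP'$ with $p_i \perp p_j$ in $\Delta'$ if and only if $i+j \neq 0$ for all indices $i, j$ with $2 \leq |i|, |j| \leq n$; since lines of $\Delta'$ are lines of $\Delta$, perpendicularity in the two geometries agrees on $\cP'$.

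To conclude, I would verify the collinearity conditions for all $\binom{2n}{2}$ pairs in the combined list. The relations among $p_{\pm 2}, \ldots, p_{\pm n}$ are supplied by the induction; the relations $p_{\pm 1} \perp p_{\pm j}$ for $2 \leq |j| \leq n$ hold by construction, since every such $p_j$ lies in $\{p_1, p_{-1}\}^\perp$; and $p_1 \not\perp p_{-1}$ is the defining property of $p_{-1}$. Together these cover exactly the pairs with $i+j \neq 0$, which is what the lemma demands.

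The main obstacle is justifying the reduction lemma that $(\{p_1, p_{-1}\}^\perp, \cL')$ is itself a polar space of rank $n-1$. Concretely, one has to verify the Buekenhout--Shult one-or-all axiom for the induced geometry, check that the new lines are still thick, and argue that the length of a maximal chain of singular subspaces has dropped by exactly one. In a self-contained treatment this would form the bulk of the argument; in the present setting it is a standard tool within the Buekenhout--Shult framework adopted above and may safely be cited from \cite{Mal:24}.
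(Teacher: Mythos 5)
Your argument is correct and is essentially the construction the paper relies on: the paper's ``proof'' is just a citation of \cite[Construction 1.5.3]{Mal:24}, which builds a polar frame by exactly this induction (choose a point, choose a non-collinear point, recurse inside their common perp). The reduction fact you flag as the main obstacle --- that $\{p_1,p_{-1}\}^\perp$ with the induced lines is a polar space of rank $n-1$ --- is indeed the only real content, and it is available in the same reference (and quoted elsewhere in the paper), so citing it is legitimate.
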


\begin{proof}
This is \cite[Construction 1.5.3]{Mal:24}. 
\end{proof}

\begin{defn}
The set  $\{p_{-n},p_{-n+1},\ldots,p_{-1},p_1,p_2,\ldots,p_n\}$ is called a \emph{polar frame}. An \emph{apartment} of a polar space $\Delta$ is the set of all singular subspaces spanned by the points of a polar frame. Hence, the apartments of a generalised quadrangle are regular quadrangles, the apartments of a polar space of rank $3$ are regular octahedra and the apartments for polar spaces of higher rank are hyperoctahedra. 

Given an apartment $\mathcal{A}$ spanned by a polar frame $\{p_{-n},p_{-n+1},\ldots,p_{-1},p_1,p_2,\ldots,p_n\}$, we can obtain a \emph{half-apartment} $\alpha$ in two different ways:
\begin{enumerate}[label=(\roman*)]
\item by removing two collinear points $p_{i}$ and $p_{j}$ of the polar frame, all maximal singular subspaces $A$ of $\mathcal{A}$, which contain $p_{i}$ and $p_{j}$ and all singular subspaces which are contained in them, which either contain $p_i$ or $p_j$, or
\item by removing one point $p_{i}$ of the polar frame and all singular subspaces of $\mathcal{A}$, which contain $p_{i}$.
\end{enumerate}

Half-apartments are also called \emph{roots}. We will say that a root is \emph{of the first kind}, if we remove two points and \emph{of the second kind}, if we remove one point of the corresponding polar frame. The \emph{inside $\alpha^{+}$ of a root $\alpha$ of the first kind} is obtained, when we also remove all maximal singular subspaces which contain either $p_{i}$ or $p_{j}$. The \emph{inside $\alpha^{+}$ of a root $\alpha$ of the second kind} is obtained, when we also remove all submaximal singular subspaces, which were contained in one of the removed maximal singular subspaces. 
\end{defn}

\begin{exm} We will consider apartments of a polar space of rank $3$ and mark everything contained in a root in both cases in gray.

\begin{center}
\begin{tikzpicture}
\begin{scope}[xscale=1.35, yscale=1.3, xshift=2.5cm]
\coordinate (q) at (1, 0, 0);  % Right vertex
\coordinate (b) at (0, 1, 0);  % Upper vertex
\coordinate (n) at (-1, 0, 0); % Left vertex
\coordinate (p) at (0, -1, 0); % Bottom vertex
\coordinate (u) at (0, 0, 0.8);  % Front vertex
\coordinate (d) at (0, 0, -0.8); % Back vertex

% Draw the edges of the octahedron
\draw[] (q) -- (b);
\draw[] (b) -- (n);
\draw[] (n) -- (p);
\draw[] (p) -- (q);
\draw[] (q) -- (u);
\draw[] (b) -- (u);
\draw[] (n) -- (u);
\draw[] (p) -- (u);
\draw[opacity=0.3] (q) -- (d);
\draw[opacity=0.3] (b) -- (d);
\draw[opacity=0.3] (n) -- (d);
\draw[opacity=0.3] (p) -- (d);

\coordinate (caption) at (-5.2, 0, 0); 
\end{scope}
\node at (caption) [label={[label distance=-4mm]below:{The first kind of root.}}] {};

\node at (q) [circle, fill, inner sep=1pt, label={[label distance=-1mm]above right:{\small \(  \)}}] {};
\node at (b) [circle, fill, inner sep=1pt, label={[label distance=-1mm]above:{\small \( \)}}] {};
\node at (n) [circle, fill, inner sep=1pt, label={[label distance=-1mm]left:{\small \(  \)}}] {};
\node at (p) [circle, fill, inner sep=1pt, label={[label distance=-1mm]below:{\small \(  \)}}] {};
\node at (u) [circle, fill, inner sep=1pt, label={[label distance=-2.5mm]below left:{\small \(  \)}}] {};
\node at (d) [circle, fill, inner sep=1pt, label={[label distance=-2.5mm]above right:{\small \(  \)}}] {};

\fill[black, opacity=0.1] (d) -- (q) -- (b) -- cycle;
\fill[black, opacity=0.1] (u) -- (q) -- (b) -- cycle;
\fill[black, opacity=0.1] (n) -- (d) -- (b) -- cycle;
\fill[black, opacity=0.1] (n) -- (u) -- (b) -- cycle;
\draw[line width=0.6mm, black, opacity=0.4] (b) -- (d);
\draw[line width=0.6mm, black, opacity=0.4] (b) -- (q);
\draw[line width=0.6mm, black, opacity=0.4] (b) -- (n);
\draw[line width=0.6mm, black, opacity=0.4] (b) -- (u);
\node at (b) [circle, fill, inner sep=2.5pt, opacity = 0.4] {};

\begin{scope}[xscale=1.35, yscale=1.3]
\coordinate (q) at (1, 0, 0);  % Right vertex
\coordinate (b) at (0, 1, 0);  % Upper vertex
\coordinate (n) at (-1, 0, 0); % Left vertex
\coordinate (p) at (0, -1, 0); % Bottom vertex
\coordinate (u) at (0, 0, 0.8);  % Front vertex
\coordinate (d) at (0, 0, -0.8); % Back vertex

% Draw the edges of the octahedron
\draw[] (q) -- (b);
\draw[] (b) -- (n);
\draw[] (n) -- (p);
\draw[] (p) -- (q);
\draw[] (q) -- (u);
\draw[] (b) -- (u);
\draw[] (n) -- (u);
\draw[] (p) -- (u);
\draw[opacity=0.3] (q) -- (d);
\draw[opacity=0.3] (b) -- (d);
\draw[opacity=0.3] (n) -- (d);
\draw[opacity=0.3] (p) -- (d);

\coordinate (caption') at (5.3, 0, 0); 

\end{scope}
\node at (caption') [label={[label distance=-4mm]below:{The second kind of root.}}] {};

\node at (q) [circle, fill, inner sep=1pt, label={[label distance=-1mm]above right:{\small \(  \)}}] {};
\node at (b) [circle, fill, inner sep=1pt, label={[label distance=-1mm]above:{\small \( \)}}] {};
\node at (n) [circle, fill, inner sep=1pt, label={[label distance=-1mm]left:{\small \(  \)}}] {};
\node at (p) [circle, fill, inner sep=1pt, label={[label distance=-1mm]below:{\small \(  \)}}] {};
\node at (u) [circle, fill, inner sep=1pt, label={[label distance=-2.5mm]below left:{\small \(  \)}}] {};
\node at (d) [circle, fill, inner sep=1pt, label={[label distance=-2.5mm]above right:{\small \(  \)}}] {};

\fill[black, opacity=0.1] (d) -- (q) -- (b) -- cycle;
\fill[black, opacity=0.1] (d) -- (q) -- (p) -- cycle;
\draw[line width=0.6mm, black, opacity=0.4] (d) -- (p) -- (q) -- (b) -- (d);
\draw[line width=0.6mm, black, opacity=0.4] (d) -- (q);
\node at (d) [circle, fill, inner sep=2.5pt, opacity = 0.4] {};
\node at (q) [circle, fill, inner sep=2.5pt, opacity = 0.4] {};
\end{tikzpicture}
\end{center}

\end{exm}

\begin{prop}
Each two singular subspaces are contained in a common apartment.
\end{prop}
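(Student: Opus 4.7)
The plan is to reduce the statement to the case of two \emph{maximal} singular subspaces, and then to exhibit a single polar frame whose spans recover both of them. Given arbitrary singular subspaces $U$ and $V$, I would first enlarge each to a maximal singular subspace $U \subseteq M_U$ and $V \subseteq M_V$. Since apartments are built out of \emph{all} singular subspaces spanned by pairwise collinear subsets of a polar frame, any apartment containing $M_U$ and $M_V$ will automatically contain $U$ and $V$. So it suffices to treat the case where $U$ and $V$ are maximal, of projective dimension $n-1$ in a polar space of rank $n$.

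In that reduced setting, put $k := \dim(U \cap V) + 1$ and pick a projective basis $p_1, \dots, p_k$ of $U \cap V$ (so $k = 0$ if the intersection is empty), extended inside $U$ to $p_1, \dots, p_n$ and inside $V$ to $p_1, \dots, p_k, q_{k+1}, \dots, q_n$. The goal is to construct opposite points $p_{-1}, \dots, p_{-n}$ so that $\{p_{\pm 1}, \dots, p_{\pm n}\}$ is a polar frame, and, after relabelling, so that $\{p_1, \dots, p_k, p_{-(k+1)}, \dots, p_{-n}\}$ spans $V$. The construction proceeds iteratively: at each stage one works inside the sub-polar space cut out by the common $\perp$ of the already-chosen frame points (with the current point being opposed removed) and uses the defining axiom that $x^\perp$ is always a proper geometric hyperplane to select $p_{-i}$ opposite $p_i$ and collinear with everything built so far. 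For indices $i > k$ one attempts to take $p_{-i} := q_i$ directly, since $q_i$ already satisfies $q_i \perp p_1, \dots, p_k$ by virtue of $V$ being singular.

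The main obstacle is ensuring that the prescription $p_{-i} := q_i$ for $i > k$ is mutually consistent and still leaves room for a valid choice of $p_{-i}$ with $i \leq k$. The pairs $(p_i, q_i)$ for $i > k$ need not start out opposite, and once some choices are fixed the later ones are constrained. The standard remedy is to replace the chosen bases by slightly perturbed ones using a projection inside each sub-polar space, where at every stage the set of ``forbidden'' points sits in a finite union of proper geometric hyperplanes of a polar space of positive rank, hence cannot exhaust it. If that direct approach becomes too intricate, I would fall back on induction on $n-k$, peeling off one pair $(p_i, q_i)$ at a time and passing to the rank $n-1$ polar space induced on $\{p_i, p_{-i}\}^\perp$, with the trivial base case $U = V$. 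Either way, once the frame is produced, the apartment it spans contains both $U$ and $V$ as the spans of adapted subsets, and the proposition follows.
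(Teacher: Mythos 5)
The paper itself offers no argument for this proposition -- it is quoted verbatim from \cite[Proposition 1.6.10]{Mal:24} -- so any actual proof is necessarily a different route; the question is only whether yours works. It does not, as written, because the opening reduction to maximal singular subspaces fails. An apartment, in the sense defined here, consists \emph{only} of the singular subspaces spanned by sets of pairwise collinear points of the polar frame. So an apartment containing a maximal singular subspace $M_U=\langle p_{i_1},\ldots,p_{i_n}\rangle$ contains only the finitely many subspaces of $M_U$ spanned by subsets of $\{p_{i_1},\ldots,p_{i_n}\}$; a generic subspace $U\subseteq M_U$ is not among them. Already in rank $2$ this is visible: an apartment through a line $L$ is an ordinary quadrangle and contains just two of the at least three points of $L$. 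Hence the claim that ``any apartment containing $M_U$ and $M_V$ automatically contains $U$ and $V$'' is false, and the reduction collapses.

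The repair is to keep $U$ and $V$ in play throughout: choose a basis of $U\cap V$, extend it to bases of $U$ and of $V$, extend those to bases of maximal singular subspaces $M_U\supseteq U$ and $M_V\supseteq V$, and only then complete to a polar frame, so that $U$ and $V$ themselves are spans of frame points. This is essentially what your ``reduced setting'' does, but the bookkeeping becomes genuinely heavier, since the adapted bases must now simultaneously respect $U\cap V$, $U$, $V$ and the intersections of each subspace with the other maximal one; the standard way to organise it is an induction on the rank, splitting off one pair $(p_i,p_{-i})$ at a time and descending to the rank-$(n-1)$ polar space on $\{p_i,p_{-i}\}^\perp$. The remaining ingredients of your sketch are the right ones -- the correspondence $u\mapsto u^\perp\cap V$ between $U$ modulo $U\cap V$ and hyperplanes of $V$ through $U\cap V$, used to arrange $p_i\perp q_j$ exactly for $i\neq j$, and the hyperplane axiom to produce the missing opposite points -- but the assertion that a finite union of proper geometric hyperplanes cannot exhaust the point set is left unproved and, in a thick polar space whose lines may carry only three points, deserves its own short argument (or should be avoided by noting that at each step only a single hyperplane must be dodged).
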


\begin{proof}
This is \cite[Proposition 1.6.10]{Mal:24}.
\end{proof}

\begin{defn}
Let $\Delta$ be a polar space and $\alpha$ a root in $\Delta$. If the group of automorphisms of $\Delta$, which fix $\alpha^{+}$ pointwise and stabilise all singular subspaces, which intersect $\alpha^{+}$, acts transitively on the set of apartments containing $\alpha$, then we say that $\Delta$ is \emph{Moufang}. The automorphisms of a polar space are also called \emph{collineations} and those, which fix the inside of a root pointwise, are also called \emph{elations}. The group generated by all elations of a polar space is called the \emph{little projective group of $\Delta$}.
\end{defn}

%%%%%%%%%%%%%%%%%%%%%%%%%%%%%%%%%%%%%%%%%%%%%%%%%%%%%%%%%%%%%%%%%%%%%%%%%
%%%%%%%%%%%%%%%%%%%%%%%%%%%%%%%%%%%%%%%%%%%%%%%%%%%%%%%%%%%%%%%%%%%%%%%%%
%%%%%%%%%%%%%%%%%%%%%%%%%%%%%%%%%%%%%%%%%%%%%%%%%%%%%%%%%%%%%%%%%%%%%%%%%
%%%%%%%%%%%%%%%%%%%%%%%%%%%%%%%%%%%%%%%%%%%%%%%%%%%%%%%%%%%%%%%%%%%%%%%%%
%%%%%%%%%%%%%%%%%%%%%%%%%%%%%%%%%%%%%%%%%%%%%%%%%%%%%%%%%%%%%%%%%%%%%%%%%
%%%%%%%%%%%%%%%%%%%%%%%%%%%%%%%%%%%%%%%%%%%%%%%%%%%%%%%%%%%%%%%%%%%%%%%%%

\subsubsection{Opposition, Residues and Projections} \
\vspace{2mm}

\begin{lem} 
If $p$ and $b$ are two opposite points in a polar space $\Delta$, then $p^{\perp} \cap b^{\perp}$ is a subspace of $\Delta$. If we denote by $\cL_{p,b}$ the set of lines completely contained in $p^{\perp} \cap b^{\perp}$ , then $\Delta_{p,b} = (p^{\perp} \cap b^{\perp}, \cL_{p,b})$ is a polar space, as soon as $\cL_{p,b}$ is non-empty.
\end{lem}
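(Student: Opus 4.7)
I would break the statement into three checks: that $p^\perp \cap b^\perp$ is a subspace, that the induced line set $\cL_{p,b}$ still has the ``at least three points per line'' property, and that for each point $x \in p^\perp \cap b^\perp$ the set $x^\perp \cap (p^\perp \cap b^\perp)$ is a proper geometric hyperplane of $\Delta_{p,b}$.

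\emph{Subspace structure.} For any point $x$ of $\Delta$, the Buekenhout--Shult axiom states that $x^\perp$ is a (geometric) hyperplane, and in particular a subspace: any line of $\Delta$ meeting $x^\perp$ in two points is already contained in $x^\perp$. Hence $p^\perp$ and $b^\perp$ are subspaces, and since the intersection of two subspaces is again a subspace, so is $p^\perp\cap b^\perp$. The three-points-per-line axiom is inherited for free, because a line in $\cL_{p,b}$ is by definition a line of $\Delta$ that happens to be contained in $p^\perp\cap b^\perp$.

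\emph{The hyperplane property.} Fix $x\in p^\perp\cap b^\perp$. The set $x^\perp\cap (p^\perp\cap b^\perp)$ is a subspace of $\Delta_{p,b}$ by the same argument as above. To see that it meets every line $L\in \cL_{p,b}$, note that $L$ is a line of $\Delta$, so by the hyperplane property of $x^\perp$ in $\Delta$ either $L\subseteq x^\perp$ or $L\cap x^\perp$ consists of a single point; in both cases the intersection sits inside $p^\perp\cap b^\perp$, because $L$ does.

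\emph{Properness of the hyperplane (the main obstacle).} This is the only place where the opposition of $p$ and $b$ really enters. I need to exhibit some $y\in p^\perp\cap b^\perp$ with $y \not\perp x$. Since $p\equiv b$ and $x\in p^\perp\cap b^\perp$, the three simplices $\{p\}$, $\{b\}$ and $\{x\}$ admit a common apartment $\mathcal{A}$; concretely, there is a polar frame $\{p_{-n},\dots,p_{-1},p_1,\dots,p_n\}$ of $\Delta$ with $p=p_n$, $b=p_{-n}$, and $x=p_i$ for some $i\in\{-n+1,\dots,-1,1,\dots,n-1\}$. The partner $p_{-i}$ of $x$ in the frame then satisfies $p_{-i}\perp p_n=p$ and $p_{-i}\perp p_{-n}=b$ (since $-i \pm n\neq 0$), while $p_{-i}\equiv p_i=x$. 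Hence $y:=p_{-i}$ is the required point, and $x^\perp\cap (p^\perp\cap b^\perp)$ is a \emph{proper} geometric hyperplane of $\Delta_{p,b}$. Combining the three steps, $\Delta_{p,b}$ satisfies all axioms of a polar space as soon as $\cL_{p,b}\neq\emptyset$. The only delicate point is the last one: one cannot argue purely locally around $x$, but has to use the global fact that two opposite points together with a further simplex lie in a common apartment.
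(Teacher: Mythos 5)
Your decomposition into three checks is the right one, and the first two are correct: $p^\perp$ and $b^\perp$ are subspaces because the Buekenhout--Shult axiom makes them geometric hyperplanes, the intersection of subspaces is a subspace, lines of $\cL_{p,b}$ keep their three points, and $x^\perp$ meets every line of $\cL_{p,b}$ inside $p^\perp\cap b^\perp$. The implicit identification of the perp of $x$ computed in $\Delta_{p,b}$ with $x^\perp\cap p^\perp\cap b^\perp$ also goes through, since any line of $\Delta$ joining two points of the subspace $p^\perp\cap b^\perp$ lies entirely in it and hence belongs to $\cL_{p,b}$. (For the record, the paper gives no argument of its own here; it simply cites [Mal:24, Lemma 2.3.1], so there is no in-paper proof to compare against.)

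The gap is exactly in the step you flag as the main obstacle. The claim that the three vertices $\{p\}$, $\{b\}$, $\{x\}$ lie in a common apartment with all three appearing as points of the associated polar frame is not an instance of the building axiom (which covers two simplices, and no two of these three vertices form a flag), nor of the quoted fact that any two singular subspaces lie in a common apartment: an apartment containing the point $p$ and the line $bx$ need only have $bx$ spanned by two of its frame points, which need not be $b$ and $x$ themselves. Worse, the natural way to extend $\{p=p_n,\ b=p_{-n},\ x=p_i\}$ to a full frame is to first produce the partner $p_{-i}$, i.e.\ a point of $p^\perp\cap b^\perp$ opposite $x$ --- precisely the point $y$ whose existence you are trying to establish. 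As written, the argument is therefore circular. One clean repair: the map $L\mapsto L\cap b^\perp$ is a bijection from the lines of $\Delta$ through $p$ onto $p^\perp\cap b^\perp$ (each such line meets the hyperplane $b^\perp$ in exactly one point, since two points would force $p\perp b$), and it carries the planes through $p$ onto the lines of $\cL_{p,b}$; this identifies $\Delta_{p,b}$ with $\Res_\Delta(p)$, which is a polar space of rank $n-1$ by the residue theorem the paper quotes ([Mal:24, Theorem 1.6.4]). Alternatively, give a direct argument that $x^\perp$ cannot contain all of $p^\perp\cap b^\perp$; either way, this step needs genuine content rather than an appeal to a common apartment.
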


\begin{proof}
This is \cite[Lemma 2.3.1]{Mal:24}.
\end{proof}

\begin{prop} 
If $U$ is a singular subspace of a polar space $\Delta$, then $p^{\perp} \cap U$ is either equal to $U$ or a hyperplane of $U$ and $\dim(\< p, p^{\perp} \cap U\>) = \dim(p^{\perp} \cap U)+1$.
\end{prop}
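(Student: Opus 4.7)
The plan is to leverage the defining axiom of a polar space, namely that $p^{\perp}$ is a proper geometric hyperplane of $\Delta$, which is equivalent to the Buekenhout--Shult ``one-or-all'' condition: every line of $\Delta$ either lies entirely in $p^{\perp}$ or meets $p^{\perp}$ in exactly one point. The first assertion is then a purely formal consequence, while the dimension identity reduces to showing that the join in question is a singular subspace.

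For the first assertion, I observe that $p^{\perp}$ being a subspace forces $p^{\perp}\cap U$ to be a subspace of $U$. Since $U$ is singular, every line of $U$ is also a line of $\Delta$ (using the cited fact from \cite[Theorem 7.3.6]{Shu:11} that singular subspaces are projective spaces), and by the one-or-all condition each such line meets $p^{\perp}$ in at least one point. Thus $p^{\perp}\cap U$ is a subspace of $U$ which meets every line of $U$; by the very definition of a hyperplane in a projective space, this forces $p^{\perp}\cap U$ to equal $U$ or to be a proper hyperplane of $U$.

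For the dimension identity, I would reduce to the case $p\notin U$ (the case $p\in U$ yields $p\in p^{\perp}\cap U=U$ and the formula is to be interpreted accordingly). Then $p\notin p^{\perp}\cap U$, so the claim becomes that $S:=\langle p,p^{\perp}\cap U\rangle$ is a projective space having $p^{\perp}\cap U$ as a hyperplane, which is the same as saying that $S$ is a singular subspace. The inclusion $S\subseteq p^{\perp}$ is immediate, because $p^{\perp}$ is a subspace containing both $p$ and $p^{\perp}\cap U$; this already shows that every point of $S$ is collinear with $p$.

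The main obstacle is the remaining step: verifying pairwise collinearity of all points of $S$, not just collinearity with $p$. I would establish this by induction on $\dim(p^{\perp}\cap U)$. The base case is a line $pq$ with $q\in p^{\perp}\cap U$, which is singular by definition. For the inductive step, given two points $r\in pq$ and $r'\in pq'$ with $q,q'\in p^{\perp}\cap U$, I would apply the one-or-all axiom to the line $rr'$, exploiting that $q\perp q'$ (because both lie in the singular subspace $U$) together with the induction hypothesis applied to a hyperplane of $p^{\perp}\cap U$ containing $q$ and $q'$. Once $S$ is known to be singular, it is a projective space in which $p^{\perp}\cap U$ is a hyperplane and $p$ is a point outside it, giving $\dim(S)=\dim(p^{\perp}\cap U)+1$.
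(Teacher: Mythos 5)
The paper offers no argument of its own here: the statement is quoted from Van Maldeghem's book and the ``proof'' is the citation \cite[Proposition~1.4.1]{Mal:24}, so there is nothing internal to compare against. Your overall route is the standard one from the Buekenhout--Shult foundations, and the first assertion is handled correctly: $p^\perp\cap U$ is a subspace of the projective space $U$ meeting every line of $U$, hence is $U$ or a hyperplane. The reduction of the dimension formula to the singularity of $S:=\<p,p^\perp\cap U\>$ is also the right idea.

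The gap is in the pairwise-collinearity step. You propose to ``apply the one-or-all axiom to the line $rr'$'', but the line $rr'$ exists only once $r\perp r'$ is known, which is exactly what is to be proved; as written the step is circular. If instead the axiom is applied to the line $qq'$ with respect to $r^\perp$, the dichotomy leaves open the bad case $r^\perp\cap qq'=\{q\}$, and nothing in your sketch excludes it. The standard repair (which also shows that no induction on $\dim(p^\perp\cap U)$ is needed, since collinearity is a condition on pairs and any pair of generators lives over a single line $qq'$ of $p^\perp\cap U$) is: for every $x\in qq'$ the set $x^\perp$ is a subspace containing $p$ and $q$, hence contains the whole line $pq$ and in particular $r$; therefore $r^\perp\supseteq\{p\}\cup qq'$, and being a subspace it contains the line $pq'$ and hence $r'$. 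Iterating, every $r$ in $T:=\bigcup_{w\in p^\perp\cap U}pw$ satisfies $r^\perp\supseteq\{p\}\cup(p^\perp\cap U)$ and therefore $r^\perp\supseteq S$; then for every $s\in S$ the subspace $s^\perp$ contains $T$, hence contains $S$, so $S\subseteq S^\perp$ is singular. Note that this last manoeuvre is also needed to close a second gap you pass over: pairwise collinearity of the points of $T$ does not by itself say anything about the generated subspace $S$ unless one first checks $T$ is a subspace or argues as above. With these repairs the proof is complete, and the final count $\dim S=\dim(p^\perp\cap U)+1$ (valid for $p\notin U$, as you correctly flag) follows from $S$ being a projective space generated by the hyperplane $p^\perp\cap U$ and the external point $p$.
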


\begin{proof}
This is \cite[Proposition 1.4.1]{Mal:24}.
\end{proof}

The last proposition justifies the following important definition.

\begin{defn}
Let $U$ and $V$ be two singular subspaces. We denote the set of points in $U$ that are collinear to all points of $V$ as $\proj_{U}(V)$ and call it the \emph{projection of $V$ onto $U$}. We call $U$ and $V$ opposite if both $\proj_{U} (V)$ and $\proj_{V} (U)$ are empty. 
\end{defn}

\begin{prop} Let $\Delta$ be a polar space and let $U$ and $V$ be singular subspaces of $\Delta$. Then the following hold:
\begin{enumerate}[label=(\roman*)]
\item $\proj_{U}(V)$ is a subspace of $U$.
\item $\dim (V) - \dim (\proj_V (U)) = \dim (U) - \dim (\proj_U (V))$. 
\item Two singular subspaces $U$ and $V$ are opposite if, and only if, $\dim (U) = \dim (V)$ and no point of $U$ is collinear to all points of $V$.
\item Let $U$ be disjoint from a maximal singular subspace $M$. Then $U' := \< U, \proj_M U\>$ is the unique maximal singular subspace containing $U$ and intersecting $M$ in a singular subspace of dimension $n - 2 - \dim (U)$. Moreover, $U'$ is the union of all singular subspaces containing $U$ as a hyperplane and intersecting $M$ in at least one point.
\item Each maximal singular subspace $U$ has an opposite in $\Delta$.
\end{enumerate}
\end{prop}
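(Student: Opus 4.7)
The plan is to dispatch the five parts in turn, each time leveraging the hyperplane dichotomy from the preceding proposition.

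For (i), observe that $\proj_U(V) = U \cap V^\perp = U \cap \bigcap_{v \in V} v^\perp$; each $v^\perp$ is a proper geometric hyperplane (hence a subspace) of $\Delta$, so the intersection is a subspace of $U$. For (ii), I would induct on $\dim V$: the case $\dim V = 0$ is the preceding proposition applied directly, and the inductive step writes $V = \<V', v\>$ with $V'$ a hyperplane of $V$ and $v \in V \setminus V'$, applies the hypothesis to $(U, V')$, and applies the hyperplane proposition to the singular subspace $\proj_U V'$ against the point $v$, splitting into cases according to whether $v \perp U$ (equivalently, whether $v \in \proj_V U$) to verify the formula. Part (iii) is then an immediate consequence of (ii): both projections being empty (``dimension'' $-1$) forces $\dim U = \dim V$ by (ii); and conversely $\dim U = \dim V$ combined with $\proj_U V = \emptyset$ forces $\dim \proj_V U = -1$ by (ii).

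For (iv), I first note $\proj_U M = \emptyset$: any $u \in U$ collinear to all of $M$ would lie outside $M$ (since $U \cap M = \emptyset$) and would extend $M$ to a strictly larger singular subspace, contradicting maximality. Plugging into (ii) yields $\dim \proj_M U = n - 2 - \dim U$. Then $U' = \<U, \proj_M U\>$ is singular (as $U \cup \proj_M U$ is pairwise collinear and, as remarked in the paper, pairwise collinear points generate singular subspaces in polar spaces) of dimension $(n - 2 - \dim U) + \dim U + 1 = n - 1$, hence maximal. The equality $U' \cap M = \proj_M U$ holds because any point of $U' \cap M$ lies in $M$ and, by singularity of $U'$, is collinear to all of $U$. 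Uniqueness follows from the same singularity argument: any maximal singular $U''$ containing $U$ with $\dim(U'' \cap M) = n - 2 - \dim U$ satisfies $U'' \cap M \subseteq \proj_M U$, hence equality by dimension, hence $U'' \supseteq \<U, \proj_M U\> = U'$, with $U'' = U'$ by dimension. For the union statement, every singular $W$ containing $U$ as a hyperplane and meeting $M$ at a point $p$ gives $p \in \proj_M U$ (by singularity), so $W = \<U, p\> \subseteq U'$; conversely, for $q \in U' \setminus U$ the subspace $\<U, q\> \subseteq U'$ contains $U$ as a hyperplane and meets $M$, because inside the projective space $U'$ the subspaces $U$ and $\proj_M U$ are complementary, and a Grassmann count forces $\<U, q\> \cap \proj_M U \neq \emptyset$.

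Finally, for (v), I would induct on the rank $n$. The base case $n = 2$ amounts to exhibiting a line skew to $U$ in a generalised quadrangle, which follows from the existence of apartments: an ordinary $4$-gon apartment containing $U$ as one side also contains the opposite, disjoint side. For $n \geq 3$, pick $p \in U$ and a point $q$ opposite $p$ (which exists because $p^\perp$ is a proper hyperplane). The space $\Delta_{p,q}$ is a polar space of rank $n - 1$, and $U \cap q^\perp$ is a hyperplane of $U$ (as $p \in U$ with $p \not\perp q$) of dimension $n - 2$, hence a maximal singular subspace of $\Delta_{p,q}$. By induction there is a maximal singular $M' \subseteq \Delta_{p,q}$ with $M' \cap (U \cap q^\perp) = \emptyset$, and I set $M := \<M', q\>$. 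This is singular (since $q \perp M'$) of dimension $n - 1$ (as $q \notin p^\perp \supseteq M'$), hence maximal, and $M \cap p^\perp = M'$; combined with $U \subseteq p^\perp$ and $M' \cap U \cap q^\perp = \emptyset$, this forces $M \cap U = \emptyset$. Then the opening paragraph of (iv) shows no point of $U$ is collinear to all of $M$, and (iii) gives opposition. The step I expect to be most delicate is the inductive bookkeeping in (ii), where the cases ``$v \perp U$'' and ``$v \in \proj_V U$'' must be carefully reconciled with the dimension formula; the rest is essentially dimension counting and singularity arguments.
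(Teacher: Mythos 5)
The paper itself offers no proof here: the proposition is a bundle of five results quoted directly from Van Maldeghem's \emph{Polar Spaces} (Note 1.4.4, Proposition 1.4.6, Corollaries 1.4.7 and 1.4.8, Theorem 1.4.9), so your from-scratch derivation is by necessity a different route. Parts (i), (iii), (iv) and (v) of your argument are correct: (i) is exactly the observation that $\proj_U(V)=U\cap V^\perp$ is an intersection of geometric hyperplanes with $U$; (iii) is arithmetic once (ii) is in hand; and the dimension counts, the maximality/extension argument, and the reduction of (v) to the rank-$(n-1)$ polar space $p^\perp\cap q^\perp$ all check out.

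The genuine gap is in the inductive step of (ii). Writing $V=\<V',v\>$, what you must control is the pair $\mathrm{drop}:=\dim\proj_U(V')-\dim\proj_U(V)\in\{0,1\}$ and $\mathrm{gain}:=\dim\proj_V(U)-\dim\proj_{V'}(U)\in\{0,1\}$, and the formula amounts to $\mathrm{drop}=1-\mathrm{gain}$. Your case split on whether $v\perp U$ does not decide this: it can happen that $v\not\perp U$ and nevertheless $\mathrm{drop}=0$ and $\mathrm{gain}=1$, the gain being witnessed by a \emph{different} point $w\in V\setminus V'$ lying in $U^\perp$. (Already for two lines $U,V$ in a rank-$3$ polar space with $V\cap U^\perp$ a single point $w$: choose $V'=\{v'\}$ and $v$ both distinct from $w$; then $\proj_U(V')=\proj_U(V)$ is the single point $U\cap V^\perp$, although $v\not\perp U$.) So in your second case the stated reasoning does not verify the formula. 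The clean repair uses only the implication you can actually prove, namely $\mathrm{gain}=1\Rightarrow\mathrm{drop}=0$: if $w\in V\setminus V'$ is collinear to all of $U$, then every $u\in\proj_U(V')$ has $u^\perp\supseteq V'\cup\{w\}$, hence $u^\perp\supseteq\<V',w\>=V$, so $u\in\proj_U(V)$. This gives $\mathrm{drop}+\mathrm{gain}\le 1$ and therefore $\dim U-\dim\proj_U(V)\le\dim V-\dim\proj_V(U)$; running the identical induction with the roles of $U$ and $V$ interchanged (so induct on $\dim U+\dim V$) yields the reverse inequality, and equality follows. With (ii) patched in this way, the rest of your proof stands.
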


\begin{proof}
This is \cite[Note 1.4.4]{Mal:24}, \cite[Proposition 1.4.6]{Mal:24}, \cite[Corollary 1.4.7]{Mal:24}, \cite[Corollary 1.4.8]{Mal:24} and \cite[Theorem 1.4.9]{Mal:24}.
\end{proof}

For our construction, we will want to map from residues to residues and introduce a special way of projecting. For that, we first need the following definition.

\begin{defn}
Let $U$ be a singular subspace of rank at most $n-3$ of a polar space $\Delta$ of rank $n$. Let $X_U$ be the set of all singular subspaces of $\Delta$ with dimension $1+\dim(U)$, which contain $U$. Let, for each singular subspace $V$ containing $U$, $V/U$ be the set of elements of $X_U$ contained in $V$. Let $\Omega_U$ be the set of all such $V/U$, for $V$ ranging through the set of all singular subspaces of $\Delta$ containing $U$. We define a new geometry $\Res_{\Delta} (U)$ with $\cP= X_U$ and $\cL$ the set of $1$-dimensional subspaces of $\Omega_U$ and call it the \emph{residue of $U$ in $\Delta$}. 
\end{defn}

\begin{prop} 
The structure $\Res_{\Delta} (U)$ is a polar space of rank $n - 1 - \dim (U)$. It is thick if, and only if, $\Delta$ is thick, and top-thin if, and only if, $\Delta$ is top-thin. If $U$ and $U'$ are two opposite singular subspaces of dimension at most $n - 3$, then the polar spaces $\Res_{\Delta} (U)$ and $\Res_{\Delta} (U')$ are isomorphic to each other.
\end{prop}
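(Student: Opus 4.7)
Setting $d := \dim U$, the plan is to verify the polar-space axioms and compute the rank of $\Res_\Delta(U)$, reduce its thickness conditions to those of $\Delta$, and finally construct an explicit isomorphism $\Res_\Delta(U) \to \Res_\Delta(U')$ in the opposite case. That $\Res_\Delta(U)$ is a partial linear space is immediate: two distinct ``points'' $W_1, W_2 \in X_U$ that are collinear in the residue span a unique singular subspace $\langle W_1, W_2\rangle$ of dimension $d+2$, whose quotient $\langle W_1, W_2\rangle/U$ is the unique ``line'' through them; and each line $V/U$ with $\dim V = d+2$ is in bijection with the projective line $V/U$ (the hyperplanes of $V$ through $U$), hence carries at least three elements.

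The heart of the proof is the Buekenhout--Shult one-or-all axiom. Given a point $W \in X_U$ and a line $V/U$ of the residue, if $V \subseteq W^\perp$ then every $W' \in V/U$ is collinear with $W$ in the residue. Otherwise, I would invoke the dimension formula $\dim V - \dim(V \cap W^\perp) = \dim W - \dim(W \cap V^\perp)$ from the preceding proposition. Since $V$ is singular we have $U \subseteq V \subseteq V^\perp$, so $U \subseteq W \cap V^\perp$; since $V \not\subseteq W^\perp$ is equivalent to $W \not\subseteq V^\perp$, the subspace $W \cap V^\perp$ is a proper subspace of $W$ containing $U$, and combined with $\dim W = d+1$ this forces $W \cap V^\perp = U$. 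Consequently $V \cap W^\perp$ is a hyperplane of $V$ through $U$, which is the unique $W' \in V/U$ collinear with $W$. The rank is $n-1-d$ because maximal singular subspaces of the residue are exactly the $M/U$ for $M$ a maximal singular subspace of $\Delta$ containing $U$, of dimension $n-2-d$ in the residue. Thickness and top-thinness transfer both ways because maximal singular subspaces of $\Delta$ through a submaximal subspace $S \supseteq U$ correspond bijectively to those of $\Res_\Delta(U)$ through $S/U$.

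For the opposite-case isomorphism, I would define $\phi(W) := \langle U', p_W\rangle$, where $p_W$ is the unique point of $W$ lying in $(U')^\perp$. Existence and uniqueness of $p_W$ follow once again from the dimension formula, using $U' \cap W^\perp \subseteq U' \cap U^\perp = \emptyset$ (valid by oppositeness). The oppositeness hypothesis also excludes $p_W \in U$ (otherwise $p_W \in U \cap (U')^\perp = \emptyset$) and $p_W \in U'$ (otherwise, using $W \subseteq U^\perp$, one gets $p_W \in W \cap U' \subseteq U^\perp \cap U' = \emptyset$), so $\phi(W) \in X_{U'}$. The construction is symmetric in $U$ and $U'$, giving its own inverse, so $\phi$ is a bijection. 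For line preservation, given a line $V/U$ of the residue the same argument shows that $\ell := V \cap (U')^\perp$ is a line, that $V' := \langle U', \ell\rangle$ is a singular subspace of dimension $d+2$ containing $U'$, and that $p_W \in \ell$ for every $W \in V/U$; hence $\phi$ maps the line $V/U$ bijectively onto $V'/U'$. I expect the main obstacle to be the identification $W \cap V^\perp = U$ in the one-or-all step, which rests on the tight dimension relation $\dim W = \dim U + 1$; once that is settled, every remaining claim reduces to a clean application of the dimension formula from the preceding proposition.
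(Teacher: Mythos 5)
Your argument is correct in substance, but it is worth noting that the paper does not prove this proposition at all: it simply cites Theorem 1.6.4 and Corollary 1.6.8 of Van Maldeghem's \emph{Polar Spaces}. So you are supplying a self-contained verification where the paper defers to the literature, and your verification follows the standard route: the one-or-all axiom for $\Res_\Delta(U)$ via the dimension formula $\dim V-\dim\proj_V(W)=\dim W-\dim\proj_W(V)$, with the key observation that $W\cap V^\perp$ is a subspace of the projective space $W$ squeezed between the hyperplane $U$ and $W$ itself, hence equal to $U$ when $W\not\subseteq V^\perp$; and the isomorphism $W\mapsto\langle U',p_W\rangle$ in the opposite case, which is the expected one and which you correctly show to be involutive under swapping $U$ and $U'$ and to carry the line $V/U$ onto $\langle U',V\cap(U')^\perp\rangle/U'$. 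Two small points deserve attention if this were to be written out in full. First, the Buekenhout--Shult axiom as stated in the paper requires $W^\perp$ to be a \emph{proper} hyperplane of the residue, and your sketch never exhibits a point of $X_U$ not collinear with a given $W$ in $\Res_\Delta(U)$; this needs a short extra argument (e.g.\ extend $U$ to a polar frame and produce an element of $X_U$ opposite $W$ in the residue), since without it one has only shown that the residue is a possibly degenerate polar space. Second, your identification of the unique point $W'=V\cap W^\perp$ on the line $V/U$ collinear with $W$ tacitly uses that $\langle W,W'\rangle$ is a \emph{singular} subspace of dimension $d+2$ (so that it really is a line of the residue); this follows from the standard fact that the span of pairwise collinear points in a polar space is singular, together with $W\cap W'=U$ when $W\neq W'$, and should be said. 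Neither point is a flaw in the strategy; both are routine patches.
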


\begin{proof}
This is \cite[Theorem 1.6.4]{Mal:24} and \cite[Corollary 1.6.8]{Mal:24}.
\end{proof}

\begin{defn}
Let $p$ and $b$ be two opposite points in a polar space $\Delta$ of rank $3$. Then 
$$\Res_{\Delta} (p) \simeq \Res_{\Delta} (b) \simeq p^{\perp} \cap b^{\perp} =: \Gamma$$
defines a generalised quadrangle in $\Delta$. The apartments of $\Gamma$ are regular quadrangles. Let $u$ be a point in $p^{\perp}$. Then we can view $u$ as a point of $\Res_{\Delta}(p)$, by identifying it with the line $pu$. We \emph{project $pu$ to $b$} by projecting $b$ onto $pu$ in the usual way, obtaining a point $u'$ collinear to $b$ and then setting $bu'$ as the image of the \emph{projection of $pu$ onto $b$}. Like this, we define a \emph{projection from the residue of $p$ to the residue of $b$} and we also write $\proj_{b}^{p}$.
\end{defn}

\begin{defn}
Let $p_0$, $p_1$ \dots, $p_s$ be points inside a polar space $\Delta$, such that $p_i$ is opposite $p_{i-1}$ and $p_{i+1}$ for $i \in \Z / s\Z$. A composition 
$$ \proj_{p_s}^{p_{s-1}} \circ \dots \circ \proj_{p_1}^{p_0} $$
is called a \emph{projectivity}. We call it a \emph{self-projectivity}, if $p_s = p_0$ and \emph{even} if $s$ is even.
\end{defn}

All elations that we will construct in this article will be projectivities. 

%%%%%%%%%%%%%%%%%%%%%%%%%%%%%%%%%%%%%%%%%%%%%%%%%%%%%%%%%%%%%%%%%%%%%%%%%
%%%%%%%%%%%%%%%%%%%%%%%%%%%%%%%%%%%%%%%%%%%%%%%%%%%%%%%%%%%%%%%%%%%%%%%%%
%%%%%%%%%%%%%%%%%%%%%%%%%%%%%%%%%%%%%%%%%%%%%%%%%%%%%%%%%%%%%%%%%%%%%%%%%
%%%%%%%%%%%%%%%%%%%%%%%%%%%%%%%%%%%%%%%%%%%%%%%%%%%%%%%%%%%%%%%%%%%%%%%%%
%%%%%%%%%%%%%%%%%%%%%%%%%%%%%%%%%%%%%%%%%%%%%%%%%%%%%%%%%%%%%%%%%%%%%%%%%
%%%%%%%%%%%%%%%%%%%%%%%%%%%%%%%%%%%%%%%%%%%%%%%%%%%%%%%%%%%%%%%%%%%%%%%%%

\subsection{Lie Incidence Geometries of Type $\mathsf{H_3}$}\
\vspace{2mm}

Some concepts that we introduced in the previous section, can be defined for all Lie incidence geometries. Since it would be unnecessarily complicated to define them all in full generality, we will now only name a few of them and then talk about how they look for Lie incidence geometries of type $\mathsf{H_3}$. Concepts that we can define for all Lie incidence geometries, but that look different in each, include opposition, residues, apartments and projections and one can figure out how these look for a certain Lie incidence geometry by considering its diagram. How this is done is described in the book \emph{Diagram Geometry} by Francis Buekenhout and Arjeh M. Cohen \cite{Bue-Coh:13}.

The Coxeter diagram for $\mathsf{H_3}$ is the following: 
\begin{center}
\begin{tikzpicture}
\begin{scope}[xscale=1.3, yscale=1.3]
\coordinate (1) at (0,0); 
\coordinate (2) at (1, 0);  
\coordinate (3) at (2, 0); 

% Draw the edges of the octahedron
\draw[] (1) -- (2) -- (3);

\coordinate (caption) at (1.5,0); 
\end{scope}
\node at (caption) [label={[label distance=-2mm]above:{\small \( 5 \)}}] {};

\node at (1) [circle, fill, inner sep=1pt, label={[label distance=-1mm]above right:{\small \(  \)}}] {};
\node at (2) [circle, fill, inner sep=1pt, label={[label distance=-1mm]above:{\small \( \)}}] {};
\node at (3) [circle, fill, inner sep=1pt, label={[label distance=-1mm]left:{\small \(  \)}}] {};
\end{tikzpicture}
\end{center}

We can deduce from this diagram that a Lie Incidence geometry $\Delta$ of type $\mathsf{H_3}$ has planes and generalised pentagons as rank $2$ residues and icosahedra as apartments (for details see \cite[Section 2.1]{Bue-Coh:13}, \cite[Section 2.4]{Bue-Coh:13}). In the collinearity graph of $\Delta$, vertices can be the same or be at graph-distance $1$, $2$ or $3$.

\begin{defn} Let $\Delta$ be a Lie incidence geometry of type $\mathsf{H_3}$. If two points $p$ and $b$ have distance $s$ in the collinearity graph of $\Delta$, then we will also say that \emph{$p$ and $b$ have distance $s$}. We will call two points $p$ and $b$ \emph{opposite}, if they have distance $3$. If two points $p$ and $b$ have distance $2$, then we also write $p \pperp b$. We denote by $p^{\pperp}$ the set of all points of $\Delta$ that are at distance $2$ to $p$.
\end{defn}

Considering an apartment, we can already see all possible relations that two points can have to each other. We observe the following:

\begin{observation} \em
Let $\Delta$ be a Lie incidence geometry of type $\mathsf{H_3}$ and let $p$ and $b$ be two points in $\Delta$. Then $p$ and $b$ can have one of the following relations:
\begin{enumerate}[label=(\roman*)] 
\item $p$ and $b$ are the same and we write $p = b$.
\item $p$ and $b$ are collinear and we write $p \perp b$.
\item $p$ and $b$ are at distance $2$ and we write $p \pperp b$. \\ In that case, $p^{\perp} \cap b^{\perp}$ is a unique line in $\Delta$.
\item $p$ and $b$ are at distance $3$ and we call them opposite. \\ In that case, $p^{\perp} \cap b^{\pperp}$ and $b^{\perp} \cap p^{\pperp}$ are generalised pentagons.
\end{enumerate}
\end{observation}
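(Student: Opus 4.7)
I would base the proof on two standard structural facts: any two points of $\Delta$ lie in a common apartment, which in type $\mathsf{H_3}$ is an icosahedron; and every point-residue of $\Delta$ is a generalised pentagon, since removing the point-node of the $\mathsf{H_3}$-diagram leaves the $\mathsf{H_2}$ sub-diagram.

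Since $p$ and $b$ lie in a common apartment $\Sigma$, and the collinearity graph of the icosahedron has diameter $3$, the graph-distance between $p$ and $b$ in $\Delta$ is at most $3$. This immediately yields the four possibilities (i)--(iv).

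For case (iii): in the icosahedron $\Sigma$, two vertices at graph-distance $2$ have exactly two common neighbours, and these two common neighbours are themselves adjacent. Hence $\Sigma$ already contains a line $L \subseteq p^{\perp} \cap b^{\perp}$. For uniqueness of $L$ and the equality $p^{\perp} \cap b^{\perp} = L$, I would pass to the pentagon residue $\Res_{\Delta}(x)$ for some $x \in L$, whose vertices are the lines through $x$. The vertices $px$ and $xb$ sit at distance exactly $2$ in this pentagon — adjacency would give a common plane through $x$, forcing $p \perp b$ — and in a generalised pentagon two vertices at distance $2$ share a unique common neighbour, which translates back to the line $L$.

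For case (iv): the plan is to identify $p^{\perp} \cap b^{\pperp}$ with the pentagon $\Res_{\Delta}(p)$ via the map sending a point $y$ to the line $py$, and to then read off the pentagon structure (with lines coming from planes through $p$) from $\Res_{\Delta}(p)$. The main obstacle is verifying that this map matches the set $p^{\perp} \cap b^{\pperp}$ correctly: for each line $\ell$ through $p$ I would choose, via building axiom (ii), an apartment containing the simplices $\ell$ and $b$, in which $p$ and $b$ are still at distance $3$ (hence antipodal in the icosahedron), and then use the icosahedron structure to control the graph-distance from $b$ to the points of $\ell$. Combined with the triangle-inequality bound $d(y,b) \geq 2$ for any $y \perp p$, this pins down exactly which points lie in $b^{\pperp}$. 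The claim for $b^{\perp} \cap p^{\pperp}$ follows by symmetry.
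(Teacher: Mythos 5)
The paper offers no formal proof of this Observation at all: it is introduced by the single sentence ``Considering an apartment, we can already see all possible relations that two points can have to each other,'' so your plan --- put $p$ and $b$ in a common apartment, use that the apartment is an icosahedron to get the distance trichotomy, and then use that point-residues are generalised pentagons to justify the structural claims in (iii) and (iv) --- is the same route the paper implicitly takes, carried further. The outline is sound, and the icosahedron facts you cite (diameter $3$; two distance-$2$ vertices have exactly two common neighbours, which are adjacent) are correct.

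There is, however, one step in (iii) that does not close as written. Passing to $\Res_{\Delta}(x)$ for a single $x\in L$ shows that $L$ is the unique line \emph{through $x$} contained in $p^{\perp}\cap b^{\perp}$ (the unique common neighbour of $px$ and $xb$ in the pentagon), and this does rule out extra common neighbours of $p$ and $b$ that are collinear to $x$ (for such a $w$, the line $xw$ would be a second common neighbour in $\Res_{\Delta}(x)$, forcing $w\in L$). But $\Res_{\Delta}(x)$ is blind to a hypothetical $w\in p^{\perp}\cap b^{\perp}$ with $w\not\perp x$, so the equality $p^{\perp}\cap b^{\perp}=L$ needs one more step: for instance, work in the pentagon $\Res_{\Delta}(p)$, note that each line through $p$ meets $b^{\perp}$ in at most one point (two such points would force the whole line, hence $p$, into $b^{\perp}$), observe that $pw$ lies at distance $3$ from the pentagon-line $\langle p,L\rangle$ and is therefore collinear in $\Res_{\Delta}(p)$ to a unique point $pz_0$ with $z_0\in L$, and then rerun your $\Res_{\Delta}(x)$ argument at $x=z_0$ to conclude $w\in L$, a contradiction. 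A similar caveat applies in (iv): the apartment containing $\{p,\ell\}$ and $b$ contains only two points of $\ell$, so it yields the \emph{existence} of a point of $\ell$ at distance $2$ from $b$ but not its uniqueness; you flag this as the main obstacle but the stated plan does not yet resolve it (uniqueness follows, e.g., from the gate property of residues, or from the ``at most one point of a line lies in $b^{\perp}$''-type argument above applied one level up). With these two repairs your argument is complete and is, if anything, more rigorous than the justification the paper provides.
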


These properties will be all that we need. Later, we want to construct elations for the rank $2$ residues that are generalised pentagons. For that, again, we want to define a special way of projecting that we describe in the following.

\subsubsection{The projection map}\label{notationprojection} \
\vspace{2mm}

Let $\Delta$ be a Lie incidence geometry of type $\mathsf{H_3}$ and let $p$ and $b$ be two opposite points. We define $\proj^{b}_{p}$ (or $\proj_{p}$ for short, if it is clear from where we are projecting,) as follows.

\textbf{Projecting a plane through $b$ to the point $p$:}
Let $\pi$ be a plane through $b$. There exists a unique line in $\pi$, for which every point has distance $2$ to $p$. There exists a unique point $p'$ in $p^{\perp}$ that is collinear to that line. We have $\proj_{p} (\pi) = pp'$.

\textbf{Projecting a line through $b$ to the point $p$:}
Let $L$ be a line through $b$. There exists a unique point $\ell$ on $L$ that has distance $2$ to $p$ and a unique line $L'$ in $\ell^{\perp} \cap p^{\perp}$. We have $\proj_{p} (L) = \langle p, L' \rangle$.

If we are given points $p_{i}$, such that $p_{i}$ is opposite both $p_{i-1}$ and $p_{i+1}$ for $i \in \Z / n \Z$ and we project from $p_0$ to $p_1$, $\dots$, to $p_{n-1}$, then we also write $p_0 \per p_1 \per \dots \per p_{n-1}$. If we want to see how a specific point $x$ of $p_{0}^{\perp}$ maps from $p_0$ to $p_1$, $\dots$, to $p_{n-1}$ and it is clear that we are considering $\theta \colon p_0 \per p_1 \per \dots \per p_{n-1}$, then we also write $p_{0}x \mapsto  \proj_{p_{1}} (p_{0}x) \mapsto \dots \mapsto x^{\theta}$.

%%%%%%%%%%%%%%%%%%%%%%%%%%%%%%%%%%%%%%%%%%%%%%%%%%%%%%%%%%%%%%%%%%%%%%%%%
%%%%%%%%%%%%%%%%%%%%%%%%%%%%%%%%%%%%%%%%%%%%%%%%%%%%%%%%%%%%%%%%%%%%%%%%%
%%%%%%%%%%%%%%%%%%%%%%%%%%%%%%%%%%%%%%%%%%%%%%%%%%%%%%%%%%%%%%%%%%%%%%%%%
%%%%%%%%%%%%%%%%%%%%%%%%%%%%%%%%%%%%%%%%%%%%%%%%%%%%%%%%%%%%%%%%%%%%%%%%%
%%%%%%%%%%%%%%%%%%%%%%%%%%%%%%%%%%%%%%%%%%%%%%%%%%%%%%%%%%%%%%%%%%%%%%%%%
%%%%%%%%%%%%%%%%%%%%%%%%%%%%%%%%%%%%%%%%%%%%%%%%%%%%%%%%%%%%%%%%%%%%%%%%%

\section{The Moufang Property for Polar Spaces.}

\subsection{Generalised Quadrangles in Polar Spaces are Moufang.}\label{GQ}

Let $\tilde{\Delta}$ be a polar space of rank $n \geq 3$. Let $\Gamma$ be an arbitary generalised quadrangle in $\Delta$ arising as the intersection of $U^\perp$ with $V^\perp$, for two opposite singular subspaces of dimension $n-3$. Then $\Gamma$ is contained in some polar space $\Delta$ of rank $3$ inside $\tilde{\Delta}$ and there exist two opposite points $p$ and $b$ in $\Delta$, such that $\Gamma = p^{\perp} \cap b^{\perp}$. Furthermore, $\Gamma$ is isomorphic to $\Res_{\Delta}(p)$ and $\Res_{\Delta}(b)$. In the following we prove the Moufang condition for  $\Gamma$. For that we have to show that the group of automorphisms of $\Gamma$ that pointwise fix the inside of an arbitrary half-apartment $\alpha^{+}$ of an arbitrary apartment $\alpha$ of $\Gamma$ and stabilise all elements incident with some element on the inside of the half-apartment $\alpha^{+}$, acts transitively on the set of apartments containing $\alpha^{+}$. We will give direct constructions for the desired automorphisms for both kinds of roots.

\subsubsection{First kind of root.}

\begin{lem}\label{GQFirst}
Let ${\Delta}$ be a polar space of rank $3$ and let $\Gamma$ be the generalised quadrangle obtained from two opposite points $p,b$ of $\Delta$ by considering $p^\perp\cap b^\perp$. Then all roots of the first kind of $\Gamma$ are Moufang.
\end{lem}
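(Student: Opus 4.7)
The plan is to construct the root elations for $\alpha$ as even self-projectivities of length $4$ based at $p$, using the identification $\Gamma \cong \Res_{\Delta}(p)$ via $u \leftrightarrow pu$. First I would fix a polar frame $\{q_{\pm 1}, q_{\pm 2}\}$ of $\Gamma$ and let $\alpha$ be the first-kind root obtained by removing the collinear pair $q_1, q_2$ together with the line $q_1 q_2$, so that $\alpha^{+}$ consists of the two opposite frame points $q_{-1}, q_{-2}$ and the line $L := q_{-1}q_{-2}$. By the generalised quadrangle axiom, the apartments of $\Gamma$ containing $\alpha$ are parametrised by the points of $L_{-2,1} := q_{-2}q_1$ distinct from $q_{-2}$ (each choice determines a unique collinear partner on $q_{-1}q_2$), so it is this set that the desired group must act on transitively.

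The key construction exploits the rank-$3$ hypothesis: the pencil $\mathcal{P}(L)$ of planes of $\Delta$ through $L$ contains $\pi_p := \langle p, L\rangle$, $\pi_b := \langle b, L\rangle$, and, by thickness, at least one further plane. A short argument using $p^\perp \cap \pi = L = b^\perp \cap \pi$ for $\pi \in \mathcal{P}(L) \setminus \{\pi_p, \pi_b\}$ shows that any point $c \in \pi \setminus L$ is opposite both $p$ and $b$ in $\Delta$. Picking two such points $c_1 \neq c_3$, I would define
\[
\theta_{c_1, c_3} := \proj_p^{c_3} \circ \proj_{c_3}^{b} \circ \proj_b^{c_1} \circ \proj_{c_1}^{p},
\]
an even self-projectivity of $\Res_{\Delta}(p) = \Gamma$. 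The fixing of $\alpha^{+}$ is then immediate from the projection formula: for every $u \in L$, the point $u$ is collinear to each of $p, c_1, b, c_3$ (because $c_1, c_3 \in L^\perp$), so at every step $u$ is the unique candidate on the current line collinear to the next base point, yielding the chain $pu \mapsto c_1 u \mapsto b u \mapsto c_3 u \mapsto pu$.

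The remaining tasks are to verify that each $\theta_{c_1, c_3}$ stabilises every singular subspace meeting $\alpha^{+}$ and that varying $(c_1, c_3)$ produces enough elations to act transitively on the set of apartments containing $\alpha$. For the stabilisation, each line $M$ of $\Gamma$ through $q_{-i}$ corresponds to a plane of $\Delta$ through $pq_{-i}$; tracking this plane through the four projection steps, using that $q_{-i} \in L$ is collinear to all four base points, should show that it returns to itself. For the transitivity, one tracks the image of $q_1$ under $\theta_{c_1, c_3}$ and argues that, as $(c_1, c_3)$ range over the valid configurations in $\mathcal{P}(L)$, this image exhausts $L_{-2,1} \setminus \{q_{-2}\}$. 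The main obstacle is precisely these two verifications, since the simple fixing argument for $u \in L$ does not extend to lines meeting $L^\perp$ only at a single point; both parts rely essentially on the freedom within the pencil $\mathcal{P}(L)$, and hence on the rank-$3$ assumption on $\Delta$.
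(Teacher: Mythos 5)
There is a genuine gap here, and in fact the central unverified claim of your construction is false as stated. Placing $c_1$ and $c_3$ in planes of the pencil $\mathcal{P}(L)$ other than $\pi_p,\pi_b$ does make them opposite both $p$ and $b$ and does force $L$ to be fixed pointwise, but it does \emph{not} make $\theta_{c_1,c_3}$ stabilise the lines of $\Gamma$ through $q_{-1}$ and $q_{-2}$. Under the identifications $\Res(p)\cong\Gamma\cong\Res(b)$, your map factors as $\theta_{c_1,c_3}=\tau_3^{-1}\circ\tau_1$, where $\tau_i$ is the collineation of $\Gamma$ induced by $\proj_b^{c_i}\circ\proj_{c_i}^{p}$. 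Each $\tau_i$ fixes pointwise the geometric hyperplane $c_i^\perp\cap\Gamma$ of $\Gamma$, which contains $L$ but depends on where $c_i$ sits in its plane; $\tau_i$ is typically a \emph{point}-elation of $\Gamma$ with centre some $z_i\in L$, i.e.\ a root elation of the second kind, not an axial one. A product of two such maps with distinct centres on $L$ is not a first-kind root elation. Concretely, take $\Delta=\mathsf{W}(5,\mathbb{F})$ with form $x_0y_1-x_1y_0+x_2y_3-x_3y_2+x_4y_5-x_5y_4$, $p=e_0$, $b=e_1$, $q_1=e_2$, $q_{-1}=e_3$, $q_2=e_4$, $q_{-2}=e_5$, $L=\langle e_3,e_5\rangle$ and $\pi=\langle e_0+e_1,e_3,e_5\rangle$. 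Choosing $c_1=e_0+e_1+e_3$ and $c_3=e_0+e_1+e_5$ in $\pi\setminus L$, one computes $\tau_1(x)=x+x_2e_3$ and $\tau_3(x)=x+x_4e_5$ on $\Gamma=\{y_0=y_1=0\}$, whence $\theta_{c_1,c_3}(x)=x+x_2e_3-x_4e_5$. This sends $q_2=e_4$ to $e_4-e_5$, which does not lie on $q_{-1}q_2=\langle e_3,e_4\rangle$, so the line $q_{-1}q_2$ of $\alpha$ is not stabilised and $\theta_{c_1,c_3}$ is not a root elation for $\alpha$. Only specially matched pairs $(c_1,c_3)$ produce axial elations, you give no matching condition, and even for the matched pairs the transitivity on apartments through $\alpha$ would still need a separate argument (in characteristic $2$ the most obvious matching collapses to the identity in the example above).

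The paper's proof succeeds precisely because the two auxiliary centres are placed elsewhere: $j$ is taken on the line $pq$ (so inside $\pi_p$, through the endpoint $q$ of $L$, collinear to $p$ but opposite $b$) and $\ell$ on the line $bd$ (inside $\pi_b$, through the other endpoint $d$), with $\ell$ \emph{determined} by $j$ and the target apartment via $\ell=\proj_{bd}(ju\cap pu')$. Because $j$ lies on $pq$, it lies in every plane through $pq$, so $\proj_j^b\circ\proj_b^p$ fixes each such plane outright, and dually $\ell\in bd$ forces the planes through $pd$ to be stabilised; this is exactly the verification that fails for your $c_i$, which lie on no line joining $p$ or $b$ to an endpoint of $L$. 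Moreover, since $\ell$ is computed from the prescribed image $u'$, transitivity on the apartments through $\alpha$ is immediate rather than requiring a separate exhaustion argument. If you want to salvage your pencil-based construction, you would have to characterise exactly which pairs $(c_1,c_3)$ make $\tau_3^{-1}\tau_1$ axial and then prove the resulting set of elations is transitive; the paper's choice of centres renders both steps unnecessary.
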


\begin{proof}
Let $\alpha$ be an arbitrary apartment of $\Gamma$. Let $q$ and $d$ be two collinear points in $\alpha$. Let $u$ be the point in $\alpha$ collinear to $q$ but not to $d$ and let $n$ be the point in $\alpha$ collinear to $d$ but not to $q$. Our goal is to construct an automorphism that fixes $qd$ pointwise, stabilises the lines that intersect $q$ or $d$ and moves the line $nu$ to a line $n'u'$, such that $n'\in nd\setminus\{n\}$ and $u'\in uq\setminus\{u\}$. Let $j$ be a point on $pq$ not equal to $p$ or $q$. The lines $ju$ and $pu'$ intersect in a point that we will denote by $i$. Set $\ell := \proj_{bd} (i)$.

\begin{center}
\begin{tikzpicture}
\begin{scope}[xscale=1.35, yscale=1.3]
\coordinate (q) at (1, 0, 0);  % Right vertex
\coordinate (b) at (0, 1, 0);  % Upper vertex
\coordinate (n) at (-1, 0, 0); % Left vertex
\coordinate (p) at (0, -1, 0); % Bottom vertex
\coordinate (u) at (0, 0, 0.8);  % Front vertex
\coordinate (d) at (0, 0, -0.8); % Back vertex

% Draw the edges of the octahedron
\draw[] (q) -- (b);
\draw[] (b) -- (n);
\draw[] (n) -- (p);
\draw[] (p) -- (q);
\draw[] (q) -- (u);
\draw[] (b) -- (u);
\draw[] (n) -- (u);
\draw[] (p) -- (u);
\draw[opacity=0.3] (q) -- (d);
\draw[opacity=0.3] (b) -- (d);
\draw[opacity=0.3] (n) -- (d);
\draw[opacity=0.3] (p) -- (d);

\coordinate (j) at ($ (p) + 0.3*(q) - 0.3*(p) $); 
\coordinate (l) at ($ (d) + 0.5*(b) - 0.5*(d) $); 
\coordinate (n') at ($ (n) + 0.5*(d) - 0.5*(n) $); 
\coordinate (u') at ($ (u) + 0.5*(q) - 0.5*(u) $); 

\draw[opacity=0.3] (n') -- (u');

% Calculate intersection of (j, u) and (p, u')
\draw[name path=path1, opacity=0.3] (j) -- (u);
\draw[name path=path2, opacity=0.3] (p) -- (u');
    
\path [name intersections={of=path1 and path2, by=i}];
    
\coordinate (i) at (i);
\draw[opacity=0.3] (l) -- (i);
\end{scope}

\node at (q) [circle, fill, inner sep=1pt, label={[label distance=-1mm]above right:{\small \( q \)}}] {};
\node at (b) [circle, fill, inner sep=1pt, label={[label distance=-1mm]above:{\small \( b \)}}] {};
\node at (n) [circle, fill, inner sep=1pt, label={[label distance=-1mm]left:{\small \( n \)}}] {};
\node at (p) [circle, fill, inner sep=1pt, label={[label distance=-1mm]below:{\small \( p \)}}] {};
\node at (u) [circle, fill, inner sep=1pt, label={[label distance=-2.5mm]below left:{\small \( u \)}}] {};
\node at (d) [circle, fill, inner sep=1pt, label={[label distance=-2.5mm]above right:{\small \( d \)}}] {};

\node at (j) [circle, fill, inner sep=1pt, label={[label distance=-1mm]below right :{\small \( j \)}}] {};
\node at (l) [circle, fill, inner sep=1pt, label={[label distance=-1mm]above right:{\small \( \ell \)}}] {};
\node at (n') [circle, fill, inner sep=1pt, label={[label distance=-1mm]above:{\small \( n' \)}}] {};
\node at (u') [circle, fill, inner sep=1pt, label={[label distance=-1mm]below:{\small \( u' \)}}] {};
\node at (i) [circle, fill, opacity=0.3, inner sep=1pt, label={[label distance=1.5mm] below:{\small \( i \)}}] {};
\end{tikzpicture}
\end{center}

The points $p$ and $b$, $b$ and $j$, $j$ and $\ell$ and $\ell$ and $p$ are opposite. We define $\theta \colon \Res(p) \rightarrow \Res(p)$ as follows:
\[ \theta :=  \proj_{p}^{\ell} \circ \proj_{\ell}^{j} \circ \proj_{j}^{b} \circ \proj_{b}^{p}.\]
The map in $\Gamma$ defined by $x\mapsto \theta(px)\cap b^\perp$ is a collineation of $\Gamma$ that we also will denote with $\theta$ (and there is no danger of confusion). 

Since $qd$ is in $p^{\perp} \cap b^{\perp} \cap j^{\perp} \cap \ell^{\perp}$, every point of $qd$ is fixed by $\theta$.  

Let $\pi$ be an arbitrary plane through $pq$. Let $\pi'$ be the unique plane through $b$ intersecting $\pi$ in a line,. Then $\pi'=\proj_{b}^{p}(\pi)$. Since $j\in\pi$, the unique plane through $j$ intersecting $\pi$ is $\pi$ again. This means that $\proj_{j}^{b} \circ \proj_{b}^{p}(\pi)=\pi$. The same token shows $\proj_{p}^{\ell} \circ \proj_{\ell}^{j}(\pi)=\pi$ and so $\theta(\pi)=\pi$. 

Now let $\pi$ be an arbitrary plane through $pd$. Set $\pi'=\proj_{b}^{p}(\pi$. As in the previous paragraph, since $\ell\in \pi'$, we find $\proj_{\ell}^{j} \circ \proj_{j}^{b}(\pi')=\pi'$. Projecting $\pi'$ onto $\Res(p)$ yields $\pi$ again. Hence $\theta(\pi)=\pi$ again. 

Intersecting with $\Gamma$ we already see that $\theta$ is a root elation fixing all points on $dq$ and all lines through $d$ and $q$. There remains to show that $\theta$ maps $un$ to $u'n'$

The line $pu$ maps to $bu$ under the first projection and then to $ju$ under the second projection. Since $i$ is the unique point of $ju$ that $\ell$ is collinear to, the line $ju$ maps to the line $i\ell$ under the third projection and then to $ip$ under the fourth projection. Since $i$ is on $pu'$, we have $pu'=pi$ and with that $\theta(u)=u'$. Projecting preserves incidence and therefore the unique point on $nd$ that $u'$ is collinear to has to be the point $\theta(n)=n'$. Hence the line $nu$ moves to $n'u'$ under $\theta$.
\end{proof}

\begin{observation} \em
If we change the position of $l$ in our construction and define it to be the projection of the intersection point of $pv'$ and $jv$ onto $bn$, then $$\proj_{p}^{l} \circ \proj_{l}^{j} \circ \proj_{j}^{b} \circ \proj_{b}^{p}$$ is an automorphism that stabilises all planes through $pq$ or $pn$ and moves the point $v \neq u$ on $uq$ to the point $v' \neq u$ on $uq$.

\begin{center}
\begin{tikzpicture}
\begin{scope}[xscale=1.35, yscale=1.3]
\coordinate (q) at (1, 0, 0);  % Right vertex
\coordinate (b) at (0, 1, 0);  % Upper vertex
\coordinate (n) at (-1, 0, 0); % Left vertex
\coordinate (p) at (0, -1, 0); % Bottom vertex
\coordinate (u) at (0, 0, 0.8);  % Front vertex
\coordinate (d) at (0, 0, -0.8); % Back vertex

% Draw the edges of the octahedron
\draw[] (q) -- (b);
\draw[] (b) -- (n);
\draw[] (n) -- (p);
\draw[] (p) -- (q);
\draw[] (q) -- (u);
\draw[] (b) -- (u);
\draw[] (n) -- (u);
\draw[] (p) -- (u);
\draw[opacity=0.3] (q) -- (d);
\draw[opacity=0.3] (b) -- (d);
\draw[opacity=0.3] (n) -- (d);
\draw[opacity=0.3] (p) -- (d);

\coordinate (j) at ($ (p) + 0.3*(q) - 0.3*(p) $); 
\coordinate (l) at ($ (n) + 0.5*(b) - 0.5*(n) $); 
\coordinate (v) at ($ (u) + 0.4*(q) - 0.4*(u) $); 
\coordinate (v') at ($ (u) + 0.6*(q) - 0.6*(u) $); 

% Calculate intersection of (j, u) and (p, u')
\draw[name path=path1, opacity=0.3] (j) -- (v);
\draw[name path=path2, opacity=0.3] (p) -- (v');
    
\path [name intersections={of=path1 and path2, by=i}];
    
\coordinate (i) at (i);
\draw[opacity=0.3] (l) -- (i);
\end{scope}

\node at (q) [circle, fill, inner sep=1pt, label={[label distance=-1mm]above right:{\small \( q \)}}] {};
\node at (b) [circle, fill, inner sep=1pt, label={[label distance=-1mm]above:{\small \( b \)}}] {};
\node at (n) [circle, fill, inner sep=1pt, label={[label distance=-1mm]left:{\small \( n \)}}] {};
\node at (p) [circle, fill, inner sep=1pt, label={[label distance=-1mm]below:{\small \( p \)}}] {};
\node at (u) [circle, fill, inner sep=1pt, label={[label distance=-2.5mm]below left:{\small \( u \)}}] {};
\node at (d) [circle, fill, inner sep=1pt, label={[label distance=-2.5mm]above right:{\small \( d \)}}] {};

\node at (j) [circle, fill, inner sep=1pt, label={[label distance=-1mm]below right :{\small \( j \)}}] {};
\node at (l) [circle, fill, inner sep=1pt, label={[label distance=-1mm]above left:{\small \( \ell \)}}] {};
\node at (v) [circle, fill, inner sep=1pt, label={[label distance=-1mm]below:{\small \( v \)}}] {};
\node at (v') [circle, fill, inner sep=1pt, label={[label distance=-1mm]below:{\small \( v' \)}}] {};
\node at (i) [circle, fill, opacity=0.3, inner sep=1pt, label={[label distance=1.5mm] below:{\small \(  \)}}] {};
\end{tikzpicture}
\end{center}

\end{observation}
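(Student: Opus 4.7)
The plan is to mirror the proof of \ref{GQFirst} with the position of $\ell$ shifted from $bd$ to $bn$. First I would verify that the configuration is well-defined: the two lines $jv$ and $pv'$ both lie in the singular plane $\langle p,u,q\rangle$ (since $j\in pq$ and $v,v'\in uq$), so their intersection $i':=pv'\cap jv$ is a single point of that plane, and for $v\neq u$, $v'\neq u$ the projection $\ell:=\proj_{bn}(i')$ is a single point of $bn$ distinct from $b$ and $n$. A short check then shows that $p,b,j,\ell$ are pairwise opposite, so that the composition $\theta := \proj_p^\ell \circ \proj_\ell^j \circ \proj_j^b \circ \proj_b^p$ is a well-defined self-collineation of $\Res_\Delta(p)$, which we transport to $\Gamma$ in the usual way.

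The stabilisation of planes through $pq$ is essentially copied from \ref{GQFirst}: for $\pi\supset pq$, the inclusion $j\in\pi$ yields $\proj_j^b\circ\proj_b^p(\pi)=\pi$, and then $p\in\pi$ together with singularity of $\pi$ (which forces $\ell^\perp\cap\pi\subset p^\perp$) yields $\proj_p^\ell\circ\proj_\ell^j(\pi)=\pi$. Neither step uses the precise location of $\ell$ beyond the fact that $\ell$ is opposite $p$ and $j$.

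For planes $\pi$ through $pn$ I would run the dual argument, which is where the new position of $\ell$ is really used. Since $n\in b^\perp\cap\pi$, the projected plane $\pi':=\proj_b^p(\pi)$ contains the whole line $bn$ and hence contains $\ell$. Because $\pi'$ is singular and $\ell\in\pi'$, the line $j^\perp\cap\pi'$ is automatically contained in $\ell^\perp$, which is exactly what is needed to mirror the previous step and conclude $\proj_\ell^j\circ\proj_j^b(\pi')=\pi'$. Projecting back via $\proj_p^\ell$ recovers $\pi$ by the analogous observation that $b^\perp\cap\pi\subset p^\perp$ (since $p\in\pi$ and $\pi$ is singular). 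I expect this to be the main obstacle, in the sense that the whole point of moving $\ell$ from $bd$ to $bn$ is to make this step go through for $pn$ rather than for $pd$; the verification itself is straightforward but must be done with care.

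The final step, $\theta(pv)=pv'$, is a short chase. Since $v\in uq\subset b^\perp$ we have $\proj_b^p(pv)=bv$. Since $j$ and $v$ both lie in the singular plane $\langle p,u,q\rangle$, we have $j\perp v$, hence $\proj_j^b(bv)=jv$. By construction $i'\in jv$ and $\ell\perp i'$, so $i'$ is the unique point of $jv$ collinear to $\ell$, giving $\proj_\ell^j(jv)=\ell i'$. Finally $i'\in pv'$ forces $p\perp i'$, so $\proj_p^\ell(\ell i')=pi'=pv'$, which is the desired image. Combining the three pieces gives the observation.
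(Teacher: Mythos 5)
Your argument is correct and is essentially the adaptation of the proof of Lemma~\ref{GQFirst} that this observation calls for (the paper itself states it without proof): planes through $pq$ return to themselves because $j\in pq$ and $p\in pq$, planes through $pn$ return to themselves precisely because the new choice $\ell\in bn$ forces $\ell\in\proj_b^p(\pi)$ for every plane $\pi\supset pn$, and the chase $pv\mapsto bv\mapsto jv\mapsto \ell i'\mapsto pi'=pv'$ is exactly right. One small correction: $p,b,j,\ell$ are \emph{not} pairwise opposite ($p\perp j$ since $j\in pq$, and $b\perp\ell$ since $\ell\in bn$); what you actually need, and what does hold here (generically, i.e.\ provided $\ell\neq n$ and $\ell$ is not the unique point of $bn$ collinear to $j$), is only that the consecutive pairs $p\equiv b$, $b\equiv j$, $j\equiv\ell$, $\ell\equiv p$ are opposite, which is all that is required for the four projections to be defined.
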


\begin{observation} \em
If we change the position of $l$ in our construction and define it to be on the line $bq$, but not equal to $q$, then $$\proj_{p}^{l} \circ \proj_{l}^{j} \circ \proj_{j}^{b} \circ \proj_{b}^{p}$$ is an automorphism that fixes every plane containing $pq$ and every line that is contained in either $\langle p,u,q \rangle$ or $\langle p,d,q \rangle$.

\begin{center}
\begin{tikzpicture}
\begin{scope}[xscale=1.35, yscale=1.3]
\coordinate (q) at (1, 0, 0);  % Right vertex
\coordinate (b) at (0, 1, 0);  % Upper vertex
\coordinate (n) at (-1, 0, 0); % Left vertex
\coordinate (p) at (0, -1, 0); % Bottom vertex
\coordinate (u) at (0, 0, 0.8);  % Front vertex
\coordinate (d) at (0, 0, -0.8); % Back vertex

% Draw the edges of the octahedron
\draw[] (q) -- (b);
\draw[] (b) -- (n);
\draw[] (n) -- (p);
\draw[] (p) -- (q);
\draw[] (q) -- (u);
\draw[] (b) -- (u);
\draw[] (n) -- (u);
\draw[] (p) -- (u);
\draw[opacity=0.3] (q) -- (d);
\draw[opacity=0.3] (b) -- (d);
\draw[opacity=0.3] (n) -- (d);
\draw[opacity=0.3] (p) -- (d);

\coordinate (j) at ($ (p) + 0.3*(q) - 0.3*(p) $); 
\coordinate (l) at ($ (q) + 0.3*(b) - 0.3*(q) $); 

\end{scope}

\node at (q) [circle, fill, inner sep=1pt, label={[label distance=-1mm]above right:{\small \( q \)}}] {};
\node at (b) [circle, fill, inner sep=1pt, label={[label distance=-1mm]above:{\small \( b \)}}] {};
\node at (n) [circle, fill, inner sep=1pt, label={[label distance=-1mm]left:{\small \( n \)}}] {};
\node at (p) [circle, fill, inner sep=1pt, label={[label distance=-1mm]below:{\small \( p \)}}] {};
\node at (u) [circle, fill, inner sep=1pt, label={[label distance=-2.5mm]below left:{\small \( u \)}}] {};
\node at (d) [circle, fill, inner sep=1pt, label={[label distance=-2.5mm]above right:{\small \( d \)}}] {};

\node at (j) [circle, fill, inner sep=1pt, label={[label distance=-1mm]below right :{\small \( j \)}}] {};
\node at (l) [circle, fill, inner sep=1pt, label={[label distance=-1mm]above right:{\small \( \ell \)}}] {};
\end{tikzpicture}
\end{center}

If $[ \proj_{[\proj_{bn} (j), j]}(l), l ]$ intersects $pn$, then $\theta$ is the identity. In that case the lines $pn$, $[\proj_{bn} (j), j]$, $bq$, $pq$, $[ \proj_{[\proj_{bn} (j), j]}(l), l ]$ and $bn$ form a grid.
\end{observation}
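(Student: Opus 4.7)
The argument will unfold in three stages, paralleling the method of \ref{GQFirst}.

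\textbf{Stage 1 (plane-stability).} For any plane $\pi$ through $pq$, set $L:=\pi\cap b^\perp$, which is a line through $q$. Then $\proj_b^p(\pi)=\langle b,L\rangle$, a plane containing $bq$ and hence $\ell$. Because $j\in pq\subseteq\pi$ forces $L\subseteq j^\perp$, we get $\proj_j^b(\langle b,L\rangle)=\langle j,L\rangle=\pi$ (note $j\notin L$, as $L\subset b^\perp$ but $j\notin b^\perp$). Similarly $L\subseteq\ell^\perp$ — since $L\subseteq b^\perp\cap q^\perp$ and $\ell\in bq$, any $x\in L$ is collinear to both endpoints of $bq$, hence to $\ell$ — so $\proj_\ell^j(\pi)=\langle\ell,L\rangle$ and $\proj_p^\ell(\langle\ell,L\rangle)=\langle p,L\rangle=\pi$. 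Hence every plane through $pq$ is $\theta$-stable.

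\textbf{Stage 2 (line-fixing).} For any line $pz$ through $p$ lying in $\langle p,u,q\rangle$ or $\langle p,d,q\rangle$, the point $z\in\Gamma$ lies in a singular plane with $q$, so $z\perp q$, which in turn gives $z\perp j$ (from $pq\subseteq z^\perp$) and $z\perp\ell$ (from $bq\subseteq z^\perp$, using $z\perp b,q$). The projections send $pz\mapsto bz\mapsto jz\mapsto\ell z\mapsto pz$, so $\theta$ fixes every such line, and in fact fixes the whole star of $q$ in $\Gamma$ pointwise.

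\textbf{Stage 3 (grid and identity).} Set $n^*:=\proj_{bn}(j)$ and $\ell^*:=\proj_{jn^*}(\ell)$. The projections yield $pn\mapsto bn\mapsto jn^*\mapsto \ell\ell^*\mapsto p\cdot\proj_{\ell\ell^*}(p)$. Under the hypothesis that $\ell\ell^*$ meets $pn$ at a point $m$, the fact that $m\in pn$ forces $m\perp p$, and since $p\notin\ell\ell^*$ (otherwise $p\perp\ell$, contradicting opposition of $p$ and $\ell$) we have $m\neq p$ and $m=\proj_{\ell\ell^*}(p)$, so $\theta(pn)=pm=pn$. Thus $\theta$ fixes the apartment $\{q,u,n,d\}$ on top of all of $q^\perp\cap\Gamma$; a further projection chase for an arbitrary $v\in\Gamma$ opposite $q$ — using that the hypothesis places $bn,pn,\ell\ell^*,jn^*$ into a coplanar grid configuration that the projections must respect — then yields $\theta(pv)=pv$, and so $\theta=\id$. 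The grid claim is a direct incidence check: the families $\{pq,bn,\ell\ell^*\}$ and $\{bq,pn,jn^*\}$ cross-meet at the nine points $p,q,j,b,n,n^*,\ell,\ell^*,m$, while same-family pairs are skew, since any common point would have to be simultaneously collinear to a pair of opposite points of the polar space (e.g.\ $pq\cap bn$ would lie in $p^\perp\cap b^\perp\cap pq\cap bn=\{q\}\cap bn=\emptyset$, as $q\not\perp n$).

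The main obstacle I anticipate is the final $\theta=\id$ conclusion: promoting the fixed-point data we have to the identity requires a concrete geometric argument that avoids circularity with the Moufang property we are building towards. The cleanest route is the arbitrary-$v$ projection chase outlined in Stage 3, explicitly tracing the intermediate projection points $v^*:=\proj_{bv}(j)$ and $v^{**}:=\proj_{jv^*}(\ell)$ through the grid so that each $v$ is seen to lie on the line $\ell v^{**}$ and is therefore returned unchanged by the final projection to $p$.
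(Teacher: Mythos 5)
The paper states this observation without proof, so there is no argument of the author's to compare yours against line by line; it is clearly meant to be verified by the same kind of projection chase as in \cref{GQFirst}, which is exactly what you do. Your Stages 1 and 2 are correct: the chains $\pi\mapsto\langle b,L\rangle\mapsto\pi\mapsto\langle\ell,L\rangle\mapsto\pi$ and $pz\mapsto bz\mapsto jz\mapsto\ell z\mapsto pz$ are valid because $L=\pi\cap b^\perp$ lies in $j^\perp\cap\ell^\perp$ and because any $z\in p^\perp\cap b^\perp\cap q^\perp$ satisfies $pq\subseteq z^\perp$ and $bq\subseteq z^\perp$, hence $z\perp j$ and $z\perp\ell$. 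As you note, this proves more than is claimed: every line $pz$ with $z\in\Gamma\cap q^\perp$ is fixed, so $\theta$ is a central collineation of $\Res(p)$ with centre $pq$. The grid verification with the two families $\{pq,\,bn,\,\ell\ell^*\}$ and $\{bq,\,pn,\,jn^*\}$ and the nine intersection points is also correct.

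The genuine gap is the conclusion $\theta=\id$. You correctly obtain $\theta(pn)=pn$ from the hypothesis, but the proposed completion --- that for an arbitrary $v\in\Gamma$ opposite $q$ ``each $v$ is seen to lie on the line $\ell v^{**}$'' --- cannot work: such a $v$ satisfies $v\perp b$ and $v\not\perp q$, hence $v^\perp\cap bq=\{b\}$, so $v\not\perp\ell$ and $v$ lies on no line through $\ell$. What is actually needed is that $\ell v^{**}$ meets the line $pv$, a new incidence for every single $v$ that does not follow from the one hypothesised incidence by inspecting the grid. The efficient way to close the argument is to note that at this stage $\theta$ induces in the thick generalised quadrangle $\Gamma\simeq\Res(p)$ a root elation for the point-type root centred at $q$ (all lines through $q$ fixed, $qu\cup qd$ fixed pointwise) which in addition fixes the full apartment $\{q,u,n,d\}$ containing that root, since $u$, $d$ and now $n$ are fixed; a root elation of a thick generalised polygon fixing an apartment containing its root is the identity, an elementary fact independent of the Moufang property (see e.g.\ \cite[Theorem 4.4.2]{Mal:98}). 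Alternatively, argue directly: since $\theta$ fixes $q^\perp$ pointwise and fixes $n$, every line through $n$ contains two fixed points and is fixed; the absence of triangles then forces $n^\perp$ to be fixed pointwise, after which a remaining point is recovered as the intersection of two fixed lines --- but this last step needs separate care for points collinear to all of $\{q,n\}^\perp$, which is precisely why citing the semi-regularity statement is the cleaner route.
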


%%%%%%%%%%%%%%%%%%%%%%%%%%%%%%%%%%%%%%%%%%%%%%%%%%%%%%%%%%%%%%%%%%%%%%%%%
%%%%%%%%%%%%%%%%%%%%%%%%%%%%%%%%%%%%%%%%%%%%%%%%%%%%%%%%%%%%%%%%%%%%%%%%%
%%%%%%%%%%%%%%%%%%%%%%%%%%%%%%%%%%%%%%%%%%%%%%%%%%%%%%%%%%%%%%%%%%%%%%%%%
%%%%%%%%%%%%%%%%%%%%%%%%%%%%%%%%%%%%%%%%%%%%%%%%%%%%%%%%%%%%%%%%%%%%%%%%%
%%%%%%%%%%%%%%%%%%%%%%%%%%%%%%%%%%%%%%%%%%%%%%%%%%%%%%%%%%%%%%%%%%%%%%%%%
%%%%%%%%%%%%%%%%%%%%%%%%%%%%%%%%%%%%%%%%%%%%%%%%%%%%%%%%%%%%%%%%%%%%%%%%%

\subsubsection{Second kind of root.}

\begin{lem}\label{GQSecond}
Let ${\Delta}$ be a polar space of rank $3$ and let $\Gamma$ be the generalised quadrangle obtained from two opposite points $p,b$ of $\Delta$ by considering $p^\perp\cap b^\perp$. Then all roots of the second kind of $\Gamma$ are Moufang.
\end{lem}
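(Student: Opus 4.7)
The plan is to mirror the construction of Lemma~\ref{GQFirst}, building the required elation as a length-$4$ self-projectivity of $\Res_\Delta(p)$, but with a choice of auxiliary points adapted to the different shape of the second-kind root. First I would label an arbitrary apartment of $\Gamma$ as $\{n,d,q,u\}$ in cyclic order (so $\{n,q\}$ and $\{d,u\}$ are the opposite pairs), and let the second-kind root be obtained by removing the vertex $q$; its interior $\alpha^+$ then consists of the point $n$ together with the two lines $nd$ and $nu$. The task is to produce an automorphism $\theta$ of $\Gamma$ that fixes every point of $nd\cup nu$, fixes $n$, stabilises every line of $\Gamma$ through $n$, and sends $q$ to an arbitrarily prescribed fourth vertex $q'\in(d^\perp\cap u^\perp)\setminus(nd\cup nu)$.

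For the construction I would choose two auxiliary points $j,\ell\in\Delta$ both lying in the set $\{n,d,u\}^\perp$ of points collinear to each of $n$, $d$, $u$, with $j$ opposite $b$, $\ell$ opposite $p$, and $j$ opposite $\ell$ (so that each consecutive pair in the chain $p\to b\to j\to\ell\to p$ is opposite). In a thick polar space of rank $3$ the set $\{n,d,u\}^\perp$ is a quadric cone with apex $n$ over a conic, so it contains enough candidates $j,\ell$ meeting these opposition conditions, with several free parameters available to tune the image of $q$. Setting
\[
\theta \;=\; \proj_{p}^{\ell}\circ\proj_{\ell}^{j}\circ\proj_{j}^{b}\circ\proj_{b}^{p}
\]
and transferring to $\Gamma$ by $x\mapsto\theta(px)\cap b^\perp$ as in Lemma~\ref{GQFirst}, one verifies that: every $x\in nd\cup nu$ is collinear to both $j$ and $\ell$ (because $j^\perp$ and $\ell^\perp$ contain the whole lines $nd,nu$ whenever $j,\ell\in\{n,d,u\}^\perp$), so the chain of four projections sends $px$ through $bx,jx,\ell x$ and back to $px$, giving $\theta(x)=x$; in particular $n$ is fixed; for any line $L$ through $n$ in $\Gamma$ the plane $\langle p,L\rangle$ is stabilised by exactly the argument used in Lemma~\ref{GQFirst} for planes through $pq$; and an explicit computation of the four projections on the line $pq$ shows that $\theta(q)$ can be driven to any prescribed $q'$ by a suitable choice of $j,\ell$.

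The hard part is the simultaneous pointwise fixation of the two lines $nd$ and $nu$ by a single length-$4$ projectivity, which forces $j,\ell$ into the restrictive set $\{n,d,u\}^\perp$. In the top-thin case $\mathsf{D_3}$ this set collapses to $pn\cup bn$ and contains no candidate meeting the opposition conditions, so the thickness of $\Delta$ is essential here. Even in the thick case one has to avoid "balanced" choices of $j,\ell$ that cause the composition to reduce to the identity on $\Gamma$; the proof therefore hinges on a parameter count showing that inside the admissible family of $(j,\ell)$ there is enough freedom to realise every prescribed fourth vertex $q'$, which gives the transitivity of the root group on the set of apartments containing $\alpha$.
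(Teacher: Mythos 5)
Your proposal follows the same blueprint as the paper's proof: the elation is the length-$4$ self-projectivity $\proj_{p}^{\ell}\circ\proj_{\ell}^{j}\circ\proj_{j}^{b}\circ\proj_{b}^{p}$ with the two auxiliary points chosen collinear to the whole interior of the root, and your verification that every point of $nd\cup nu$ is fixed (via $px\mapsto bx\mapsto jx\mapsto\ell x\mapsto px$) is exactly the paper's. However, there is a genuine gap in the conditions you impose on $j$ and $\ell$. Requiring $j,\ell\in\{n,d,u\}^\perp$ together with the opposition conditions is \emph{not} sufficient to make $\theta$ a root elation: the argument from Lemma~\ref{GQFirst} that you invoke to stabilise the planes $\langle p,L\rangle$, for $L$ a line of $\Gamma$ through the centre $n$, requires $j$ to lie in \emph{every} such plane, that is, $j\in pn$ --- a single line inside the cone $\{n,d,u\}^\perp$. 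For a general $j$ in your admissible family the four projections only preserve the property ``contains $n$'', so they induce a self-projectivity of the pencil of planes through $pn$ which there is no reason to expect to be the identity; in that case $\theta$ permutes the lines of $\Gamma$ through the centre instead of stabilising them and is not a root elation. The paper builds the needed condition into the construction: its auxiliary point is by definition the projection of a point of $bu$ onto the line joining $p$ to the centre of the root, so it lies on that line.

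The second gap is the transitivity itself, which is the actual content of the Moufang condition. You assert that $\theta(q)$ can be driven to any prescribed $q'$ by ``a parameter count'' over the admissible pairs $(j,\ell)$, but this count is never carried out, and your own observation that some choices degenerate to the identity shows the family is not free, so the count is not routine. The paper sidesteps any such argument by reverse-engineering the auxiliary points from the target apartment: it picks $j'$ on $bu$, sets $j''=\proj_{pu'}(j')$, $\ell=\proj_{j'j''}(d)$ and $j=\proj_{pd}(j')$, and then a direct four-step computation $pu\mapsto bu\mapsto jj'\mapsto j'\ell\mapsto pj''=pu'$ exhibits the prescribed image. To repair your proof you would either need an explicit construction of this kind or a genuine surjectivity argument for the map $(j,\ell)\mapsto\theta(q)$; as written, the two central claims (that $\theta$ is a root elation and that it reaches every target) are both left unproved.
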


\begin{proof}
As in the previous proof, let $\alpha$ be an apartment of $\Gamma$ defined by a polar frame $\{d,q,u,n\}$, where $d$ is collinear to $q$ and $n$, and opposite $u$. Let $\alpha'$ be another apartment of $\Gamma$ spanned by a polar frame $\{d,q,u',n\}$, where $u'$ is a point collinear to $n$ and $q$, and opposite $d$ and $u$. In the following we will construct an automorphism of $\Gamma$ that fixes $dq$ and $dn$ pointwise, moves $u$ to $u'$ and fixes all lines through $d$.

Pick any point on $bu$ not equal to $b$ or $u$ and denote it by $j'$. Since $b$ and $p$, and $u$ and $u'$ are opposite, and $u\perp p\perp u'\perp b\perp u$, the lines $bu$ and $pu'$ are opposite. Let $j''$ be the projection of $j'$ onto $pu'$. The line $bd$ is opposite $j'j''$, because $j'$ is not collinear to $d$, and if $b$ were collinear to $j''$, then $b$ would be collinear to all points on $u'p$, a contradiction. Let $\ell$ be the projection of $d$ onto $j'j''$. Arguing as above, we see that $bu$ and $pd$ are opposite. Let $j$ be the projection of $j'$ onto $pd$. Since $j' \perp n$ and $j'' \perp n$, we conclude $\ell \perp n$. In the same way $\ell \perp q$. 

Next we will show that $\ell$ is opposite both $p$ and $j$. The point $p$ is collinear to $j''$ but not to $j'$, since $pd$ and $bu$ are opposite and $p$ is already collinear to $u$. In particular, $p$ is not collinear to all points of $j'j''$ and not collinear to $l$. Suppose that $j$ were collinear to $j''$. Then $j$ would be collinear to all of $pu'$ and $u'$ would be collinear to $p$ and $j$, so to all of $pd$. That contradicts the fact that $u'$ and $d$ are opposite. That means $j'$ is the unique point on $j'j''$ that $j$ is collinear to. In particular $j$ is not collinear to $\ell$. 

\begin{center}
\begin{tikzpicture}
\begin{scope}[xscale=1.65, yscale=1.6]
\coordinate (q) at (1, 0, 0);  % Right vertex
\coordinate (b) at (0, 1, 0);  % Upper vertex
\coordinate (n) at (-1, 0, 0); % Left vertex
\coordinate (p) at (0, -1, 0); % Bottom vertex
\coordinate (u) at (0, 0, 0.8);  % Front vertex
\coordinate (d) at (0, 0, -0.8); % Back vertex

% Draw the edges of the octahedron
\draw[] (q) -- (b);
\draw[] (b) -- (n);
\draw[] (n) -- (p);
\draw[] (p) -- (q);
\draw[] (q) -- (u);
\draw[] (b) -- (u);
\draw[] (n) -- (u);
\draw[] (p) -- (u);
\draw[opacity=0.3] (q) -- (d);
\draw[opacity=0.3] (b) -- (d);
\draw[opacity=0.3] (n) -- (d);
\draw[opacity=0.3] (p) -- (d);

\coordinate (u') at (0, 0, 2.5);  % New Front Vertex
\draw[] (p) -- (u');
\draw[] (n) -- (u');
\draw[] (b) -- (u');
\draw[] (q) -- (u');

\coordinate (j) at ($ (p) + 0.8*(d) - 0.8*(p) $); 
\coordinate (j') at ($ (u) + 0.5*(b) - 0.5*(u) $); 
\coordinate (j'') at ($ (u') + 0.8*(p) - 0.8*(u') $); 
\coordinate (l) at ($ (j') + 1.2*(j'') - 1.2*(j') $); 
\draw[opacity=0.3] (j) -- (j') -- (l);

\end{scope}

\node at (q) [circle, fill, inner sep=1pt, label={[label distance=-1mm]above right:{\small \( q \)}}] {};
\node at (b) [circle, fill, inner sep=1pt, label={[label distance=-1mm]above:{\small \( b \)}}] {};
\node at (n) [circle, fill, inner sep=1pt, label={[label distance=-1mm]left:{\small \( n \)}}] {};
\node at (p) [circle, fill, inner sep=1pt, label={[label distance=-1mm]below:{\small \( p \)}}] {};
\node at (u) [circle, fill, inner sep=1pt, label={[label distance=-2.5mm]below left:{\small \( u \)}}] {};
\node at (d) [circle, fill, inner sep=1pt, label={[label distance=-2.5mm]above right:{\small \( d \)}}] {};

\node at (u') [circle, fill, inner sep=1pt, label={[label distance=-1mm]below :{\small \( u' \)}}] {};

\node at (j) [circle, fill, inner sep=1pt, label={[label distance=-1mm] right :{\small \( j \)}}] {};
\node at (j') [circle, fill, inner sep=1pt, label={[label distance=-1mm] above right :{\small \( j' \)}}] {};
\node at (j'') [circle, fill, inner sep=1pt, label={[label distance=-1mm] below left:{\small \( j'' \)}}] {};
\node at (l) [circle, fill, inner sep=1pt, label={[label distance=-1mm]below:{\small \( \ell \)}}] {};
\end{tikzpicture}
\end{center}

Since $p\equiv b\equiv j\equiv\ell\equiv p$, we can project in each case from one residue into the other. 

We define $\theta \colon \Res(p) \rightarrow \Res(p)$ as follows:
\[ \theta :=  \proj_{p}^{\ell} \circ \proj_{\ell}^{j} \circ \proj_{j}^{b} \circ \proj_{b}^{p}\]
%
%That means a point $x \in \Gamma$ maps to $[p, \proj_{[l, \proj_{[j, \proj_{[b, \proj_{px}(b)]}(j)]}(l)]}] \cap (p^{\perp} \cap b^{\perp})$.
From the discussion above follows that both $j$ and $\ell$ are collinear to all points of the lines $dq$ and $dn$. This readily implies that $\theta$ fixes every point on $dq\cup dn$. Also, since $j$ is contained in each plane through the line $pd$, we find that each plane through $pd$ is stabilised by $\theta$. Hence $\theta$ induces a root elation in $\Gamma$. We now check that $\theta(u)=u'$. 

We have $bu=\proj_b^p(pu)$. Since $j\perp j'\in bu$, we have $jj'=\proj_j^b(bu)$. Since $j'\perp \ell$, we have $j'\ell=\proj_\ell^j(jj')$. Finally, Since $p\perp j''\in j'\ell$, we have $pj''=\proj_p^\ell(j'\ell)$. But $j''\in pu'$, hence $pj'=u'$. It follows that $\theta(pu)=pu'$ and so $\theta(u)=u'$. This proved the lemma.
\end{proof}

\begin{coro}\label{GQMoufang}
Let ${\Delta}$ be a polar space of rank at least $3$ and let $\Gamma$ be the generalised quadrangle obtained from two opposite singular subspaces  $U,V$ of $\Delta$ of dimension $n-3$ by considering $U^\perp\cap V^\perp$. Then $\Gamma$ is a Moufang quadrangle. 
\end{coro}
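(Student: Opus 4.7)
My plan is to reduce the statement to the rank-$3$ case already handled by Lemmas~\ref{GQFirst} and~\ref{GQSecond}. For $n = 3$, the opposite singular subspaces $U$ and $V$ of dimension $n - 3 = 0$ are simply opposite points, and the corollary coincides with those two lemmas. For $n \geq 4$, I would invoke the setup recorded at the beginning of Section~\ref{GQ}: the quadrangle $\Gamma = U^{\perp} \cap V^{\perp}$ sits inside a rank-$3$ polar subspace $\Delta_{0}$ of $\Delta$, inside which there are two opposite points $p, b$ with $\Gamma = p^{\perp} \cap b^{\perp}$, and moreover $\Gamma \cong \Res_{\Delta_{0}}(p) \cong \Res_{\Delta_{0}}(b)$.

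Once that identification is in place, I would apply the two lemmas directly. Every apartment of the generalised quadrangle $\Gamma$ is a quadrilateral spanned by a polar frame $\{d, q, u, n\}$, and every root of $\Gamma$ is either of the first kind (removing two collinear polar-frame points) or of the second kind (removing a single polar-frame point). Lemma~\ref{GQFirst} constructs, for each root of the first kind and each target apartment containing it, an explicit elation of $\Gamma$ as a self-projectivity $\proj_{p}^{\ell} \circ \proj_{\ell}^{j} \circ \proj_{j}^{b} \circ \proj_{b}^{p}$ of $\Res_{\Delta_{0}}(p)$ that fixes the inside of the root pointwise and carries a prescribed starting apartment to the target. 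Lemma~\ref{GQSecond} supplies the analogous construction in the second case. Since $\Gamma$ carries the incidence induced from $\Delta_{0}$, these self-projectivities act as genuine automorphisms of $\Gamma$. Taken over all target apartments, they furnish the transitivity on apartments through each root that is required by the Moufang property.

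The main obstacle is the reduction itself: one needs to produce the rank-$3$ polar subspace $\Delta_{0}$ and the opposite points $p, b \in \Delta_{0}$, and verify that the canonical identification of $\Res_{\Delta_{0}}(p)$ with $\Res_{\Delta}(U) \cong \Gamma$ really sends $p^{\perp} \cap b^{\perp}$ onto $\Gamma$. A natural way to set this up is to pick a hyperplane $W$ of $U$ of dimension $n - 4$ and take $\Delta_{0} := \Res_{\Delta}(W)$, which by the proposition on residues is a polar space of rank $3$; the subspace $U$ then becomes a point $p$ of $\Delta_{0}$, and by item~(v) of the proposition on projections, $p$ has an opposite point $b$ in $\Delta_{0}$. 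One must then check, by unwinding the definitions, that $b$ can be chosen so that the $(n-3)$-dimensional singular subspaces of $\Delta$ containing $W$ that lie in both $U^{\perp}$ and the subspace representing $b$ correspond collinearity-preservingly to the points of $\Gamma$. Once this compatibility is secured, the corollary follows immediately from the two lemmas.
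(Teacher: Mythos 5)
Your proposal is correct and follows essentially the same route as the paper: the corollary is obtained by reducing $\Gamma=U^\perp\cap V^\perp$ to the rank-$3$ situation $\Gamma=p^\perp\cap b^\perp$ for two opposite points (the identification the paper simply asserts in the preamble of Section~\ref{GQ}) and then invoking \cref{GQFirst} and \cref{GQSecond} for the two kinds of roots. The only difference is that you spell out the reduction via the residue $\Res_\Delta(W)$ of a hyperplane $W$ of $U$, a verification the paper leaves implicit.
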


\begin{proof}
The claim follows from Lemma \ref{GQFirst} and \ref{GQSecond}.
\end{proof}

%%%%%%%%%%%%%%%%%%%%%%%%%%%%%%%%%%%%%%%%%%%%%%%%%%%%%%%%%%%%%%%%%%%%%%%%%
%%%%%%%%%%%%%%%%%%%%%%%%%%%%%%%%%%%%%%%%%%%%%%%%%%%%%%%%%%%%%%%%%%%%%%%%%
%%%%%%%%%%%%%%%%%%%%%%%%%%%%%%%%%%%%%%%%%%%%%%%%%%%%%%%%%%%%%%%%%%%%%%%%%
%%%%%%%%%%%%%%%%%%%%%%%%%%%%%%%%%%%%%%%%%%%%%%%%%%%%%%%%%%%%%%%%%%%%%%%%%
%%%%%%%%%%%%%%%%%%%%%%%%%%%%%%%%%%%%%%%%%%%%%%%%%%%%%%%%%%%%%%%%%%%%%%%%%
%%%%%%%%%%%%%%%%%%%%%%%%%%%%%%%%%%%%%%%%%%%%%%%%%%%%%%%%%%%%%%%%%%%%%%%%%

\subsection{Polar Spaces of Rank 3 are Moufang.}
 
In the following we show that the elations we found for generalised quadrangles in \ref{GQ} extend to elations of the ambient polar space. We will only show this in detail for the case that the latter has rank $3$. However, it can be done for arbitrary rank $n \geq 3$ and we will comment on that at the end of this section.

\subsubsection{First kind of half-apartment.}
Let $\Delta$ be a polar space of rank $3$. We want to show that for every line $dq$ in $\Delta$ through two arbitrary, collinear points $d$ and $q$, we can construct an automorphism $\phi \colon \Delta \rightarrow \Delta$ that fixes all planes through $dq$ pointwise, all lines through either $d$ or $q$ linewise and acts transitively on the points of the lines through either $d$ or $q$, which are not contained in a plane through $dq$.

\begin{lem}\label{lemma1}
Let $d,q$ be two collinear points in the polar space $\Delta$. Let $m,m'$ be two collinear points of $\Delta$ not collinear to $q$, but such that $d\in mm'$. Then there exists a unique permutation $\eta$ of the set of points $d^\perp\cup q^\perp$ with the following properties.
\begin{compactenum}[$(i)$]
\item All points of $d^\perp\cap q^\perp$ are fixed.
\item The collineation induced by $\eta$ in each plane $\pi$ containing $d$ or $q$, but not both, is a translation of $\pi$ with axis $\proj_{\pi}(dq)$ and centre $d$ or $q$, respectively. 
\item $\eta$ preserves collinearity. 
\end{compactenum}
\end{lem}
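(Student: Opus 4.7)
The plan is to first argue uniqueness, which dictates the construction, and then build $\eta$ by piecing together planar elations and extending them through the residues $\Res(d)$ and $\Res(q)$.

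\emph{Uniqueness.} Fix a plane $\pi$ through $d$ not containing $q$; by (ii), $\eta|_\pi$ must be a central elation of the projective plane $\pi$ with centre $d$ and axis $\proj_\pi(dq) = \pi \cap q^\perp$. In a Desarguesian projective plane (which all singular planes of a polar space of rank~$3$ are) such an elation is determined by the image of one non-fixed point. Since $m \not\perp q$ and $d \in mm'$, the line $mm'$ meets $q^\perp$ exactly at $d$, so whenever $mm' \subset \pi$ the point $m$ is off the axis and (iii) forces $\eta(m) = m'$; this pins down $\eta|_\pi$. Propagating through planes then determines $\eta$ on every line through $d$ coplanar with $mm'$. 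Because the residue $\Res(d)$ is a thick generalised quadrangle, hence of diameter two, the action can be transported to every remaining line through $d$ by passing through an intermediate line coplanar with $mm'$; a symmetric argument on $\Res(q)$ handles $q^\perp$. Together with (i) this pins down $\eta$.

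\emph{Existence.} Set $\eta = \id$ on $d^\perp \cap q^\perp$, and define $\eta|_{mm'}$ as the unique affine translation of $mm' \setminus \{d\}$ sending $m$ to $m'$. For each plane $\pi$ through $d$ (not $q$) containing $mm'$, let $\eta|_\pi$ be the unique planar elation with centre $d$ and axis $\pi\cap q^\perp$ restricting to this map on $mm'$. For a line $L$ through $d$ not coplanar with $mm'$ in $\Delta$, pick a line $L'$ through $d$ coplanar with both $L$ and $mm'$ (exists as $\Res(d)$ has diameter two), and define $\eta|_L$ by chaining through the planar elations in $\langle mm',L'\rangle$ and $\langle L',L\rangle$. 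Extend $\eta$ to lines through $q$ by the symmetric procedure (centre $q$, axis $\pi \cap d^\perp$); by construction the two extensions agree on $d^\perp \cap q^\perp$. Properties (i) and (ii) are then immediate, and for (iii) any two collinear points of $d^\perp \cup q^\perp$ lie in a common plane through $d$ or through $q$, on which $\eta$ acts as a planar collineation.

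\emph{Main obstacle.} The key difficulty is showing that the chained definition on a line $L$ through $d$ not coplanar with $mm'$ is independent of the choice of intermediate line $L'$ -- a Desargues-type coherence statement inside the generalised quadrangle $\Res(d)$, together with its analogue in $\Res(q)$. This is also exactly what is needed to verify collinearity preservation across the residues, and I would reduce it to the Moufang structure of the planar residues combined with the GQ elations already constructed in Lemmas~\ref{GQFirst} and~\ref{GQSecond}.
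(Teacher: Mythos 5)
Your skeleton matches the paper's: both proofs assemble $\eta$ from planar elations with centre $d$ (resp.\ $q$) and axis $\pi\cap q^\perp$ (resp.\ $\pi\cap d^\perp$), and both propagate the action using connectedness of the set of lines through $d$ opposite $dq$ in $\Res(d)$. But the step you defer as the ``main obstacle'' --- independence of the chained definition of $\eta|_L$ from the choice of intermediate line $L'$ --- is precisely the mathematical content of the lemma, and your proposal does not prove it. The suggested reduction to ``the Moufang structure of the planar residues combined with the GQ elations of Lemmas~\ref{GQFirst} and~\ref{GQSecond}'' does not close the gap: those elations live in a quadrangle $U^\perp\cap V^\perp$ transversal to $d^\perp$, whereas what you need is a compatibility statement among elations of \emph{distinct planes through $d$}, and this is not a formal consequence of each plane separately being Moufang. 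Without it, neither existence (well-definedness on lines not coplanar with $mm'$) nor property $(iii)$ across planes is established, and the uniqueness argument inherits the same unproven coherence.

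The paper closes exactly this gap with its claim $(*)$: fix one reference line $M$ through $q$ not collinear to $d$ (so $M$ is opposite every line of the plane $\beta=\<m,\proj_\beta(m)\>$ except the axis $L$), and for each line $K\ni d$ in $\beta$ define $\eta_K\colon M\to M$, $x\mapsto \proj_M^K(\eta(\proj_K^M(x)))$; a direct incidence computation, tracking the point $z_1=L\cap\proj_\pi(M)$, shows that $\eta_K$ does not depend on $K$. Every subsequent copy is then anchored to this single map on $M$ (and claim $(**)$ upgrades the copied action in a plane through $M$ to a genuine planar elation with centre $q$), so path-independence is automatic rather than something to be checked chain by chain. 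If you want to keep your line-to-line chaining inside $d^\perp$, you must prove the corresponding two-chain coincidence, which amounts to the same computation. A minor further point: as printed, the lemma omits the normalisation $\eta(m)=m'$, which you correctly supply, but it is not property $(iii)$ that forces it --- it has to be taken as part of the defining data, as the paper's construction implicitly does.
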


\begin{proof}Let $\pi$ be a given plane containing $dq$. 
Let $\beta$ be the plane spanned by $m$ and $L:=\proj_\pi(m)\ni d$.  Then $\pi$ contains also $m'$. Since projective planes inside polar spaces are Moufang, there exists an elation $\eta$ of $\beta$ with axis $L$ and center $d$, such that $\eta(m) = m'$. 

Now let $M$ be a line through $q$ not collinear to $d$. Then $M$ is opposite every line of $\beta$ except for $L$. For every line $K\subseteq \beta$, with $d\in K\neq L$, we define an action $\eta_K:M\to M:x\mapsto \proj_M^K(\eta(\proj_K^M(x)))$. We claim
\begin{itemize}
\item[(*)] The map $\eta_K$ is independent of $K$.
\end{itemize}
Indeed, let $K_1$ and $K_2$ be two lines in $\beta$ through $d$ distinct from $L$. Let $x_1\in K_1\setminus\{d\}$ be arbitrary. Set $y_1:=\proj_M(x_1)$. If $z_1=\proj_L(y_1)$, then $x_1z_1=\proj_\beta(y_1)$. It follows that $\proj_{K_2}(y_1)=x_1z_1\cap K_2=:x_2$, and hence $z_1\in\eta(x_1)\eta(x_2)$ as $\eta(z_1)=z_1$. Now $z_1=L\cap \proj_\pi(M)$ and as such, $z_1$ is also equal to $\proj_L(\eta_{K_1}(x_1)=\proj_L(\eta_{K_2}(x_2)$. It follows that $\proj_\beta(\eta_{K_1}(x_1)=\eta(x_1)z_1$, which by the above coincides with $\eta(x_2)z_1$. Hence $\eta_{K_1}(y_1)$ is collinear to $\eta(x_2)$ and with that we conclude that $\eta_{K_2}(y_1)=\eta_{K_1}(y_1)$. The claim is proved.

From now one, we will abbreviate the definition of $\eta_K$ above by the sentence ``we copy the action of $\eta$ on $K$ to $M$''. 

Thanks to (*), we can denote $\eta_K$ simply by $\eta$. Now let $\alpha$ be any plane through $M$. We claim that
\begin{itemize}
\item[(**)] There exists a unique elation in $\alpha$ with axis $A:=\proj_\alpha(dq)$ and centre $q$ extending the action of $\eta$. 
\end{itemize}
Indeed,  We can copy the action of $\eta$ on the line $mm'$ to any line of $\alpha$ through $q$, and we denote the corresponding maps by $\eta$. Then the arguments of the proof of (*) imply that, if $x$ and $y$ are two points of $\alpha$ with $q\notin xy$, then $xy$ and $\eta(x)\eta(y)$ intersect  $A$ in the same point. But this is exactly equivalent to showing that $\eta$ preserves collinearity, when extending $\eta$ to $A$ by stating that $\eta$ fixes each point of $A$. The claim now follows.

Let $\beta'$ be any plane intersecting $\beta$ in a line through $d$. Then (**) with the roles of $d$ and $q$ interchanged implies that there is a unique elation in $\beta$ with axis $\proj_{\beta'}(dq)$ agreeing with $\theta$ on the line $\beta\cap\beta'$. By (*), the action of that elation on any line of $\beta'$ through $d$ (not collinear to $q$)  copied to $M$ agrees with $\eta$. If we keep doing this procedure, then, since in $\Res(d)$, the geometry opposite $dq$ is connected (see hvmbook), the action of $\eta$ on $M$, copied on every line through $d$ opposite $M$, with additionally the identity on $d^\perp\cap q^\perp$, defines a permutation $\eta$ preserving collinearity and inducing a nontrivial elation in every plane through $d$ not containing $q$. 

Note that $\eta$, acting on $d^\perp$ as defined in the previous paragraph, is independent of the line $M$ through $q$ in $\alpha$, as follows from (*) with $d$ and $q$ interchanged. Likewise we can now extend $\eta$ from $\alpha$ to $q^\perp$, preserving collinearity, and the action on every line through $q$ not collinear to $d$ is copied from the action of $\eta$ on any line through $d$ not collinear to $q$. 

Since ``copying the action'' just means that collinear pairs of points from $d^\perp\times q^\perp$ are mapped to collinear pairs of points,
the lemma is proved. 
\end{proof}

We now make the connection between $\eta$ as defined in \cref{lemma1} and the axial elation defined in \cref{GQFirst}. 

\begin{lem}\label{lemma2}
Let $d,q$ be two collinear points in the polar space $\Delta$. Let $m,m'$ be two collinear points of $\Delta$ not collinear to $q$, but such that $d\in mm'$. Let $\eta$ be the unique permutation of $d^\perp\cup p^\perp$ with the properties mentioned in \emph{\cref{lemma1}}. Let $p$ and $b$ be two opposite points both collinear to $d,q$ and $m$. Let $\theta$ be the root elation of $p^\perp\cap b^\perp$ with root $(q,qd,d)$  mapping $m$ to $m'$. Then the action of $\theta$ on every line through $q$ or $d$ in $p^\perp\cap d^\perp$ coincides with the action of $\eta$ on that line. 
\end{lem}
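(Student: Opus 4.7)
The strategy is to recognise that $\eta$, restricted to the lines of $\Gamma$ through $d$ or $q$, behaves like a root elation of $\Gamma$ with root $(q,qd,d)$ sending $m$ to $m'$, and then to invoke the uniqueness part of the Moufang condition on $\Gamma$ (established in \cref{GQFirst}) to conclude that this restriction agrees with $\theta$.

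First I would check that $\eta$ is well-defined on the relevant lines, takes values in $\Gamma$, fixes the root, and stabilises each line of $\Gamma$ through $d$ or $q$. Every line of $\Gamma$ through $d$ (resp.\ $q$) lies in $d^\perp$ (resp.\ $q^\perp$), hence in the domain of $\eta$. Since $p,b\in d^\perp\cap q^\perp$, property~(i) of \cref{lemma1} gives $\eta(p)=p$ and $\eta(b)=b$, while (iii) then shows that $\eta$ preserves both $p^\perp$ and $b^\perp$; so the image of such a line lies in $\Gamma=p^\perp\cap b^\perp$. The line $qd$ is fixed pointwise by (i). For a line $L\neq qd$ of $\Gamma$ through $d$ I pick a singular plane $\pi\supseteq L$ in $\Delta$ with $q\notin\pi$; such a $\pi$ exists because at most one plane through $L$ contains $q$ (namely $\langle L,q\rangle$, when $L\subseteq q^\perp$), while the thickness of $\Delta$ forces at least three planes on $L$. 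By property~(ii), $\eta|_\pi$ is an axial elation of $\pi$ with centre $d$, and hence stabilises every line of $\pi$ through $d$, in particular $L$; the symmetric argument handles lines through $q$. Finally, $\eta(m)=m'$ is built into the construction on the initial plane $\beta$ in the proof of \cref{lemma1}.

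To conclude, I would appeal to Moufang uniqueness. By \cref{GQFirst}, the quadrangle $\Gamma$ is Moufang for the root $(q,qd,d)$, and the associated root group acts sharply transitively on apartments containing this root; consequently a root elation of $\Gamma$ with this root is uniquely determined by the image of a single point off the root, such as $m$. On the lines of $\Gamma$ through $d$ or $q$, the restriction of $\eta$ has all the defining properties of such a root elation: it fixes $qd$ pointwise, stabilises each such line, preserves collinearity by (iii), and sends $m$ to $m'$. Since $\theta$ has the same properties and the same image of $m$, it follows that $\eta$ and $\theta$ coincide on every line of $\Gamma$ through $d$ or $q$.

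\emph{Main obstacle.} The delicate point is that $\eta$ is only a priori defined on $d^\perp\cup q^\perp$, which is in general a proper subset of $\Gamma$. One therefore cannot simply declare $\eta|_\Gamma$ to be a collineation of $\Gamma$ and quote uniqueness directly. The argument must be carried out on the partial data of lines through $d$ or $q$, and the crucial conceptual step is that sharp transitivity of the root group on apartments forces equality on each such line as soon as there is equality at $m$; no global collineation of $\Gamma$ arising from $\eta$ is needed.
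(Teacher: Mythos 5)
Your first paragraph (that $\eta$ restricts to a well-defined, line-stabilising, collinearity-preserving partial map on the lines of $\Gamma$ through $d$ or $q$, fixing $qd$ pointwise and sending $m$ to $m'$) is fine, but the second half of the argument has a genuine gap, and it sits exactly at the point you flag as the main obstacle. Sharp transitivity of the root group is a statement about global collineations of $\Gamma$; since $\eta$ is only defined on $(d^\perp\cup q^\perp)\cap\Gamma$ and is not known at this stage to extend to a collineation of $\Gamma$, uniqueness of the root elation with $\theta(m)=m'$ says nothing about $\eta$. What you would actually need is that your listed partial properties (fix $qd$ pointwise, stabilise each line through $d$ or $q$, preserve collinearity, send $m\mapsto m'$) determine the map on every such line. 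But collinearity-preservation only propagates agreement at the single point $m$ to agreement on the orbit of $m$ under perspectivities between the stabilised lines, i.e.\ to points of the form $\proj_{N_k}\circ\cdots\circ\proj_{M_1}(m)$; there is no a priori reason this orbit exhausts a line (in a quadrangle of order $(s,t)$ with $t<s$ a perspectivity step reaches few points, and transitivity of the relevant projectivity group is not available at this stage without circularity). So ``equality at $m$ forces equality on each line'' is precisely the unproved step. A secondary issue: \cref{GQFirst} only proves transitivity of the root group on apartments containing the root, not sharp transitivity; the freeness would require the standard rigidity argument, which is nowhere established in the paper.

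The paper closes this gap by a computation your proposal omits: it takes the explicit projectivity $p\per b\per j\per \ell\per p$ from \cref{GQFirst} and computes its action on the \emph{whole} line $mm'$, obtaining $x\mapsto \ell x'\cap mm'$ with $x'=bx\cap K$ --- the classical composition of perspectivities describing the elation of the plane $\beta=\langle b,m,d\rangle$ with axis $bd$ and centre $d$ sending $m$ to $m'$. This identifies $\theta|_{mm'}$ with $\eta|_{mm'}$ point by point, not merely at $m$. Once agreement holds on all of $mm'$, the perspectivity $\proj_M\colon mm'\to M$ is a bijection for every line $M$ of $\Gamma$ through $q$, so copying transfers the agreement to all of $M$, and then to every line through $d$. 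In short: the essential content of the lemma is the identification of a residue-projectivity with a planar elation along an entire line, and your proposal replaces that computation with a uniqueness principle that does not apply to the partial map $\eta$.
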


\begin{proof}
Let $j$ be an arbitrary point on $pq\setminus\{p,q\}$. Let $\beta$ be the plane spanned by $b,m$ and $d$. Set $K:=\proj_\beta(j)$ and note $d\in K$. Define $\ell$ as the intersection of the line $bd$ with the line joining $m$ with $bm'\cap K$. With the proof of \cref{GQFirst}, we find that $p\per b\per j\per d\per p$ moves $pm$ to $pm'$.  Hence this defines $\theta$ in $p^\perp\cap b^\perp$.  The action of $\theta$ on the line $mm'$ is given by $x\mapsto lx'\cap mm'$, where $x'=bx\cap K$. It is well known that this is the restriction to $mm'$ of the elation of $\beta$ with axis $bd$, centre $d$ mapping $m$ to $mm'$. Hence $\theta(x)=\eta(x)$, for all $x\in mm'$. Since the actions on the lines of $p^\perp\cap d^\perp$ through $q$ are obtained by copying, for both $\eta$ and $\theta$, these action coincide. By copying these to the lines through $d$, the lemma follows.   
\end{proof}

The following is an immediate consequence of the foregoing. 
\begin{coro}\label{cor1}
Let $d,q$ be two collinear points in the polar space $\Delta$. Let $m,m'$ be two collinear points of $\Delta$ not collinear to $q$, but such that $d\in mm'$. Let $\eta$ be the unique permutation of $d^\perp\cup p^\perp$ with the properties mentioned in \emph{\cref{lemma1}}. Let $p$ and $b$ be two opposite points both collinear to $d,q$ and $m$. Let $\theta$ be the root elation of $\Res(p)$ with root $(pq,\<p,q,d\>,pd)$  mapping $pm$ to $pm'$. Let $\theta_b$ be the collineation of $\Res(b)$ defined by $L\mapsto \proj_b(\theta(\proj_p(L)))$. Then $\theta_b$ is the unique root elation of $\Res(b)$ for which the action on the lines through $b$ collinear to $q$ or $d$ coincides with the action of $\eta$. 
\end{coro}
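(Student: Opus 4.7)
The plan is to transport the identification of $\theta$ with $\eta$ on the quadrangle $p^\perp\cap b^\perp$ provided by \cref{lemma2} across the projection isomorphism between $\Res(p)$ and $\Res(b)$, and then invoke the Moufang property of $\Res(b)$ for uniqueness. Since $p$ and $b$ are opposite, the maps $\proj_p^b$ and $\proj_b^p$ are mutually inverse isomorphisms of generalised quadrangles between $\Res(b)$ and $\Res(p)$, so $\theta_b=\proj_b^p\circ\theta\circ\proj_p^b$ is the conjugate of the root elation $\theta$ by this isomorphism, hence itself a root elation. A direct inspection, using $q,d\in p^\perp\cap b^\perp$, shows that its root is the image of $(pq,\<p,q,d\>,pd)$ under $\proj_b^p$, namely $(bq,\<b,q,d\>,bd)$.

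Next I would verify that $\theta_b$ agrees with $\eta$ on every line through $b$ lying in $q^\perp$, the case $d^\perp$ being entirely symmetric. Given such a line $L$, let $m:=L\cap p^\perp$, the unique point of $L$ collinear to $p$; it lies in $p^\perp\cap b^\perp\cap q^\perp$. Unwinding the projection formulas yields $\proj_p^b(L)=pm$ and, since $\theta(m)\in p^\perp\cap b^\perp$, also $\proj_b^p(p\theta(m))=b\theta(m)$. Here I use that $\theta$ acts on lines through $p$ via its action on the corresponding points of the quadrangle ($x\mapsto\theta(px)\cap b^\perp$), as in the proof of \cref{GQFirst}. Thus $\theta_b(L)=b\theta(m)$. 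Now \cref{lemma2}, applied to the line $qm$ of the quadrangle through $q$, gives $\theta(m)=\eta(m)$; and \cref{lemma1} supplies both $\eta(b)=b$ (as $b\in d^\perp\cap q^\perp$) and the preservation of collinearity by $\eta$, so $b\eta(m)=\eta(bm)=\eta(L)$. Combining these yields $\theta_b(L)=\eta(L)$.

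Finally, uniqueness follows from the Moufang property of $\Res(b)$ proved in \cref{GQMoufang}: the root group associated with the root $(bq,\<b,q,d\>,bd)$ acts simply transitively on apartments containing that root, so a root elation with this root is determined by its image on any single line through $b$ outside the root. Since agreement with $\eta$ pins down such an image, there is at most one root elation of $\Res(b)$ with the prescribed action. I expect the only real difficulty is bookkeeping: one must keep straight the two incarnations of $\theta$---as a collineation of $\Res(p)$ and as a collineation of the ambient quadrangle $p^\perp\cap b^\perp$---as they are passed back and forth through the projection formula; no new geometric input beyond \cref{lemma1,lemma2} is needed.
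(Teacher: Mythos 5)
Your proposal is correct and follows exactly the route the paper intends: the paper gives no explicit proof of \cref{cor1} (it is stated as ``an immediate consequence of the foregoing''), and your argument is precisely the elaboration of that claim --- conjugating $\theta$ through the projection isomorphism, invoking \cref{lemma2} to identify the action with $\eta$ on lines through $b$ collinear to $q$ or $d$, and using rigidity of root elations for uniqueness. The only quibbles are cosmetic: you reuse the letter $m$ for the point $L\cap p^\perp$, which clashes with the $m$ of the statement, and uniqueness rests on the standard rigidity of root elations in a thick generalised quadrangle rather than on the transitivity statement of \cref{GQMoufang} itself.
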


The definition of the mapping $L\mapsto \proj_b(\theta(\proj_p(L)))$ will be abbreviated by \emph{$\theta$ is copied from $p$ to $d$}. 

The next lemma allows us to extend $\eta$ to the whole polar space $\Delta$ in an unambiguous way. 

\begin{lem}\label{lemma3}
Let, with previous notation, the point $x$ be collinear to a unique point $y$ of $dq$. Let $p$ and $b$ be two opposite points collinear to all of $q,b$ and $x$.  Then there exists a unique root elation $\eta_{p,b}$ in $p^\perp \cap b^\perp$ corresponding to the root $(d,dq,q)$ having the same action as $\eta$ on the points collinear to $q$ or $d$.  Moreover, $\eta_{p,b}(x)$ does not depend on $p$ and $b$. 
\end{lem}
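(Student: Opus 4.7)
The statement has two substantial pieces: existence and uniqueness of $\eta_{p,b}$, and the independence of $\eta_{p,b}(x)$ from the pair $(p,b)$.

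\emph{Existence and uniqueness.} Since $p$ and $b$ are collinear to $d,q,x$, the generalised quadrangle $\Gamma := p^\perp \cap b^\perp$ contains the line $dq$ and the point $x$. Pick any apartment of $\Gamma$ through $dq$, with remaining polar-frame points $u,n$ such that $u \in q^\perp$ and $n \in d^\perp$. The image $\eta(u)$ is defined by \cref{lemma1} and lies on $uq$. Applying \cref{GQFirst} with $m=u$ and $m'=\eta(u)$ produces a root elation $\eta_{p,b}$ of $\Gamma$ around the root through $dq$ sending $u$ to $\eta(u)$. The copying property~(*) from the proof of \cref{lemma1}, combined with \cref{lemma2} (invoked inside each of the planes $\langle p,d,q\rangle$ and $\langle b,d,q\rangle$), then shows that $\eta_{p,b}$ has the same action as $\eta$ on every line through $d$ or $q$ contained in $\Gamma$, hence on every point of $(d^\perp\cup q^\perp)\cap\Gamma$. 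Uniqueness is standard: in a Moufang quadrangle, a root elation is determined by its image on a single non-fixed chamber, and the above forces that image.

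\emph{Independence.} Let $(p',b')$ be any other valid pair. By inserting intermediate pairs of the form $(p,b'')$ with $b''$ opposite $p'$ and collinear to $d,q,x$, and exploiting the symmetry of the roles of $p$ and $b$ in the construction of $\eta_{p,b}$, it suffices to treat the case $p'=p$, $b\neq b'$.

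Via the canonical identifications $\Gamma\cong\Res(p)\cong\Gamma'$ that send a point $v$ to the line $pv$, both $\eta_{p,b}$ and $\eta_{p,b'}$ induce root elations of $\Res(p)$ with root $(pq,\langle p,q,d\rangle,pd)$. Each induced elation sends a line $pv$ (with $v\in d^\perp\cap p^\perp$) to $p\eta(v)$, because $\eta_{p,b}$ already agrees with $\eta$ on $(d^\perp\cup q^\perp)\cap\Gamma$ and the analogous statement holds for $\eta_{p,b'}$. By \cref{cor1}, there is only one such root elation of $\Res(p)$; call it $\theta_p$. Hence $\theta_p(px)=p\eta_{p,b}(x)=p\eta_{p,b'}(x)$ is one and the same line $L$ through $p$, intrinsic to the data $(p,d,q,x)$.

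\emph{The main obstacle} is to now pin down the point on $L$ in a way that does not involve $b$, since a priori $\eta_{p,b}(x)=L\cap b^\perp$ and $\eta_{p,b'}(x)=L\cap b'^\perp$ could differ. To overcome it, observe that the fixed point $y\in dq$ with $x\perp y$ is preserved by $\eta_{p,b}$, whence $\eta_{p,b}(x)\perp y$. By running the same analysis with the roles of $p$ and $y$ interchanged inside the auxiliary plane $\langle p,x,y\rangle$ (whose image under $\theta_p$ is the plane $\langle p,y,z\rangle$ spanned by $py$ and $L$), we extract a second intrinsic line $L'$ through $y$ on which $\eta_{p,b}(x)$ must lie, determined purely by $\eta$ acting on the lines through $y$ supplied by \cref{lemma1}. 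Because $L\neq py$ (else $x$ would be forced on $py\subset d^\perp\cap q^\perp$, contradicting the case $y\notin\{d,q\}$), the lines $L$ and $L'$ meet in the single point $\eta_{p,b}(x)$, which is therefore intrinsic, settling independence and finishing the proof.
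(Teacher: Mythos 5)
Your existence--uniqueness argument and your reduction to the case of a common point $p$ are both sound and consistent with the paper (whose final paragraph performs essentially the same reduction via intermediate pairs). The genuine gap is in the last step, where you must decide which point of the intrinsic line $L=\theta_p(px)$ is $\eta_{p,b}(x)$. You correctly identify this as the main obstacle, but the proposed resolution does not work: the ``second intrinsic line $L'$ through $y$'' is not supplied by \cref{lemma1}. The permutation $\eta$ is defined only on $d^\perp\cup q^\perp$, and in the relevant case (where $x$ is collinear to neither $d$ nor $q$) the one-or-all axiom gives $xy\cap d^\perp=\{y\}=xy\cap q^\perp$, so the line $xy$ meets $d^\perp\cup q^\perp$ only in $y$. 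Hence $\eta$ carries no information about where $xy$ is sent; the image $\eta_{p,b}(xy)$ is a line through $y$ that a priori depends on the quadrangle $p^\perp\cap b^\perp$, i.e.\ on $b$ --- which is precisely what you are trying to rule out. Moreover, as you observe yourself, the entire line $L$ lies in $y^\perp$ (the plane $\<p,x,y\>$ maps to the plane spanned by $py$ and $L$), so the condition ``collinear to $y$'' cannot isolate a point of $L$. At the decisive moment the line $L'$ is asserted rather than constructed, and the argument becomes circular.

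The paper closes this gap with a different device: it introduces a third point $o$ collinear to $d,q,x$ and opposite both $p$ and $b$, and considers the root elations $\theta_p,\theta_b,\theta_o$ of $\Res(p),\Res(b),\Res(o)$ agreeing with $\eta$. By \cref{cor1} these are related by copying, so the line $\theta_o(ox)$ meets both $\theta_p(px)=p\,\eta_{p,b}(x)$ and $\theta_b(bx)=b\,\eta_{p,b}(x)$. Since $p$ and $b$ are opposite, these two lines are not coplanar, and three pairwise intersecting lines in a polar space are either coplanar or concurrent; hence $\theta_o(ox)$ passes through their common point $\eta_{p,b}(x)$, giving $\eta_{p,o}(x)=\eta_{p,b}(x)$, after which the general case follows by chaining (with a separate short argument when $o\perp b$). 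If you wish to keep your two-line strategy, the second line must be $\theta_{b'}(b'x)=b'\,\eta_{p,b'}(x)$ itself, and you would then still need a triangulation argument of this kind to compare it with $b\,\eta_{p,b}(x)$.
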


\begin{proof}
The existence and uniques of $\eta_{p,b}$ follows from \cref{lemma2}. Now let $o$ be another point collinear to all of $q,d,x$ and assume first that $o$ is opposite both $p$ and $b$. Let $\theta_p,\theta_b$ and $\theta_o$ be the root elation of $\Res(p),\Res(b)$ and $\Res(o)$, respectively, for which the action on the lines through $p,b,o$, respectively, collinear to $q$ (or $d$) coincides with the action of $\eta$. By \cref{cor1}, $\theta_p$ copied from $p$ to $b$ is $\theta_b$; $\theta_b$ copied from $b$ to $o$ is $\theta_0$ and $\theta_p$ copied from $p$ to $o$ is again $\theta_o$. It follows that $\theta_o(ox)$ intersects both $\theta_p(px)$ and $\theta_b(bx)$ nontrivially. But $\theta_p(px)=p\eta_{p,b}(x)$ and $\theta_b(bx)=b\eta_{p,b}(x)$. Since these two lines are not contained in a plane, the line $\theta_0(0x)$, which intersects both, contains the intersection point $\eta_{p,b}(x)$. Hence $\eta_{p,o}(x)=\eta_{p,b}(x)=\eta_{o,b}(x)$. 

Now assume that $o$ is collinear to $b$. Note that $o$ is then opposite $p$. Note also that $o\in by$, as, denoting the plane spanned by $p,d,b$ by $\beta$, the line $by$ coincides with $\proj_\beta(x)$. Since $\eta_{p,b}(x)$ is also collinear to $y$, we find that $o\perp \eta_{p,b}(x)$. Hence $\eta_{p,b}(x)=\eta_{p,o}(x)$. 

Now let $p',b'$ be two other opposite points collinear to $p,b,x$. At most one of $p,b$ is collinear to $b'$, say $p$ is opposite $b'$. By the foregoing $\eta_{p,b}(x)=\eta_{p,b'}(x)=\eta_{p',b'}(x)$, and the lemma is proved.
\end{proof}

\begin{prop}\label{DefinitionOfRootElationFirstKindOfRootRank3} Let $d,q$ be two collinear points in the polar space $\Delta$. Let $m,m'$ be two collinear points of $\Delta$ not collinear to $q$, but such that $d\in mm'$. Using the notation of the previous lemmas,  the automorphism $\phi: \Delta \rightarrow \Delta$ defined by
\begin{equation*} \phi(x) :=\begin{cases}
  x,  & \text{if } x \text{ is in a plane with } dq. \\
  \eta(x), & \text{if } x \text{ collinear to either } q \text{ or } d \text {, not in a plane with } dq. \\
  \eta_{p,b}(x) & \text{if } x \text{ collinear to neither } d  \text{ nor } q\mbox{ ; }p\equiv b,  x,d,q\in p^\perp\cap b^\perp,
\end{cases} \end{equation*}
is a well-defined collineation on the whole polar space $\Delta$ that fixes all planes through $dq$ pointwise, stabilises all lines through either $d$ or $q$ and maps $m$ to $m'$.
\end{prop}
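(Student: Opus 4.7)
The plan is to establish the claimed properties of $\phi$ in turn. Well-definedness is almost immediate: the first and second clauses agree on their overlap, since $x$ in a plane containing $dq$ means $x\in d^\perp\cap q^\perp$, which is fixed by $\eta$ via \cref{lemma1}(i); the second and third clauses are defined on disjoint sets; and in the third clause, the independence of $\eta_{p,b}(x)$ from the choice of the pair $(p,b)$ is exactly \cref{lemma3}.

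That $\phi$ fixes every plane through $dq$ pointwise is the first clause of the definition. That $\phi$ stabilises every line through $d$ (and, symmetrically, every line through $q$) follows from \cref{lemma1}(ii): $\eta$ induces in every plane through $d$ but not through $q$ a translation with centre $d$, and such a translation stabilises every line of that plane through $d$. Finally, $\phi(m)=m'$ is built into the construction of $\eta$ in the proof of \cref{lemma1}, where the initial axial elation of the plane $\langle d,m,m'\rangle$ centred at $d$ is chosen to send $m$ to $m'$.

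The central task is to verify that $\phi$ preserves collinearity; once this is achieved, bijectivity of $\phi$ follows by running the construction with $m$ and $m'$ swapped, yielding a two-sided inverse. Let $x\perp y$ in $\Delta$. If both $x$ and $y$ lie in a common plane through $dq$, both are fixed and we are done. If both lie in $d^\perp\cup q^\perp$, the conclusion is \cref{lemma1}(iii). Otherwise at least one of the two, say $y$, lies in case~3, and my strategy is to find opposite points $p,b$ of $\Delta$ with $\{d,q,x,y\}\subseteq p^\perp\cap b^\perp$. Then the restriction of $\phi$ to the generalised quadrangle $\Gamma=p^\perp\cap b^\perp$ agrees, by the definition of $\phi$ together with \cref{lemma2} and \cref{cor1}, with the root elation $\eta_{p,b}$ of $\Gamma$ associated with the root $(q,qd,d)$; since $\eta_{p,b}$ is a collineation of $\Gamma$, we conclude $\phi(x)\perp\phi(y)$.

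The hardest step is producing such a pair $(p,b)$: one needs that $\{d,q,x,y\}^\perp$ contains two opposite points of $\Delta$. I would handle this by case analysis on the collinearity pattern of $x$ and $y$ with respect to $d$ and $q$, exploiting that $\Delta$ has rank $3$ so that $d^\perp\cap q^\perp$ is the union of all planes through $dq$ and contains many points from which opposite pairs can be extracted, using that every maximal singular subspace has an opposite and the standard projection machinery in a polar space. Once this existence is in place, the previously established properties show that $\phi$ is the desired collineation extending the root elations in the quadrangles.
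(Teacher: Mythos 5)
Your overall architecture (well-definedness from \cref{lemma3}, bijectivity by swapping $m$ and $m'$, reduction of everything to preservation of collinearity) matches the paper's, and your treatment of the easy cases is fine. But the step you yourself flag as the hardest one is not merely hard — in the critical configuration it is impossible, and this is a genuine gap. Suppose $x\perp y$ with both $x$ and $y$ collinear to neither $d$ nor $q$, and suppose the line $L=xy$ is collinear to a unique point $e\in dq\setminus\{d,q\}$ with $e\notin L$ (take $L$ a line, not through $e$, inside a plane meeting $dq$ exactly in $e$; all points of $L$ except $\proj_L(q)$ are of this form). Any generalised quadrangle $p^\perp\cap b^\perp$ containing $d,q,x,y$ is a subspace, hence contains the line $dq$ and in particular $e$; but then $e,x,y$ are three pairwise collinear points not on a common line, i.e.\ a triangle, which cannot exist in a generalised quadrangle. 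So there is no pair $(p,b)$ with $\{d,q,x,y\}\subseteq p^\perp\cap b^\perp$, and your reduction to the root elation $\eta_{p,b}$ of a single quadrangle breaks down exactly here. Your strategy does work for lines opposite $dq$, which is one of the two residual cases in the paper's proof, but it cannot cover the other.

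The paper closes this case by a different mechanism: for such a line $L$, the common projection $p'=\proj_L(q)=\proj_L(d)$ lies in $d^\perp\cap q^\perp$ and is therefore fixed, and for every other $x\in L$ the characterisation coming out of the proof of \cref{lemma3} — namely that $\phi(x)$ is the point with $\theta_{p'}(p'x)=p'\,\phi(x)$, where $\theta_{p'}$ is the root elation of $\Res(p')$ — shows that all the images $\phi(x)$ lie on the single line $\theta_{p'}(L)$ through $p'$. In other words, instead of placing $x$ and $y$ in one quadrangle of the form $p^\perp\cap b^\perp$, one works in the residue of a point of $L$ itself and uses the independence statement of \cref{lemma3} to see that the whole line is mapped into a line. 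To repair your argument you would need to add this (or an equivalent) case, and correspondingly split your ``find $(p,b)$'' step into the subcases where such a pair exists and where it provably does not.
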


\begin{proof}
We have already proved that $\phi$ is well-defined. Since interchanging the roles of $m$ and $m'$ clearly results in a two-sided inverse map, we deduce that $\phi$ is bijective. It suffices to prove that $\phi$ maps lines into lines. 

This is certainly obviously true for lines intersecting the line $bq$, and for lines collinear to either $q$ or $d$. There are two possibilities left.
\begin{compactenum}[$(i)$]
\item Let $L$ be a line collinear to a unique point $e\in dq\setminus\{d,q\}$. The projection $p$ of $q$ onto $L$ is collinear to $dq$. Then $p$ is fixed. For each point $x\in L\setminus\{p\}$, \cref{lemma3} ensures that $\phi(x)$ is contained in the line $\theta_p(L)$ (with the notation of the proof of \cref{lemma3}). 
\item Let $L$ be opposite $dq$. We can then pick two opposite points $p,b$ collinear to both $L$ and $qd$. Then, by \cref{lemma3} again, $\phi(L)=\eta_{p,b}(L)$ is a line.  
\end{compactenum} 
The proposition is proved.
\end{proof}
The arbitrariness of $m$ and $m'$ now yields the following consequence.
\begin{coro}\label{PolarSpaceFirst}
Let $\Delta$ be a polar space of rank $3$. Then all roots of the first kind are Moufang
\end{coro}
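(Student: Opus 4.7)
My plan is to deduce the corollary directly from \cref{DefinitionOfRootElationFirstKindOfRootRank3} by checking two things: (a) the collineation $\phi$ produced there is an elation for the root of the first kind determined by $dq$, and (b) varying the pair $(m,m')$ produces enough such elations to be transitive on the set of apartments containing that root.

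First I would fix an arbitrary root $\alpha$ of the first kind in $\Delta$, sitting inside an apartment $\mathcal{A}$ with polar frame $\{p_{\pm 1}, p_{\pm 2}, p_{\pm 3}\}$, where the two removed collinear vertices are $p_1, p_2$. Setting $d := p_{-1}$ and $q := p_{-2}$, the inside $\alpha^{+}$ consists precisely of the line $dq$ together with the two planes of $\mathcal{A}$ through $dq$ that avoid $p_1$ and $p_2$ (and their contents). Because $\phi$, as constructed in \cref{DefinitionOfRootElationFirstKindOfRootRank3}, fixes every plane through $dq$ pointwise, it fixes $\alpha^{+}$ pointwise; because $\phi$ stabilises every line through $d$ or $q$, it stabilises every singular subspace meeting $\alpha^{+}$ in a point off $dq$. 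Hence $\phi$ qualifies as an elation associated to $\alpha$.

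Next I would parametrise the apartments containing $\alpha$. Any such apartment $\mathcal{A}'$ agrees with $\mathcal{A}$ on $\alpha^{+}$, but replaces the frame vertices $p_1, p_2$ by a new pair of collinear points $\tilde m, \tilde m'$ lying respectively on a line through $d$ and on a line through $q$ (chosen so as to sit opposite the retained vertices of the polar frame). Applying \cref{DefinitionOfRootElationFirstKindOfRootRank3} with $m := p_1$ and $m' := \tilde m$ yields a collineation $\phi$ sending $p_1 \mapsto \tilde m$; since $\phi$ stabilises the line $qp_2$ and preserves collinearity with $\tilde m$, it is forced to send $p_2 \mapsto \tilde m'$. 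Therefore $\phi(\mathcal{A}) = \mathcal{A}'$, and the elation group generated by such $\phi$ is transitive on apartments containing $\alpha$.

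The main technical point to double-check is the identification of the apartments containing $\alpha^+$ with the pairs $(\tilde m,\tilde m')$ admissible as input to the proposition (in particular that $\tilde m,\tilde m'$ are automatically collinear with $d\in \tilde m\tilde m'$ and $\tilde m\not\perp q$, so that the proposition applies). Once this dictionary is in place, the arbitrariness of $(m,m')$ in \cref{DefinitionOfRootElationFirstKindOfRootRank3} gives the transitivity required by the Moufang condition, completing the proof.
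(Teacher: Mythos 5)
Your overall strategy is exactly the paper's: \cref{PolarSpaceFirst} is deduced from \cref{DefinitionOfRootElationFirstKindOfRootRank3} by observing that the pair $(m,m')$ there is arbitrary, so the constructed collineations act transitively on the apartments containing the root (the paper's proof is literally this one observation). Your elaboration of the two checks --- that $\phi$ is an elation for the root with axis $dq$, and that the apartments containing the root are parametrised by the admissible replacement pairs --- is the right way to flesh that out.

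However, the way you instantiate the proposition contains a concrete slip that makes the step fail as written. With $d=p_{-1}$, $q=p_{-2}$ and removed vertices $p_1,p_2$, the point $p_1$ is \emph{opposite} $d$ and \emph{collinear} to $q$; hence the choice $m:=p_1$ violates both hypotheses of \cref{DefinitionOfRootElationFirstKindOfRootRank3} (one needs $d\in mm'$ and $m\not\perp q$). The same mix-up appears in ``$\phi$ stabilises the line $qp_2$'': $q=p_{-2}$ and $p_2$ are opposite, so $qp_2$ is not a line. The dictionary is: an apartment containing the root must contain the lines $p_1q$ and $p_2d$ of the root, so the replacement of $p_1$ lies on $p_1q$ (a line through $q$) and the replacement of $p_2$ lies on $p_2d$ (a line through $d$); it is the latter pair to which the proposition applies. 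Taking $m:=p_2$ and $m'$ the new vertex on $p_2d$, one then deduces the image of $p_1$ from the stabilised line $p_1q$ and preservation of collinearity, exactly as you do. A second, smaller point: the assertion that $\phi$ stabilises \emph{every} singular subspace meeting $\alpha^{+}$ in a point off $dq$ is too strong and in fact fails --- the line $p_3p_1$ meets $\alpha^{+}$ in $p_3$ but is mapped to the line $p_3\phi(p_1)\neq p_3p_1$. What is needed, and what $\phi$ does satisfy, is the condition on panels carrying two chambers of the root, which here only involves the planes through $dq$ (stabilised), the two planes of $\alpha^{+}$ (fixed pointwise) and the lines through $d$ or $q$ (stabilised). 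With these repairs your argument coincides with the intended proof.
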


%%%%%%%%%%%%%%%%%%%%%%%%%%%%%%%%%%%%%%%%%%%%%%%%%%%%%%%%%%%%%%%%%%%%%%%%%
%%%%%%%%%%%%%%%%%%%%%%%%%%%%%%%%%%%%%%%%%%%%%%%%%%%%%%%%%%%%%%%%%%%%%%%%%
%%%%%%%%%%%%%%%%%%%%%%%%%%%%%%%%%%%%%%%%%%%%%%%%%%%%%%%%%%%%%%%%%%%%%%%%%
%%%%%%%%%%%%%%%%%%%%%%%%%%%%%%%%%%%%%%%%%%%%%%%%%%%%%%%%%%%%%%%%%%%%%%%%%
%%%%%%%%%%%%%%%%%%%%%%%%%%%%%%%%%%%%%%%%%%%%%%%%%%%%%%%%%%%%%%%%%%%%%%%%%
%%%%%%%%%%%%%%%%%%%%%%%%%%%%%%%%%%%%%%%%%%%%%%%%%%%%%%%%%%%%%%%%%%%%%%%%%

\subsubsection{Second kind of half-apartment.}

\begin{lem}\label{PolarSpaceSecond}
Let $\Delta$ be a polar space of rank $3$. Then all roots of the second kind are Moufang.
\end{lem}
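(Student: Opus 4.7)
The plan is to follow the architecture of the proof of \cref{PolarSpaceFirst}, using \cref{GQSecond} in place of \cref{GQFirst}. Let $\alpha$ be a root of the second kind in $\Delta$, arising from an apartment $\mathcal{A}$ with polar frame $\{b,p,p_1,p_{-1},p_2,p_{-2}\}$ in which $p$ is the removed vertex (so $b$ is the apex opposite $p$). Its interior $\alpha^+$ consists of $b$, the four neighbours $p_{\pm 1},p_{\pm 2}$, the four edges $bp_{\pm i}$, and the four apartment planes $\<b,p_i,p_j\>$. Given any second point $p'$ opposite $b$ and collinear to all of $p_{\pm 1},p_{\pm 2}$, I would construct an elation $\phi$ of $\Delta$ fixing $\alpha^+$ pointwise and sending $p$ to $p'$.

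The starting observation is that, for any apartment vertex $p_i$, the desired elation $\phi$ must restrict to a root elation of the second kind on the GQ residue $\Res_\Delta(p_i)$: it fixes three of the four apartment points of $\Res(p_i)$ (namely $p_ib$ and the two $p_ip_k$ for $k \in \{\pm 1,\pm 2\} \setminus \{\pm i\}$) together with the two apartment planes through $p_ib$ pointwise, and sends the remaining apartment point $p_ip$ to $p_ip'$. This is precisely the configuration handled by \cref{GQSecond}, which produces a unique candidate $\theta_i$ for the local action. I would then prove analogues of \cref{lemma1}, \cref{lemma2}--\cref{cor1}, and \cref{lemma3}: first, construct a permutation $\eta$ on $\bigcup_i p_i^\perp$ that fixes the four apartment planes pointwise, acts as a planar elation of prescribed axis and centre on every other plane through an apartment vertex, and preserves collinearity (built by choosing the action of $\eta$ on a single line and ``copying'' it via the projective-plane Moufang condition inside $\Delta$); second, identify $\eta$ with the GQ elation $\theta_i$ on each residue, so that $\theta_i$ can be transported between residues of opposite points via $L \mapsto \proj_o(\theta_i(\proj_{p_i}(L)))$; and third, extend $\phi$ to a point $x$ far from the apex by applying $\eta$ inside the GQ determined by any opposite pair of points collinear to both $x$ and the apex structure, and proving independence from that choice by comparison through a third opposite pair. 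Assembling these in an analogue of \cref{DefinitionOfRootElationFirstKindOfRootRank3} would give the required collineation, with line-preservation verified by the same case analysis as in the first-kind case.

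The main obstacle I anticipate is well-definedness in the extension step. The axis of a second-kind root is not a single line but the full cone of four planes through the apex $b$, so one must coordinate \emph{four} simultaneous copies of \cref{GQSecond}, one in each residue $\Res(p_{\pm 1})$ and $\Res(p_{\pm 2})$, rather than the two copies of \cref{GQFirst} used in the first-kind case. The compatibility of these four local actions is where the specific geometry of the \cref{GQSecond} construction --- the pair of collinear lines $dq$ and $dn$ through $d$, together with the auxiliary points $j,j',j'',\ell$ used to define the four projections --- would enter essentially, and verifying that the four $\theta_i$ glue consistently is the technical heart of the argument.
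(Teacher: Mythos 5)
Your outline is essentially the paper's own proof: the paper likewise installs the central elation of \cref{GQSecond} in the residue of each point of the equatorial structure (it uses \emph{all} points of the two fixed planes $\alpha,\beta$ through the apex $o$, not just the four apartment vertices), transports these between residues of opposite points by the same copying technique as in the first-kind case, proves independence of the choices, and then defines the image of an arbitrary point via its projections onto $\alpha$ and $\beta$ before checking that lines map to lines. The compatibility problem you single out as the technical heart is handled there precisely by anchoring the local elations at all points of $\alpha\cup\beta$, so that consistency propagates along the two connected planes instead of having to be negotiated between four isolated residues.
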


\begin{proof}
Let $\alpha$ and $\beta$ be two planes intersecting in a point $o$. Let $L$ and $M$ be two opposite lines in $\alpha$ and $\beta$ respectively and let $p$ and $p'$ be two points opposite $o$ in $L^{\perp} \cap M^{\perp}$. We aim to construct a collneation $\phi$ of $\Delta$ stabilising all lines through $o$, fixing $\alpha\cup\beta$ pointwise, and mapping $p$ to $p'$. Let $x$ be a point on $L$. Let $\eta_x$ be the root elation of $\Res(x)$ fixing all lines of $\alpha$ through $x$, fixing al lines through $x$ that intersect $\beta$, stabilizing all planes through $ox$ and mapping $xp$ to $xp'$. We can copy that action onto every point $y\in\beta$ opposite $x$, meaning, we define $\eta_y$ as the unique collineation of $\Res(y)$ mapping any line $K$ through $y$ to $\proj_y(\eta_x(\proj_xK))$. Now let $z\alpha$ be such that $z\neq x$ and the line $xz$ does not contain $o$. Let $\eta_{z,y}$ be the action of $\eta_y$ copied onto $z$, for any $y\in\beta$ opposite $z$. Clearly $\eta_{z,y}$ is always a root elation in $\Res(z)$ with axes $\alpha$ and $\<z,\proj_\beta(z)\>$ and centre $oz$. But also, each plane through the line $xz$ has the same image under $\eta_x$ and under $\eta_{z,y}$. Since a root elation in $\Res(z)$ with centre $oz$ is determined by the image of any plane not through $oz$, we conclude that $\eta_z:=\eta_{z,y}$ is independent of $y$. Interchanging the roles of $x$ and $z$, we conclude that there exists a unique root elation $\eta_a$ of $\Res(a)$, for each point of $\alpha$ such that the set of all such root elations, together with all $\eta_b$ for $b\in\beta$, is closed under copying from points to opposite points.

Now let $w$ be an arbitrary point of $\Delta$ opposite $o$. We define  $\phi(w)=w$, for $w\in\alpha\cup\beta$. If $w\notin\alpha\cup\beta$, then set $A:=\proj_\alpha(w)$ and $B:=\proj_\beta(w)$. Select $a\in A$ and let $\pi$ be the plane spanned by $w$ and $A$. Set $\pi'=\eta_a(\pi)$. Select $b\in B$ end let $\sigma$ be the plane spanned by $B$ and $w$. By copying, $\sigma':=\eta_b(\sigma)$ intersects $\pi'$ in a unique point, which we define to be $\phi(w)$. Note that, by copying, $\eta_a(aw)=a\phi(w)$, for all $a\in A$ and $\eta_b(bw)=b\phi(w)$, for all $b\in B$. 

Now let $w\perp o$. There are two possibilities. First suppose $A:=\proj_\alpha(w)$ and $B:=\proj_\beta(w)$ are not coplanar. For an arbitrary point $a\in A\setminus\{o\}$, we define $\phi(w)=\eta_a(aw)\cap \<w,B\>$. Similarly as in the previous paragraph we find that this is independent of $a\in A\setminus\{o
\}$, and also $\phi(w)=\eta_b(bw)\cap\<w,A\>$, for all $b\in B\setminus\{o\}$. 

Secondly, suppose $A$ and $B$ as defined above are coplanar. Then for each $a\in A\setminus\{o\}$, we have $\eta_a(aw)=aw$ and we define $\phi(w)=w$.

Hence in all cases we have that, if $c\in \alpha\cup\beta\setminus\{o\}$ is collinear to $w$, then $\eta_c(cw)=s\phi(c)$. 

It remains to show that lines  are mapped to lines. If a line $K$ is not collinear to $o$, then it is collinear to unique distinct points $a\in \alpha$ ad $b\in\beta$, and the images under $\phi$ of the points of $K$ are all contained in te line $\eta_a(\<a,K\>)\cap\eta_b(\<b,K\>)$. Applying the inverse mapping (obtained by interchanging the roles of $p$ and $p'$), we see that $\phi(K)$ is a line. 

Finally, let $K$ be collinear to $o$. Each point $w\in K$ is mapped onto a point $\phi(w)\in ow$. Consider a plane $\pi\not\ni o$ through $K$. By the foregoing, the image $\phi(\pi)$ is contained in a plane which intersects $\<o,K\>$ in the set $\{\phi(w)\mid w\in K\}$. Since distinct planes sharing at least two points intersect in lines, we see that $\phi(K)$ is a line.

This completes the proof of the proposition. 
\end{proof}

\begin{coro}\label{PolarMoufang}
Let $\Delta$ be a polar space of rank $3$. Then $\Delta$ is Moufang.
\end{coro}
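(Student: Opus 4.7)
The plan is essentially bookkeeping: by the very definition of the Moufang property for a polar space, we need to verify the transitivity condition for the root group on the set of apartments through $\alpha$ for \emph{every} root $\alpha$ of $\Delta$. So the strategy is simply to enumerate the possible shapes of roots and invoke the two preceding results.

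First, I would recall the classification of roots given in the definition preceding the example picture: a root in an apartment of a rank $3$ polar space is obtained from a polar frame either by removing two collinear points $p_i,p_j$ together with the singular subspaces containing both (a root \emph{of the first kind}), or by removing a single point $p_i$ together with the singular subspaces through it (a root \emph{of the second kind}). Every half-apartment of $\Delta$ falls into exactly one of these two classes.

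Next, for each class I would cite the corresponding lemma. \cref{PolarSpaceFirst} says that for any line $dq$ and any pair of collinear points $m,m'$ not collinear to $q$ with $d\in mm'$, the collineation $\phi$ constructed in \cref{DefinitionOfRootElationFirstKindOfRootRank3} fixes every plane through $dq$ pointwise, stabilises every line through $d$ or $q$ and sends $m$ to $m'$; since $m,m'$ parametrise the apartments containing a root of the first kind bounded by $dq$, this gives the required transitivity. Similarly, \cref{PolarSpaceSecond} produces, for any configuration $(\alpha,\beta,o,L,M,p,p')$ encoding a root of the second kind with inside determined by the two planes $\alpha,\beta$ meeting in $o$, a collineation of $\Delta$ fixing $\alpha\cup\beta$ pointwise, stabilising all lines through $o$ and mapping $p$ to $p'$; again this is exactly transitivity on the set of apartments extending the given root.

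Since every root of $\Delta$ is of the first or the second kind and in both cases the required root group acts transitively on the completing apartments, $\Delta$ satisfies the Moufang property, concluding the proof. The only potential pitfall is purely terminological, namely making sure that the collineations produced in \cref{PolarSpaceFirst} and \cref{PolarSpaceSecond} indeed lie in the Moufang root group as defined earlier (fixing $\alpha^{+}$ pointwise and stabilising every singular subspace meeting $\alpha^{+}$), but this is built into the conclusions of those two lemmas, so no further work is required.
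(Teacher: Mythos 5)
Your proposal is correct and follows exactly the paper's own argument: the proof of \cref{PolarMoufang} consists precisely of observing that every root is of the first or second kind and invoking \cref{PolarSpaceFirst} and \cref{PolarSpaceSecond} respectively. Your additional remarks on why those two results deliver the required transitivity are accurate elaboration of the same one-line deduction.
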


\begin{proof}
The claim follows from \cref{PolarSpaceFirst} and \cref{PolarSpaceSecond}
\end{proof}

\subsection{Corollaries.}

\begin{coro}\label{projectivities}
For polar spaces of rank $3$, every root elation in each point residual can be written as an even self-projectivity of length $4$.
\end{coro}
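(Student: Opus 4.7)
The plan is to read the statement off directly from the explicit constructions already given in \cref{GQFirst} and \cref{GQSecond}. First, identify the point residual $\Res_\Delta(p)$ of a point $p$ in the polar space $\Delta$ with the generalised quadrangle $\Gamma = p^\perp \cap b^\perp$ for some chosen point $b$ opposite $p$. Every root of $\Gamma$ is either of the first or of the second kind, so every root elation of $\Res_\Delta(p)$ falls under one of the two preceding constructions.

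In both of these lemmas, the elation is produced in precisely the form
\[ \theta = \proj_{p}^{\ell} \circ \proj_{\ell}^{j} \circ \proj_{j}^{b} \circ \proj_{b}^{p} \]
for suitable pairwise-opposite points $b, j, \ell$ with $p \equiv b \equiv j \equiv \ell \equiv p$. By the definition of self-projectivity given just before \cref{GQ}, this is exactly a self-projectivity of $\Res_\Delta(p)$ along the cyclic sequence $p \per b \per j \per \ell \per p$ of length $s = 4$, which is even.

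The only point requiring verification is that these constructions cover every root elation, not just one representative per root. For a root of the first kind, the proof of \cref{GQFirst} uses $u' \in uq \setminus \{u\}$ as a free parameter, and as $u'$ varies one obtains every nontrivial element of the root group, since that group acts sharply transitively on the apartments containing the root; the trivial element is captured by the degenerate choice $u' = u$, which forces $\ell = b$, so that the middle pair $\proj_\ell^j \circ \proj_j^b$ collapses to the identity and the whole composition reduces to $\proj_p^b \circ \proj_b^p = \id$. The second-kind case is handled identically via \cref{GQSecond}. No genuine obstacle arises; the corollary is a direct unpacking of the two preceding lemmas, with the only subtlety being this coverage statement for the full root group.
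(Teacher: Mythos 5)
Your proposal is correct and follows essentially the same route as the paper, which likewise just reads the corollary off from the fact that the elations in \cref{GQFirst} and \cref{GQSecond} are built as compositions of four projections $\proj_{p}^{\ell} \circ \proj_{\ell}^{j} \circ \proj_{j}^{b} \circ \proj_{b}^{p}$. Your extra paragraph on coverage (sharp transitivity of the root group on apartments containing the root, so that varying $u'$ exhausts the root group) is a point the paper leaves implicit, and is a welcome addition rather than a divergence.
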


\begin{proof}
This follows from the fact that, in our proofs, we constructed every elation by projecting from the residue of a point $b$, to that of a point $p$, to that of a point $l$, to that of a point $j$, and back to the residue of $b$ for an appropriate choice of points $b$, $p$, $l$ and $j$ in each case.
\end{proof}

\begin{coro}\label{fixpointstructureelations}
Root elations associated to roots $\alpha$ of the first kind of polar spaces of rank $3$ not only stabilise all planes through the pointwise fixed line that is the intersection $L$ of the two planes in $\alpha^{+}$, but fix them pointwise. They also stabilise all lines through either of the two points of $\alpha^+$ on  $L$. Root elation associated to roots $\beta$ of the second kind stabilise all lines through the central point of $\beta^+$. 
\end{coro}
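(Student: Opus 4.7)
The plan is to read both claims off the explicit construction of the root elations in Proposition \ref{DefinitionOfRootElationFirstKindOfRootRank3} and Lemma \ref{PolarSpaceSecond}, since both statements are really assertions about the domains on which the constructed maps act as the identity or as a line-stabiliser.

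First I would handle the root $\alpha$ of the first kind. By definition of $\phi$ in Proposition \ref{DefinitionOfRootElationFirstKindOfRootRank3}, the first clause states $\phi(x)=x$ for every point $x$ lying in a plane with $L=dq$. Since any plane through $L$ is such a plane, this gives the stronger ``fix pointwise'' statement immediately. For the claim about lines through $d$ and $q$, I would fall back on Lemma \ref{lemma1}: inside each plane $\beta\not\supseteq dq$ through $d$ (respectively $q$), the induced action of $\eta$ is a translation with centre $d$ (respectively $q$), which by definition stabilises every line through $d$ (respectively $q$). Lines through $d$ or $q$ that are contained in a plane through $L$ are fixed pointwise by the first clause of $\phi$, and lines through $d$ or $q$ meeting both of $d^\perp\setminus q^\perp$ and $q^\perp\setminus d^\perp$ do not occur because $d\perp q$. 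So every line through $d$ or $q$ is stabilised.

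Next I would turn to the root $\beta$ of the second kind. Here the central point of $\beta^+$ is the point $o=\alpha\cap\beta$ of Lemma \ref{PolarSpaceSecond}. Inspecting the construction there, I see that for any point $w$ collinear to $o$ the map $\phi$ was defined so that either $\phi(w)=w$ (when $\proj_\alpha(w)$ and $\proj_\beta(w)$ are coplanar) or $\phi(w)\in ow$ (in the remaining case, since $\phi(w)$ was taken to lie on $\eta_a(aw)$ with $a\in A=\proj_\alpha(w)\subseteq o^\perp$, and the relevant copying step keeps $\phi(w)$ on $ow$). Hence every point $w\in o^\perp$ has $\phi(w)\in ow$, so every line through $o$ is stabilised. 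Points fixed in $\alpha\cup\beta$ and the behaviour away from $o^\perp$ play no role here.

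The main thing to check carefully, and the only step that is more than bookkeeping, is the second-kind assertion that $\phi(w)\in ow$ in the generic case $A,B$ not coplanar: one must verify that the defining point $\eta_a(aw)\cap \langle w,B\rangle$ actually lies on $ow$. I would prove this by observing that $\eta_a$ is a root elation of $\Res(a)$ with centre $oa$, hence stabilises the plane $\langle o,a,w\rangle$ through $oa$, mapping the line $aw$ to another line of that plane through $a$; intersecting with $\langle w,B\rangle$, which also contains $o$ (as $o\in B$) and meets the plane in the line $ow$, forces the image to lie on $ow$. Once this is in place, both statements of the corollary follow directly from the constructions cited.
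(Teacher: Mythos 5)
Your proposal is correct and takes essentially the same approach as the paper, whose proof consists of the single sentence that the corollary follows from the proofs of \cref{DefinitionOfRootElationFirstKindOfRootRank3} and \cref{PolarSpaceSecond}; you simply make explicit the bookkeeping the paper leaves implicit. Your extra verification that $\phi(w)\in ow$ in the second-kind case (via the two planes $\<o,a,w\>$ and $\<w,B\>$ meeting in the line $ow$) is sound and is in fact the one point the paper also relies on without spelling out.
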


\begin{proof}
This follows from the proofs of \cref{DefinitionOfRootElationFirstKindOfRootRank3} and \cref{PolarSpaceSecond}.
\end{proof}

\subsubsection{Rank $n \geq 3$} \ \vspace{2mm}

For the induction, suppose we showed that the elations $\theta$ extend in the case that the polar space $\Delta$ has rank $n-1$. Suppose $\tilde{\Delta}$ is a polar space of rank $n$ containing $\Delta$. Let $\mathcal{A}$ be some apartment of $\tilde{\Delta}$ spanned by a polar frame $\{p_{-n},p_{-n+1},\ldots,p_{-1},p_1,p_2,\ldots,p_n\}$. Since we know that the extension works for $\Delta$, we know that $\theta$ is determined for all singular subspaces spanned by the points $\{p_{-n+1},p_{-n+2},\ldots,p_{-1},p_1,p_2,\ldots,p_{n-1}\}$. Now, with the exact same techniques as before, we can copy the action of $\theta$ to the surrounding singular subspaces.

\newpage
\section{Constructions for Elations of $\mathsf{H_3}$}

Let $\Delta$ be a Lie incidence geometry of type $\mathsf{H_3}$. As before, we would like to construct elations for rank $2$ residues of $\Delta$ first. We will focus on the rank $2$ residues that are generalised pentagons. For every generalised pentagon $\Gamma$ in $\Delta$, we can find two opposite points $b$ and $p$, such that $b^{\perp} \cap p^{\pperp} = \Gamma$. An elation $\theta$ of $\Gamma$ will fix the inside $\alpha^{+}$ of a half-apartment $\alpha$ of $\Gamma$ pointwise and stabilise all lines through two points on the inside of that half-apartment. Since we are generally interested in the Moufang property, we would like to construct $\theta$ in a way, such that $\alpha$ gets mapped to a different apartment $\alpha'$. Note that this is only possible, if $\Delta$ correspond to a thick, spherical building of type $\mathsf{H_3}$.

\begin{prop}\label{elationsH3}
Suppose thick, spherical buildings of type $\mathsf{H_3}$ existed. Let $\Delta$ be a Lie incidence geometry corresponding to such a building and let $p$ and $b$ be two opposite points of $\Delta$. Let $\Sigma$ be an apartment of the generalised pentagon $b^{\perp} \cap p^{\pperp}$ with points $\{b_0, b_1, b_2, b_3, b_4\}$, such that $b_i \perp b_{i+1}$ for $i \in \Z / 5\Z$. Then we can construct an elation that fixes the lines $b_0b_1$ and $b_1b_2$ pointwise, stabilises all lines through $b_0$ and $b_1$ in $b^{\perp} \cap p^{\pperp}$ and maps $\Sigma$ to an apartment $\Sigma'$ with points $\{b_0, b_1, b_2, b_3', b_4'\}$, such that $b_2 \perp b_3' \perp b_4' \perp b_0$, $b_3 \neq b_3'$ and $b_4 \neq b_4'$. Since $p$, $b$, $\Sigma$ and $\Sigma'$ were arbitrary, this would show, that rank $2$ residues of $\Delta$ are Moufang.
\end{prop}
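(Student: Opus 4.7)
The plan is to mimic the constructions from Lemmas~\ref{GQFirst} and~\ref{GQSecond}, replacing the polar-space projection maps with the projection maps $\proj_{p}^{b}$ for type $\mathsf{H_3}$ introduced in~\ref{notationprojection}. The target elation $\theta$ should be realised as an even self-projectivity of length $4$ around a cycle of pairwise opposite points $p \per b \per j \per \ell \per p$, so that the construction is structurally identical to the one in \cref{GQFirst}: set
$$\theta := \proj_{p}^{\ell} \circ \proj_{\ell}^{j} \circ \proj_{j}^{b} \circ \proj_{b}^{p}\,,$$
and define a collineation of $\Gamma = b^{\perp} \cap p^{\pperp}$ via $x \mapsto \theta(px) \cap b^{\perp}$. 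Note that Corollary~\ref{projectivities} for the polar-space case is an immediate consequence of such a description, which is one reason to insist on this specific shape for $\theta$.

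First, I would select the auxiliary points $j$ and $\ell$. The point $j$ should be chosen opposite $b$ (and eventually opposite $\ell$) so that under $\proj_{j}^{b} \circ \proj_{b}^{p}$ the lines $pb_0b_1$ and $pb_1b_2$ (viewed in $\Res(p)$) are stabilised; geometrically this requires $j$ to see the lines $b_0b_1$ and $b_1b_2$ in the correct distance relation using \ref{notationprojection}. The point $\ell$ should then be chosen opposite $j$ and $p$ so that $\proj_{p}^{\ell} \circ \proj_{\ell}^{j}$ closes the cycle while continuing to stabilise those two lines, but with the net effect of rotating the line $pb_3$ to $pb_3'$. Picking such $j$ and $\ell$ is a matter of exhibiting explicit points in appropriate planes through $b_0, b_1, b_2$, analogous to how $j$ was chosen on $pq$ and $\ell$ was chosen as a projection in \cref{GQFirst}.

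Next, I would verify the three properties: \emph{(i)} pointwise fixing of $b_0b_1$ and $b_1b_2$, by showing that every point of these lines is collinear, at the right distance, to each of $p,b,j,\ell$, so that each single factor of $\theta$ fixes it; \emph{(ii)} stabilisation of all lines of $\Gamma$ through $b_0$ or $b_1$, by tracing their image through the four projections and invoking the uniqueness clauses in the definition of $\proj^{b}_{p}$ for planes and lines through a point; and \emph{(iii)} $\theta$ maps $\Sigma$ to $\Sigma'$, by explicitly chasing $b_3$ and $b_4$ through the four projections (using the defining formulas of the $\mathsf{H_3}$-projection to write down the intermediate lines and points), exactly in the spirit of the final paragraph of the proof of \cref{GQFirst}.

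The main obstacle is not algebraic but geometric: unlike in a polar space, in a putative $\mathsf{H_3}$-geometry the projection $\proj_{p}^{b}$ threads through a distance-$2$ intermediate point and the structure of $p^{\perp}\cap b^{\pperp}$ is a generalised pentagon, so verifying that lines and planes behave as needed under each factor forces one to use the full list of opposition relations recorded in the Observation preceding~\ref{notationprojection}. The existence and unicity of the required auxiliary points $j,\ell$ with all the demanded distance-$2$ and opposition incidences relative to an \emph{arbitrary} second apartment $\Sigma'$ is the actual crux; indeed, the obstruction is sharp enough that it feeds directly into~\ref{nonexist}, yielding the non-existence of thick spherical buildings of type~$\mathsf{H_3}$. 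So in practice I expect the bulk of the work to be a careful case analysis showing that, assuming such a building exists, the choices of $j$ and $\ell$ are forced and consistent, and that then the four-step projectivity $\theta$ automatically satisfies (i)--(iii).
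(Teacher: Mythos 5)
Your outline matches the paper's strategy at the top level -- the elation is indeed realised as an even self-projectivity of length $4$ around a cycle of consecutive opposite points (the paper uses $b \per p \per d \per q \per b$) -- but what you have written is a plan, not a proof, and the parts you defer are precisely where all the work lies. First, the auxiliary points are not where you guess them to be: the paper's second auxiliary point $d$ is an arbitrary point on the line $bb_1$ (hence collinear to $b$ and opposite $p$), not a point ``in appropriate planes through $b_0,b_1,b_2$''; and the fourth point $q$ is obtained only after a lengthy intermediate construction (the points $p_4'$, $d_3$, $d_4$ determined by distance-$2$ conditions, then an auxiliary apartment on $b_3'$ and $d_4$ producing $q_0,q_3,q_4,q_4'$, then a second apartment to locate $q_1,q_2$, with $q$ pinned down as the unique point of $q_4q_4'$ at distance $2$ from $b_1$). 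None of this is exhibited or even sketched in your proposal, and without it there is no map to verify. Second, your step (ii) is not a routine chase: stabilisation of the lines of $\Gamma$ through $b_1$ does follow by tracking planes through $bb_1$, but for the lines through $b_0$ the paper has to build a second projectivity $\psi'$ out of a ``dual'' elation $\theta'$ based at $p$, and then argue that $\psi'\circ\theta^{-1}$ fixes an apartment of $\Res(b)$ and is therefore the identity. That identity argument is an essential idea that your uniqueness-of-projection reasoning does not supply.

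There is also a logical error in your last paragraph. You suggest that an ``obstruction'' to choosing the auxiliary points is what feeds into \cref{nonexist}. That inverts the argument: under the standing hypothesis that a thick building of type $\mathsf{H_3}$ exists, the auxiliary points \emph{can} be constructed and the four-step projectivity \emph{is} a nontrivial root elation; the pentagonal residues are therefore Moufang, and the contradiction in \cref{nonexist} comes from the external fact (Tits, Th\'eor\`eme 1 of \cite{Tits:76}) that Moufang generalised pentagons do not exist -- not from any failure internal to the construction.
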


\begin{proof}
Let $p_{0}, \dots, p_{4}$ be points that span a pentagon in $p^{\perp} \cap b^{\pperp}$, such that $p_{i}$ is collinear to $b_{i-1}b_{i}$ for $i \in \Z / 5\Z$. Let $b_4'$ be a point on $b_0b_4$ not equal to $b_0$ or $b_4$ and let $b_3'$ be a point in $b^{\perp} \cap p^{\pperp}$ that is collinear to $b_2$ and $b_4'$. We want to define a map $\theta: \Res(b) \rightarrow \Res(b)$, such that $\theta$ fixes $b_0b_1$ and $b_1b_2$ pointwise, stabilises all lines through $b_0$ and $b_1$ and moves $b_3$ to $b_3'$ and $b_4$ to $b_4'$. Let $d$ be an arbitrary point on $bb_1$ not equal to $b$ or $b_1$.

The point $d$ has distance $2$ to all points of a unique line through $p_0$ in $\<p, p_0, p_4 \>$ that intersects $pp_4$ in some point $p_4'$. Every point on $bb_1$ has distance $2$ to $p_3$. With that, $d$ has distance $2$ to all points of the line $p_3p_4'$ in $\<p, p_3, p_4\>$. We define $d_4 := d^{\perp} \cap (p_0p_4')^{\perp}$ and $d_3 := d^{\perp} \cap (p_3p_4')^{\perp}$.

We consider the projectivity $\varphi: b \per p \per d$ and observe the following, using the notation introduced in \ref{notationprojection}:

\begin{minipage}{0.55\textwidth}
\begin{align*}
bb_2 \mapsto \< p, p_2, p_3 \> &\mapsto db_2 \\
\< b, b_1, b_2 \> \mapsto pp_2 &\mapsto \< d, b_1, b_2\> = \< b, b_1, b_2 \>  \\
bb_1 \mapsto \< p, p_1, p_2 \> &\mapsto db_1 \\
\< b, b_0, b_1 \> \mapsto pp_1 &\mapsto \< d, b_0, b_1\> = \< b, b_0, b_1 \>  \\
bb_0 \mapsto \< p, p_0, p_1 \> &\mapsto db_0 \\
\< b, b_0, b_4 \> \mapsto pp_0 &\mapsto \< d, b_0, d_4 \>  \\
bb_4 \mapsto \< p, p_0, p_4 \> &\mapsto dd_4 \\
\< b, b_3, b_4 \> \mapsto pp_4 &\mapsto \< d, d_3, d_4 \>  \\
bb_3 \mapsto \< p, p_3, p_4 \> &\mapsto dd_3  \\
\< b, b_2, b_3 \> \mapsto pp_3 &\mapsto \< d, b_2, d_3 \>  
\end{align*}
\end{minipage}
\begin{minipage}[c][8cm][c]{0.35\textwidth}
\begin{center}
\begin{tikzpicture}[baseline={(0, 3.2)}]
  \begin{pgflowlevelscope}{\pgftransformscale{0.65}}
    \def\pentagonradius{3} 
    \def\angle{72} 
    \def\rotationP{-90} 
    \def\rotationB{-54} 
    \def\verticalshift{8} 

    % First pentagon with points p_0, p_1, p_2, p_3, p_4
    \begin{scope}[scale=1, yscale=0.6]
        % Draw the pentagon vertices with rotation
        \foreach \i in {0,1,2,3,4} {
            \coordinate (P\i) at (\i*\angle+\rotationP:\pentagonradius);
        }
        \draw[thick] (P0) -- (P1) -- (P2) -- (P3) -- (P4) -- cycle;

        \node at (P0) [circle, fill, inner sep=1pt, label={[label distance=-1mm]below left:{\small \( p_0 \)}}] {};
        \node at (P1) [circle, fill, inner sep=1pt, label={[label distance=-1mm]below:{\small \( p_1 \)}}] {};
        \node at (P2) [circle, fill, inner sep=1pt, label={[label distance=-1mm]above left:{\small \( p_2 \)}}] {};
        \node at (P3) [circle, fill, inner sep=1pt, label={[label distance=-1mm]above left:{\small \( p_3 \)}}] {};
        \node at (P4) [circle, fill, inner sep=1pt, label={[label distance=-1mm]below left:{\small \( p_4 \)}}] {};        

        % Draw lines connecting the vertices
        \draw[thick] (P0) -- (P1);
        \draw[thick] (P1) -- (P2);
        \draw[thick] (P2) -- (P3);
        \draw[thick] (P3) -- (P4);
        \draw[thick] (P4) -- (P0);

        % Draw p
       \coordinate (P) at (0,-5);
       \node at (P) [circle, fill, inner sep=1pt, label={below:{\small \( p \)}}] {};
    \end{scope}

    % Second pentagon with points b_0, b_1, b_2, b_3, b_4, shifted vertically
    \begin{scope}[scale=1, yscale=0.6, shift={(0,\verticalshift)}]
        % Draw the pentagon vertices with rotation
        \foreach \i in {0,1,2,3,4} {
            \coordinate (B\i) at (\i*\angle+\rotationB:\pentagonradius);
        }
        \draw[thick] (B0) -- (B1) -- (B2) -- (B3) -- (B4) -- cycle;

        \node at (B0) [circle, fill, inner sep=1pt, label={[label distance=0mm]above left:{\small \( b_0 \)}}] {};
        \node at (B1) [circle, fill, inner sep=1pt, label={[label distance=0mm]right:{\small \( b_1 \)}}] {};
        \node at (B2) [circle, fill, inner sep=1pt, label={[label distance=-1mm]above left:{\small \( b_2 \)}}] {};
        \node at (B3) [circle, fill, inner sep=1pt, label={[label distance=0mm]left:{\small \( b_3 \)}}] {};
        \node at (B4) [circle, fill, inner sep=1pt, label={[label distance=-1mm] above right:{\small \( b_4 \)}}] {};    

        % Draw lines connecting the vertices
        \draw[thick] (B0) -- (B1);
        \draw[thick] (B1) -- (B2);
        \draw[thick] (B2) -- (B3);
        \draw[thick] (B3) -- (B4);
        \draw[thick] (B4) -- (B0);

        % Draw b
       \coordinate (B) at (0,5);
       \node at (B) [circle, fill, inner sep=1pt, label={above:{\small \( b \)}}] {};

       % Draw d
       \coordinate (D) at ($ (B) + 0.5*(B1) - 0.5*(B) $); 
       \node at (D) [circle, fill, inner sep=1pt, label={above:{\small \( d \)}}] {};    
     
    \end{scope}

    % Draw lines between p_i and b_i
     \draw[thick, opacity=0.3] (P0) -- (B0) -- (P1) -- (B1) -- (P2) -- (B2) -- (P3) -- (B3) -- (P4) -- (B4) -- (P0);

    % Draw lines between p and p_i
    \draw[thick, opacity=0.3] (P0) -- (P); 
    \draw[thick, opacity=0.3] (P1) -- (P); 
    \draw[thick, opacity=0.3] (P2) -- (P); 
    \draw[thick, opacity=0.3] (P3) -- (P); 
    \draw[thick, opacity=0.3] (P4) -- (P);

    % Draw lines between b and b_i
    \draw[thick, opacity=0.3] (B0) -- (B); 
    \draw[thick, opacity=0.3] (B1) -- (B); 
    \draw[thick, opacity=0.3] (B2) -- (B); 
    \draw[thick, opacity=0.3] (B3) -- (B); 
    \draw[thick, opacity=0.3] (B4) -- (B);
 
    % Draw lines between d and b_0 and b_2
    \draw[thick, opacity=0.3] (B0) -- (D) -- (B2);
    
    % Basic set up over    

    % pp_0 maps to a plane through db_0. 
    % p_0 collinear to line of d^perp through b_0
    % <p, p_0, p_4> maps to a line of that plane through db_0
    % we denote by d_4 the unique point collinear to a line of <p, p_0, p_4> through p_0
    % we denote by p_4' the point d_4^perp \cap pp_4
    
    \coordinate (D4) at (-0.8, 4.0); 
    \node at (D4) [circle, fill, inner sep=1pt, label={[label distance=-1mm]above right:{\small \( d_4 \)}}] {};
    \draw[thick, opacity=0.3] (B0) -- (D4) -- (D);
    
    \coordinate (P4') at ($ (P) + 0.5*(P4) - 0.5*(P) $); 
    \node at (P4') [circle, fill, inner sep=1pt, label={[label distance=-1mm]below left:{\small \( p_{4}' \)}}] {};
    \draw[thick, opacity=0.3] (P0) -- (P4') -- (P3);    

    \draw[thick, opacity=0.3] (P0) -- (D4) -- (P4');    
    
    % pp_3 maps to a plane through db_2. 
    % p_3 collinear to line of d^perp through b_2
    % <p, p_3, p_4> maps to a line of that plane through db_2
    % we denote by d_3 the unique point collinear to a line of <p, p_3, p_4> through p_3
    % we observe that p_4' in d_3^perp \cap pp_4    
    
    \coordinate (D3) at (-1.2, 5.4); 
    \node at (D3) [circle, fill, inner sep=1pt, label={[label distance=-1mm]above:{\small \( d_3 \)}}] {};
    \draw[thick, opacity=0.7] (B2) -- (D3) -- (D4);    
    \draw[thick, opacity=0.7] (B0) -- (D4);   
    \draw[thick, opacity=0.3] (D3) -- (D);    
    \draw[thick, opacity=0.3] (P3) -- (D3) -- (P4');    

    % Define b_3' and b_4'
    
    \coordinate (B3') at (-2,5.2);
    \node at (B3') [circle, fill, inner sep=1pt, label={[label distance=-1mm]left:{\small \( b_{3}' \)}}] {};    
    
    \coordinate (B4') at ($ (B4) + 0.4*(B0) - 0.4*(B4) $); 
    \node at (B4') [circle, fill, inner sep=1pt, label={[label distance=-1.5mm]above right:{\small \( b_{4}' \)}}] {};    
    
    \draw[thick, opacity=0.7] (B4') -- (B3') -- (B2);    
    \draw[thick, opacity=0.3] (B4') -- (B) -- (B3');          
      
    % Draw D3'
    %\coordinate (D3') at ($ (D3) + 0.2*(D) - 0.2*(D3) $); 
    %\node at (D3') [circle, fill, inner sep=1pt, label={[label distance=-1mm]above:{\small \( d_{3}' \)}}] {};          
    %\draw[thick, opacity=0.3] (B2) -- (D3') -- (D4);     
\end{pgflowlevelscope}
\end{tikzpicture}
\end{center}
\end{minipage}

Let $x$ be some arbitrary point on $b_0b_1$. Then $bx$ maps to a plane through $pp_1$ and then  to $dx$. Let $y$ be some arbitrary point on $b_1b_2$. Then $by$ maps to a plane through $pp_2$ and then to $dy$.
\newpage
We observe that the points $b_3'$ and $d_4$ are opposite and consider an apartment $\Sigma$ containing $b_3'$, $b$, $b_2$, $b_4'$, $b_0$, $d$, $d_3$ and $d_4$. The points $b_4'$ and $d_4$ have distance $2$ and are both collinear to $b_0$. Let $q_0$ be another point on the line $(b_4')^{\perp} \cap d_4^{\perp}$ through $b_0$. We label the leftover point of the pentagon $d_4^{\perp} \cap (b_3')^{\pperp}$ in $\Sigma$ by $q_4$. Let $q_4'$ be the point in $\Sigma$ collinear to $q_0$, $q_4$, $b_3'$ and $b_4'$ and $q_3$ be the point collinear to $q_4'$, $q_4$, $b_2$, $b_3'$ and $d_3'$. With that, all points of $\Sigma$ are labeled. The point $b_1$ is on the line $bd$. Let $q$ be the unique point on $q_4q_4'$ with distance $2$ to $b_1$.

We see that $q$ has distance $3$ to $b$ and $d$, distance $2$ to $b_0$, $b_1$, $b_2$, $b_3'$, $b_4'$, $d_3$ and $d_4$ and distance $1$ to $q_0$, $q_4$, $q_4'$, $q_3$, $q_2$ and $q_1$. We consider an apartment containing $\<q, q_0, q_4'\>$ and $\< b, b_1, b_2\>$. We denote the point in that apartment that is collinear to $q$, $q_0$, $b_0$ and $b_1$ by $q_1$ and the point that is collinear to $q$, $q_3$, $b_1$ and $b_2$ by $q_2$. 

\begin{center}
\resizebox{15cm}{!}{
\begin{tikzpicture}
    
    \def\pentagonradius{3}
    \def\angle{72} 
    \def\rotationP{-90} 
    \def\rotationB{-54} 
    \def\verticalshift{8} 

    \begin{scope}[scale=1, yscale=0.6]
        % Draw the pentagon vertices with rotation
        \foreach \i in {0,1,2,3,4} {
            \coordinate (Q\i) at (\i*\angle+\rotationP:\pentagonradius);
        }
        \draw[thick] (Q0) -- (Q1) -- (Q2) -- (Q3) -- (Q4) -- cycle;

            \node at (Q0) [circle, fill, inner sep=1pt, label={[label distance=-1mm]0*72+18+\rotationP:\( q_4 \)}] {};
            \node at (Q1) [circle, fill, inner sep=1pt, label={[label distance=-1mm]1*72+18+\rotationP:\( q_0 \)}] {};
            \node at (Q2) [circle, fill, inner sep=1pt, label={[label distance=-1mm]2*72+18+\rotationP:\( b_0 \)}] {};
            \node at (Q3) [circle, fill, inner sep=1pt, label={[label distance=-1mm]3*72+18+\rotationP:\( d \)}] {};
            \node at (Q4) [circle, fill, inner sep=1pt, label={[label distance=-1mm]4*72+18+\rotationP:\( d_3 \)}] {};

        % Draw lines connecting the vertices
        \draw[thick] (Q0) -- (Q1);
        \draw[thick] (Q1) -- (Q2);
        \draw[thick] (Q2) -- (Q3);
        \draw[thick] (Q3) -- (Q4);
        \draw[thick] (Q4) -- (Q0);

       \coordinate (Q) at (0,-5);
       \node at (Q) [circle, fill, inner sep=1pt, label={below:\( d_4 \)}] {};

    \end{scope}

    % Second pentagon with points b_0, b_1, b_2, b_3, b_4, shifted vertically
    \begin{scope}[scale=1, yscale=0.6, shift={(0,\verticalshift)}]
        % Draw the pentagon vertices with rotation
        \foreach \i in {0,1,2,3,4} {
            \coordinate (B\i) at (\i*\angle+\rotationB:\pentagonradius);
        }

        \draw[thick] (B0) -- (B1) -- (B2) -- (B3) -- (B4) -- (B0);
        
            \node at (B0) [circle, fill, inner sep=1pt, label={[label distance=-1mm]0*72+18+\rotationB:\( q_4' \)}] {};
            \node at (B1) [circle, fill, inner sep=1pt, label={[label distance=-1mm]1*72+18+\rotationB:\( b_4' \)}] {};            
            \node at (B2) [circle, fill, inner sep=1pt, label={[label distance=-1mm]2*72+18+\rotationB:\( b \)}] {};
            \node at (B3) [circle, fill, inner sep=1pt, label={[label distance=-1mm]3*72+18+\rotationB:\( b_2 \)}] {};
            \node at (B4)[circle, fill, inner sep=1pt, label={[label distance=-1mm]4*72+18+\rotationB:\( q_3 \)}] {};            

        % Draw lines connecting the vertices
        \draw[thick] (B0) -- (B1);
        \draw[thick] (B1) -- (B2);
        \draw[thick] (B2) -- (B3);
        \draw[thick] (B3) -- (B4);
        \draw[thick] (B4) -- (B0);

        % Draw b
       \coordinate (D) at (0,5);
       \node at (D) [circle, fill, inner sep=1pt, label={above:\( b_3' \)}] {};
   
    \end{scope}

    % Draw lines between p_i and b_i
     \draw[thick, opacity=0.3] (Q0) -- (B0) -- (Q1) -- (B1) -- (Q2) -- (B2) -- (Q3) -- (B3) -- (Q4) -- (B4) -- (Q0);

    % Draw lines between p and p_i
    \draw[thick, opacity=0.3] (Q0) -- (Q); 
    \draw[thick, opacity=0.3] (Q1) -- (Q); 
    \draw[thick, opacity=0.3] (Q2) -- (Q); 
    \draw[thick, opacity=0.3] (Q3) -- (Q); 
    \draw[thick, opacity=0.3] (Q4) -- (Q);

    % Draw lines between p and p_i
    \draw[thick, opacity=0.3] (B0) -- (D); 
    \draw[thick, opacity=0.3] (B1) -- (D); 
    \draw[thick, opacity=0.3] (B2) -- (D); 
    \draw[thick, opacity=0.3] (B3) -- (D); 
    \draw[thick, opacity=0.3] (B4) -- (D);
    
       \coordinate (caption) at (0,-4);
       \node at (caption) [label={below: \Large The apartment containing $\<b_3', b, b_2\>$ and $\<d_4, q_0, q_4\>$}] {};    
    
%%%%%%%%%%%%%%%%%%%%%%%%%%%%%%%%%%%%%%%%%%%%%%%%%%%%

\begin{scope}[scale=1, yscale=0.7, xshift=12cm]
        % Draw the pentagon vertices with rotation
        \foreach \i in {0,1,2,3,4} {
            \coordinate (Q\i) at (\i*\angle+\rotationP:\pentagonradius);
        }
        \draw[thick] (Q0) -- (Q1) -- (Q2) -- (Q3) -- (Q4) -- cycle;

            \node at (Q0) [circle, fill, inner sep=1pt, label={[label distance=-1mm]0*72+18+\rotationP:\( q_0 \)}] {};
            \node at (Q1) [circle, fill, inner sep=1pt, label={[label distance=-1mm]1*72+18+\rotationP:\( q_1 \)}] {};
            \node at (Q2) [circle, fill, inner sep=1pt, label={[label distance=-1mm]2*72+18+\rotationP:\( q_2 \)}] {};
            \node at (Q3) [circle, fill, inner sep=1pt, label={[label distance=-1mm]3*72+18+\rotationP:\( q_3 \)}] {};
            \node at (Q4) [circle, fill, inner sep=1pt, label={[label distance=-1mm]4*72+18+\rotationP:\( q_4' \)}] {};

        % Draw lines connecting the vertices
        \draw[thick] (Q0) -- (Q1);
        \draw[thick] (Q1) -- (Q2);
        \draw[thick] (Q2) -- (Q3);
        \draw[thick] (Q3) -- (Q4);
        \draw[thick] (Q4) -- (Q0);

       \coordinate (Q) at (0,-5);
       \node at (Q) [circle, fill, inner sep=1pt, label={below:\( q \)}] {};

    \end{scope}

    % Second pentagon with points b_0, b_1, b_2, b_3, b_4, shifted vertically
    \begin{scope}[scale=1, yscale=0.6, shift={(0,\verticalshift)}, xshift=12cm]
        % Draw the pentagon vertices with rotation
        \foreach \i in {0,1,2,3,4} {
            \coordinate (B\i) at (\i*\angle+\rotationB:\pentagonradius);
        }

        \draw[thick] (B0) -- (B1) -- (B2) -- (B3) -- (B4) -- (B0);
        
            \node at (B0) [circle, fill, inner sep=1pt, label={[label distance=-1mm]0*72+18+\rotationB:\( b_{0} \)}] {};
            \node at (B1) [circle, fill, inner sep=1pt, label={[label distance=-1mm]1*72+18+\rotationB:\( b_1 \)}] {};            
            \node at (B2) [circle, fill, inner sep=1pt, label={[label distance=-1mm]2*72+18+\rotationB:\( b_2 \)}] {};
            \node at (B3) [circle, fill, inner sep=1pt, label={[label distance=-1mm]3*72+18+\rotationB:\( b_{3}' \)}] {};
            \node at (B4) [circle, fill, inner sep=1pt, label={[label distance=-1mm]4*72+18+\rotationB:\( b_4' \)}] {};            

        % Draw lines connecting the vertices
        \draw[thick] (B0) -- (B1);
        \draw[thick] (B1) -- (B2);
        \draw[thick] (B2) -- (B3);
        \draw[thick] (B3) -- (B4);
        \draw[thick] (B4) -- (B0);

        % Draw b
       \coordinate (D) at (0,5);
       \node at (D) [circle, fill, inner sep=1pt, label={above:\( b \)}] {};
   
    \end{scope}

    % Draw lines between p_i and b_i
     \draw[thick, opacity=0.3] (Q0) -- (B0) -- (Q1) -- (B1) -- (Q2) -- (B2) -- (Q3) -- (B3) -- (Q4) -- (B4) -- (Q0);

    % Draw lines between p and p_i
    \draw[thick, opacity=0.3] (Q0) -- (Q); 
    \draw[thick, opacity=0.3] (Q1) -- (Q); 
    \draw[thick, opacity=0.3] (Q2) -- (Q); 
    \draw[thick, opacity=0.3] (Q3) -- (Q); 
    \draw[thick, opacity=0.3] (Q4) -- (Q);

    % Draw lines between p and p_i
    \draw[thick, opacity=0.3] (B0) -- (D); 
    \draw[thick, opacity=0.3] (B1) -- (D); 
    \draw[thick, opacity=0.3] (B2) -- (D); 
    \draw[thick, opacity=0.3] (B3) -- (D); 
    \draw[thick, opacity=0.3] (B4) -- (D);

       \coordinate (caption') at (12,-4);
       \node at (caption') [label={below: \Large The apartment containing $\<b, b_1, b_2\>$ and $\<q, q_0, q_4'\>$}] {};    
\end{tikzpicture}
}
\end{center}

We set $\theta := b \per p \per d \per q \per b$ and claim that $\theta$ has the desired properties. Indeed, we observe the following.

\vspace{-4mm}
\begin{align*}
bb_2 \mapsto \< p, p_2, p_3 \> &\mapsto db_2 \mapsto \<q, q_2, q_3\> \mapsto bb_2\\
\< b, b_1, b_2 \> \mapsto pp_2 &\mapsto \< d, b_1, b_2\> = \< b, b_1, b_2 \>  \mapsto qq_2 \mapsto \< b, b_1, b_2 \> \\
bb_1 \mapsto \< p, p_1, p_2 \> &\mapsto db_1 \mapsto \<q, q_1, q_2\> \mapsto bb_1\\
\< b, b_0, b_1 \> \mapsto pp_1 &\mapsto \< d, b_0, b_1\> = \< b, b_0, b_1 \>  \mapsto qq_1 \mapsto \<b, b_0, b_1\>\\
bb_0 \mapsto \< p, p_0, p_1 \> &\mapsto db_0 \mapsto \<q, q_0, q_1\> \mapsto bb_0\\
\< b, b_0, b_4 \> \mapsto pp_0 &\mapsto \< d, b_0, d_4 \> \mapsto qp_0 \mapsto \< b, b_0, b_4 \> \\
bb_4 \mapsto \< p, p_0, p_4 \> &\mapsto dd_4 \mapsto \< q, p_0, q_4 \> \mapsto bb_4'\\
\< b, b_3, b_4 \> \mapsto pp_4 &\mapsto \< d, d_3, d_4 \> \mapsto qq_4 \mapsto \< b, b_3', b_4' \> \\
bb_3 \mapsto \< p, p_3, p_4 \> &\mapsto dd_3 \mapsto \< q, q_3, q_4 \> \mapsto bb_3'  \\
\< b, b_2, b_3 \> \mapsto pp_3 &\mapsto \< d, b_2, d_3 \> \mapsto qq_3 \mapsto \< b, b_2, b_3'\>  
\end{align*}

\vspace{4mm}

Let $x$ be some arbitrary point on $b_0b_1$. Then $bx$ maps to a plane through $pp_1$, to $dx$, to a plane through $qp_1$ and back to $bx$.

Let $y$ be some arbitrary point on $b_1b_2$. Then $by$ maps to a plane through $pp_2$, to $dy$, to a plane through $qq_2$ and back to $by$.

\begin{center}
\resizebox{12cm}{!}{
\begin{tikzpicture}
    
    \def\pentagonradius{3} 
    \def\angle{72}
    \def\rotationP{-90} 
    \def\rotationB{-54} 
    \def\verticalshift{8} 

    % First pentagon with points p_0, p_1, p_2, p_3, p_4
    \begin{scope}[scale=1, yscale=0.6]
        % Draw the pentagon vertices with rotation
        \foreach \i in {0,1,2,3,4} {
            \coordinate (P\i) at (\i*\angle+\rotationP:\pentagonradius);
        }
        \draw[thick] (P0) -- (P1) -- (P2) -- (P3) -- (P4) -- cycle;

        \node at (P0) [circle, fill, inner sep=1pt, label={[label distance=-1mm]below left:{\small \( p_0 \)}}] {};
        \node at (P1) [circle, fill, inner sep=1pt, label={[label distance=-1mm]below:{\small \( p_1 \)}}] {};
        \node at (P2) [circle, fill, inner sep=1pt, label={[label distance=-1mm]above left:{\small \( p_2 \)}}] {};
        \node at (P3) [circle, fill, inner sep=1pt, label={[label distance=-1mm]above left:{\small \( p_3 \)}}] {};
        \node at (P4) [circle, fill, inner sep=1pt, label={[label distance=-1mm]below left:{\small \( p_4 \)}}] {};        

        % Draw lines connecting the vertices
        \draw[thick] (P0) -- (P1);
        \draw[thick] (P1) -- (P2);
        \draw[thick] (P2) -- (P3);
        \draw[thick] (P3) -- (P4);
        \draw[thick] (P4) -- (P0);

        % Draw p
       \coordinate (P) at (0,-5);
       \node at (P) [circle, fill, inner sep=1pt, label={below:{\small \( p \)}}] {};

       % Draw q
       \coordinate (Q) at (0,0);
       \node at (Q) [circle, fill, inner sep=1pt, label={below:{\small \( q \)}}] {};

    \coordinate (Q0) at (0, -2);
    \coordinate (Q1) at (2, -0.67);
    \coordinate (Q2) at (1.32, 2);
    \coordinate (Q3) at (-1.32, 2);
    \coordinate (Q4) at (-2, -0.67);

    % Draw the second pentagon with the coordinates for Q0, Q1, Q2, Q3, Q4
    \draw[thick] (Q0) -- (Q1) -- (Q2) -- (Q3) -- (Q4) -- cycle;

    \node at (Q0) [circle, fill, inner sep=1pt, label={[label distance=-1mm] below:\(q_0\)}] {};
    \node at (Q1) [circle, fill, inner sep=1pt, label={[label distance=-1mm] below:\(q_1\)}] {};
    \node at (Q2) [circle, fill, inner sep=1pt, label={[label distance=-1mm] right:{\small \(q_2\)}}] {};
    \node at (Q3) [circle, fill, inner sep=1pt, label={[label distance=-1mm] left:{\small \(q_3\)}}] {};
    \node at (Q4) [circle, fill, inner sep=1pt, label={[label distance=-1mm] below:{\small \(q_4\)}}] {};
           
    \end{scope}

    % Second pentagon with points b_0, b_1, b_2, b_3, b_4, shifted vertically
    \begin{scope}[scale=1, yscale=0.6, shift={(0,\verticalshift)}]
        % Draw the pentagon vertices with rotation
        \foreach \i in {0,1,2,3,4} {
            \coordinate (B\i) at (\i*\angle+\rotationB:\pentagonradius);
        }
        \draw[thick] (B0) -- (B1) -- (B2) -- (B3) -- (B4) -- cycle;

        \node at (B0) [circle, fill, inner sep=1pt, label={[label distance=0mm]above left:{\small \( b_0 \)}}] {};
        \node at (B1) [circle, fill, inner sep=1pt, label={[label distance=0mm]right:{\small \( b_1 \)}}] {};
        \node at (B2) [circle, fill, inner sep=1pt, label={[label distance=-1mm]above left:{\small \( b_2 \)}}] {};
        \node at (B3) [circle, fill, inner sep=1pt, label={[label distance=0mm]left:{\small \( b_3 \)}}] {};
        \node at (B4) [circle, fill, inner sep=1pt, label={[label distance=-1mm] above right:{\small \( b_4 \)}}] {};    

        % Draw lines connecting the vertices
        \draw[thick] (B0) -- (B1);
        \draw[thick] (B1) -- (B2);
        \draw[thick] (B2) -- (B3);
        \draw[thick] (B3) -- (B4);
        \draw[thick] (B4) -- (B0);

        % Draw b
       \coordinate (B) at (0,5);
       \node at (B) [circle, fill, inner sep=1pt, label={above:{\small \( b \)}}] {};

       % Draw d
       \coordinate (D) at ($ (B) + 0.5*(B1) - 0.5*(B) $); 
       \node at (D) [circle, fill, inner sep=1pt, label={above:{\small \( d \)}}] {};    
     
    \end{scope}

    % Draw lines between p_i and b_i
     \draw[thick, opacity=0.3] (P0) -- (B0) -- (P1) -- (B1) -- (P2) -- (B2) -- (P3) -- (B3) -- (P4) -- (B4) -- (P0);

    % Draw lines between p and p_i
    \draw[thick, opacity=0.3] (P0) -- (P); 
    \draw[thick, opacity=0.3] (P1) -- (P); 
    \draw[thick, opacity=0.3] (P2) -- (P); 
    \draw[thick, opacity=0.3] (P3) -- (P); 
    \draw[thick, opacity=0.3] (P4) -- (P);

    % Draw lines between b and b_i
    \draw[thick, opacity=0.3] (B0) -- (B); 
    \draw[thick, opacity=0.3] (B1) -- (B); 
    \draw[thick, opacity=0.3] (B2) -- (B); 
    \draw[thick, opacity=0.3] (B3) -- (B); 
    \draw[thick, opacity=0.3] (B4) -- (B);
    
    % Draw lines between q and q_i
    \draw[thick, opacity=0.3] (Q0) -- (Q); 
    \draw[thick, opacity=0.3] (Q1) -- (Q); 
    \draw[thick, opacity=0.3] (Q2) -- (Q); 
    \draw[thick, opacity=0.3] (Q3) -- (Q); 
    \draw[thick, opacity=0.3] (Q4) -- (Q);

    % Draw lines between d and b_0 and b_2
    \draw[thick, opacity=0.3] (B0) -- (D) -- (B2);
    
    \coordinate (D4) at (-0.8, 4.0); 
    \node at (D4) [circle, fill, inner sep=1pt, label={[label distance=-1mm]above right:{\small \( d_4 \)}}] {};
    \draw[thick, opacity=0.3] (B0) -- (D4) -- (D);
    
    \coordinate (P4') at ($ (P) + 0.5*(P4) - 0.5*(P) $); 
    \node at (P4') [circle, fill, inner sep=1pt, label={[label distance=-1mm]below left:{\small \( p_{4}' \)}}] {};
    \draw[thick, opacity=0.2] (P0) -- (P4') -- (P3);    

    \draw[thick, opacity=0.1] (P0) -- (D4) -- (P4');    
      
    \coordinate (D3) at (-1.2, 5.4); 
    \node at (D3) [circle, fill, inner sep=1pt, label={[label distance=-1mm]above right:{\small \( d_3 \)}}] {};
    \draw[thick, opacity=0.7] (B2) -- (D3) -- (D4);    
    \draw[thick, opacity=0.7] (B0) -- (D4);   
    \draw[thick, opacity=0.3] (D3) -- (D);    
    \draw[thick, opacity=0.1] (P3) -- (D3) -- (P4');    

    % Define b_3' and b_4'
    
    \coordinate (B3') at (-2,5.2);
    \node at (B3') [circle, fill, inner sep=1pt, label={[label distance=-1mm]left:{\small \( b_{3}' \)}}] {};    
    
    \coordinate (B4') at ($ (B4) + 0.4*(B0) - 0.4*(B4) $); 
    \node at (B4') [circle, fill, inner sep=1pt, label={[label distance=-1.5mm]above right:{\small \( b_{4}' \)}}] {};    
    
    \draw[thick, opacity=0.7] (B4') -- (B3') -- (B2);    
    \draw[thick, opacity=0.3] (B4') -- (B) -- (B3');          

    % Define Q4'
    \coordinate (Q4') at ($ (Q4) + 0.3*(Q) - 0.3*(Q4) $); 
    \node at (Q4') [circle, fill, inner sep=1pt, label={[label distance=-1mm]below left:{\small \( q_{4}' \)}}] {};     

    \draw[thick, opacity=0.2] (Q3) -- (Q4') -- (Q0);            
      
    % Draw D3'
    %\coordinate (D3') at ($ (D3) + 0.2*(D) - 0.2*(D3) $); 
    %\node at (D3') [circle, fill, inner sep=1pt, label={[label distance=-1mm]above right:{\small \( d_{3}' \)}}] {};          
    %\draw[thick, opacity=0.2] (B2) -- (D3') -- (D4);     

    % Lines between q_i and other points         
    \draw[thick, opacity=0.4, color=black] (Q0) -- (B0) -- (Q1) -- (B1) -- (Q2) -- (B2) -- (Q3) -- (B3') -- (Q4') -- (B4') -- (Q0);  
    \draw[thick, opacity=0.4, color=black] (Q0) -- (D4) -- (Q4) -- (D3) -- (Q3);
    
    \coordinate (caption) at (0,-4);
    \node at (caption) [label={below: \Large The established points and lines relevant to determining the images under $\theta$.}] {};    
\end{tikzpicture}
}
\end{center}

The last things we have to show, are that all lines through $b_0$ and $b_1$ in $b^{\perp} \cap p^{\pperp}$ are stabilised under $\theta$. For that, we will first consider an arbitrary plane $\beta$ through $bb_1$. Since $bb_1$ first maps to $\<p, p_1, p_2\>$, $\beta$ will first map to a line in $\<p, p_1, p_2\>$. Since $\proj_{d} (\<p, p_2, p_2\>) = db_1$ and $d \in \beta$, this line will map back to $\beta$. Similarly, $\proj_q (\beta)$ is a line in $\<q, q_1, q_2\>$ that will map back to $\beta$, when we project to $b$. This shows that all lines through $b_1$ in $b^{\perp} \cap p^{\pperp}$ are stabilised under $\theta$.

For the lines through $b_0$, we first consider the projectivity $\psi: p \per b \per q \per d \per p$. We see that $p_1p_2$ is fixed pointwise under $\psi$. Furthermore, a plane $\gamma$ through $pp_1$ maps to a line through $b$ in $\<b, b_0, b_1 \>$ that intersect $b_0b_1$ in some point $g$, to a plane through $qq_1$, to a line through $d$ in $\<b, b_0, b_1 \>$ that intersect $b_0b_1$ in the same point $g$ and back to $\gamma$. Thus, planes through $pp_1$ are stabilised under $\psi$ and in the same way we can see that planes through $pp_2$ are stabilised. 

We can define a projectivity $\theta' := p \per b \per q' \per d' \per p$, where $q'$ is a point on $pp_1$ and $d'$ is definied analogously to $q$ previously, opposite $p$ and $d'$ and such that $(\proj_{p}^{b})^{-1} \circ \theta'  \circ  \proj_{p}^{b}$ moves $b_3'$ to $b_3$. Because of the given symmetry, $\theta'$ will fix $p_0p_1$ and $p_1p_2$ pointwise and stabilise all planes through $pp_1$. 

Now we can consider $\psi': b \per p \per d' \per q' \per b$ and see that $\psi'$ fixes $b_0b_1$ pointwise and stabilises all planes through $bb_0$ and $bb_1$.

We can see that
$$\psi' \circ \theta^{-1} = b \per q \per d \per p \per b \per p \per d' \per q' \per b$$
fixes all planes through $bb_1$ and all points on $b_0b_1$, since both $\psi'$ and $\theta^{-1}$ do that. But then $\psi' \circ \theta^{-1}$ fixes an apartment in $b^{\perp}$ and therefore has to be the identity. Thus, $\theta$ is a root elation.

\end{proof}

\subsection{Non-existence of Thick, Sphercial Buildings of Types $\mathsf{H_3}$ and $\mathsf{H_4}$}\label{nonexist} \
\vspace{4mm}

In 1976, an article by Jacques Tits got published, showing that Moufang $m$-gons only exist for $m \in \{ 2,3,4,6,8 \}$ (see \cite[Théorème 1]{Tits:76}). Another result by Tits -- contained in (\cite[Addenda, page 275]{Tits:74}) -- shows that thick, spherical buildings of type $\mathsf{H_3}$ and $\mathsf{H_4}$ do not exist. To summarise the proof of Tits briefly: Suppose thick, spherical buildings of type $\mathsf{H_3}$ (or $\mathsf{H_4}$) existed. According to \cite[Addenda, page 274]{Tits:74}, these would have to be Moufang. With that, it would follow that the rank $2$ residues of a Moufang spherical building of type $\mathsf{H_3}$, which form generalised pentagons (and which are also contained in $\mathsf{H_4}$), also satisfy the Moufang condition \cite[Addenda, page 274]{Tits:74}. But that is impossible, because no such generalised pentagons exist according to \cite[Théorème 1]{Tits:76}. Since the proof of \cite[Addenda, page 274]{Tits:74} relies on the extension theorem \cite[Theorem 4.1.2]{Tits:74}, which is, as mentioned in the introduction, rather technical, there have been attempts to find different ways to prove the non-existence. 

One different way of proving that there are no thick, spherical buildings of type $\mathsf{H_3}$ and $\mathsf{H_4}$, without using the extension theorem, was found by Hendrik Van Maldeghem in 1995 (see \cite[Section 5]{Mal:95}). Van Maldeghem's proof works by first assuming that thick, spherical buildings of type $\mathsf{H_3}$ exist, showing that the rank $2$ residues that are generalised pentagons are regular and showing that regular generalised pentagons do not exist. 

In this article, we also did not need the extension theorem, only assumed that thick, spherical buildings of type $\mathsf{H_3}$ exist and then gave geometric constructions for nontrivial root elations of generalised pentagons; implying the Moufang condition for them, which then, again, leads to a contradiction using \cite[Théorème 1]{Tits:76}. This has not been done before and is a more direct and less intricate way of proving the non-existence of thick, spherical buildings of type $\mathsf{H_3}$ and $\mathsf{H_4}$, since we do not need the concept of regularity.

 %%%%%%%%%%%%%%%%%%%%%%%%%%%%%%%%%%%%%%%%%%%%%%%%%%%%%%%%%%%%%%%%%%%%%%%%%
%%%%%%%%%%%%%%%%%%%%%%%%%%%%%%%%%%%%%%%%%%%%%%%%%%%%%%%%%%%%%%%%%%%%%%%%%
%%%%%%%%%%%%%%%%%%%%%%%%%%%%%%%%%%%%%%%%%%%%%%%%%%%%%%%%%%%%%%%%%%%%%%%%%
%%%%%%%%%%%%%%%%%%%%%%%%%%%%%%%%%%%%%%%%%%%%%%%%%%%%%%%%%%%%%%%%%%%%%%%%%
%%%%%%%%%%%%%%%%%%%%%%%%%%%%%%%%%%%%%%%%%%%%%%%%%%%%%%%%%%%%%%%%%%%%%%%%%
%%%%%%%%%%%%%%%%%%%%%%%%%%%%%%%%%%%%%%%%%%%%%%%%%%%%%%%%%%%%%%%%%%%%%%%%%
\vspace{12mm}
\textbf{Acknowledgment.} The author is grateful to Prof. Linus Kramer for the idea for this project. Furthermore, the author wants to thank Prof. Linus Kramer and Prof. Hendrik Van Maldeghem for their general advice and support. In particular, the discussions with Prof. Hendrik Van Maldeghem about the subjects of this article have been very insightful.
\vspace{12mm}

\end{document}